\documentclass[10pt]{amsart}
\usepackage{amssymb,amsmath,amsfonts,amscd,mathrsfs, bbm, xcolor}

\usepackage{a4wide}
\usepackage[all,hyperref,numberbysection]{bi-discrete}
\usepackage{upgreek}
\usepackage{microtype}
\numberwithin{equation}{section}

\begin{document}

\author[K. Kim]{Kyeongbae Kim}
\address{Institute for Applied Mathematics, University of Bonn, Endenicher Allee 60, 53115, Bonn, Germany}
\email{kkim@uni-bonn.de}

\author[S. Nowak]{Simon Nowak}
\address{Fakult\"at f\"ur Mathematik, Universit\"at Bielefeld, Postfach 100131, D-33501 Bielefeld, Germany}
\email{simon.nowak@uni-bielefeld.de}

\author[Y. Sire]{Yannick Sire}
\address{\noindent
Department of Mathematics,
Johns Hopkins University, 3400 N. Charles Street, Baltimore, MD 21218, USA}
\email{ysire1@jhu.edu}

\title{Fine regularity of fractional harmonic maps and applications}

\begin{abstract}
In this paper, we derive several regularity results for harmonic mappings into Euclidean spheres associated with rather general energies related to fractional Sobolev spaces. These maps generalize families of maps introduced by Da Lio, Rivi\`ere and Schikorra and are related to harmonic maps with free boundaries. In our context, there is in general no monotonicity formula, which prevents the use of some classical methods.  Despite this limitation, under natural assumptions on a Gagliardo-type energy, we succeed in proving a variety of small energy regularity results and improve on known results, even in the isotropic case for which some monotonicity formula is available. To this end, we exploit recent developments in the regularity theory of nonlocal equations and as a by-product, we explain how these results apply to classes of harmonic maps with free boundary and lead to new potential-theoretic estimates. As another application, we obtain higher differentiability results for the fractional harmonic map heat flow.
\end{abstract}

\maketitle

\tableofcontents

%% ------------------------------------------------------------

\section{Introduction}
In this paper, we derive several regularity results for nonlocal systems of fractional harmonic mappings into Euclidean spheres. We also apply those to new regularity estimates to derive results for harmonic mappings with free boundaries and the Plateau flow introduced by Struwe recently in \cite{plateau}. As a by-product of the fine regularity properties we develop, we obtain some potential-theoretic estimates in terms of the data which are new in our general context but as well as for harmonic maps with free boundary. 

\subsection*{The setup}
We consider maps $u$ from $\Omega \subset \bbR^n$ with $n \geq 1$ into $\bbR^N$ with $N\geq 2 $ as well such that for almost every $x \in \Omega$, $u(x) \in \mathbb S^{N-1}$, the round sphere in $\mathbb R^N$.  Define now the following operator  

\begin{equation*}
	\mathcal{L}_su(x)\coloneqq \,\text{p.v.}\int_{\bbR^n}\frac{u(x)-u(y)}{|x-y|^s}\frac{a(x-y)}{|x-y|^{n+s}}\,dy,
\end{equation*}
where $s \in (0,1)$ and the scalar function $a$ is measurable and satisfies 
\begin{equation}\label{cond.a}
	\Lambda^{-1}\leq a(\xi)=a(-\xi)\leq \Lambda
\end{equation}for some constant $\Lambda\geq1$. In the previous expression, $p.v.$ stands for the principal value in the Cauchy sense. 
Writing 
\begin{equation*}
	d_su(x,y)\coloneqq \frac{u(x)-u(y)}{|x-y|^s}
\end{equation*}
and 
\begin{equation*}
	\mathcal{C}\Omega\coloneqq (\Omega^c\times \Omega^c)^c, 
\end{equation*}
for $p \in (1,\infty)$ consider the following energy 
\begin{align}\label{defn.wicale}
	\widetilde{\mathcal{E}}(u;\Omega)\coloneqq \iint_{\mathcal{C}\Omega}|d_su|^p\frac{\,dx\,dy}{|x-y|^n},
\end{align}
which is defined on the Sobolev space 
\begin{equation*}
	\widehat{W}^{s,p}(\Omega;\bbR^N)\coloneqq\{u\in L^2(\Omega;\bbR^N)\,:\, \widetilde{\mathcal{E}}(u,\Omega)<\infty\}. 
\end{equation*}
Our operator $\mathcal L_s$ appears as a variable coefficient version of the generator of the energy $\widetilde{\mathcal{E}}$ for $p=2$. 

We are interested in the regularity of weak solutions $u:\Omega \to \bbR^N$ such that $|u|=1$ almost everywhere in $\bbR^n$\footnote{Here we note that for any $u=(u^i)_i:\bbR^n\to \bbR^N$,  
	\begin{equation*}
		|u|^2=\sum_{i}^N(u^i)^2.
\end{equation*}}, to the following equation
\begin{equation}\label{eq.mainIntro}
	\mathcal{L}_su=|\mathbf{d}_{s,a}u|^2u+\mu\quad\text{in }\Omega,
\end{equation}
where we denote here 
\begin{equation*}
	|\mathbf{d}_{s,a}u|^2(x)\coloneqq \int_{\bbR^n}\frac{|u(x)-u(y)|^2}{|x-y|^{2s}}\frac{a(x-y)}{|x-y|^n}\,dy
\end{equation*}
and assuming some integrability for the term $\mu.$ In particular, weak solutions here belong to the fractional Sobolev Space 
\begin{equation*}
	\widehat{H}^s_{00}(\Omega;\bbR^N)\coloneqq\{u\in \widehat{H}^s(\bbR^n;\bbR^N)\,:\,u\equiv 0\text{ on }\bbR^n\setminus \Omega\}.
\end{equation*}

The equation \eqref{eq.mainIntro} is a harmonic map type equation which generalizes the so-called fractional harmonic maps that we now describe more in details. In the seminal papers \cite{DLR1,DLR2}, Da Lio and Rivi\`ere introduced the concept of half-harmonic maps, whose definition follows. 

\begin{definition}\label{defHalfHarm}
	A map $u : \mathbb{R}^n \to \mathbb S^{N-1}$ belonging to $H^{1/2}(\mathbb R^n, \mathbb S^{N-1})$
	is called a \emph{half-harmonic map} if it is a critical point of the fractional energy functional
	\[
	E_{1/2}(u) \;=\; \frac{1}{2} \int_{\mathbb{R}^n} \int_{\mathbb{R}^n} 
	\frac{|u(x) - u(y)|^2}{|x-y|^{n+1}} \, dx \, dy .
	\]
	
	Equivalently, $u$ satisfies in the distributional sense, the Euler–Lagrange equation
	\[
	(-\Delta)^{1/2} u(x) \;\perp\; T_{u(x)}\mathbb S^{N-1} ,
	\]
	where $(-\Delta)^{1/2}$ is the Fourier multiplier of symbol $|\xi|$, and $T_{u(x)}\mathbb S^{N-1}$ is the tangent space to $\mathbb S^{N-1}$ at $u(x)$.
\end{definition}

The terminology ``half-harmonic'' comes from a dimensional reduction principle.  Indeed, the following equivalence holds between the Dirichlet energy of a function $U$ in the half-space and the $H^{1/2}$-energy of its trace $u$ on $\partial \mathbb R^{n+1}_+$
\[
\int_{\mathbb{R}^{n+1}_+} |\nabla U|^2 \, dx\,dt
\quad \longleftrightarrow \quad
\int_{\mathbb{R}^n} \int_{\mathbb{R}^n} \frac{|u(x) - u(y)|^2}{|x-y|^{n+1}} \, dx \, dy ,
\]
in the sense that any harmonic function $U$ such that $\int_{\mathbb{R}^{n+1}_+} |\nabla U|^2 \, dx\,dt$ is finite has a trace $u$ such that $ \int_{\mathbb{R}^n} \int_{\mathbb{R}^n} \frac{|u(x) - u(y)|^2}{|x-y|^{n+1}} \, dx \, dy < \infty$, and equivalently any function $u$ with finite $H^{1/2}$-energy in $\mathbb R^n$ can be extended harmonically to the the upper half-space to a function $U$ with finite Dirichlet energy. This principle applies as well to maps targeted into spheres. In \cite{millotSire}, Millot and the third author proved that any weak half-harmonic map satisfies in the weak sense the following equation

\begin{equation*}
	(-\Delta)^{1/2}u=|\mathbf{d}_{1/2,1}u|^2u\quad\text{in }\Omega. 
\end{equation*}

The equation \eqref{eq.mainIntro} is then a generalization of the previous equation, allowing variable kernels, other possible orders of differentiation and non-homogeneous right-hand sides. As far as the order of the equation is concerned, the papers \cite{MilPegSch21,Millot-Sire-Yu} deal with regularity results as well for operators like $(-\Delta)^s$, i.e.\ multipliers of symbol $|\xi|^{2s}$ for $s \in (0,1)$. For nonlinear operators in the principal part of the equation, we refer to the papers of Da Lio and Schikorra \cite{DLS1,DLS2,armin1}. 
Half-harmonic maps have also an important geometric interpretation. As in the case of classical harmonic maps, they parametrize (possibly branched) minimal immersions with free boundary (see \cite{millotSire} and references therein for such a connection); they also enjoy conformal properties. Indeed, exploiting the extension principle mentioned above, the energy in Definition \ref{defHalfHarm} is conformally invariant for $n=1$ in the sense that it is invariant under any Moebius transformation of $\mathbb R^2_+$ which leaves invariant the boundary $\bbR$. Hence, it is important to remark that half-harmonic maps from the real line into spheres (or more generally any closed manifolds) should enjoy full regularity and should be in particular H\"older continuous everywhere on the domain. This has been indeed proved in \cite{DLR1}. As a matter of fact, in our analysis, equation \eqref{eq.mainIntro} is {\sl critical} only for $n=1$ and $s=1/2$ and {\sl supercritical} otherwise. In this latter case, the only hope in general is to obtain {\sl partial regularity} of weak solutions. Classical harmonic maps are instrumental in many areas of mathematics and giving an account of the theory is beyond the scope of this paper. We refer the reader to the book by Lin and Wang \cite{MR2431658} for a recent detailed account. In the following sections, we will also refer to some classical results and references, for which our results can be seen as a generalization.

\subsection*{The results}
This section is devoted to the main results of this paper. Later, we will also draw some consequences for more classical objects like harmonic maps with free boundary and the Plateau flow of Struwe. We would like to emphasize the three following points:
\begin{itemize}
	\item We are interested in considering {\sl approximate} fractional harmonic maps (i.e. $\mu$ not identically zero) for compactness purposes but to also obtain results for time-dependent problems. 
	\item There is a dichotomy between critical equations (i.e. $n=1,s=1/2$) and supercritical ones. In particular, all the smallness assumptions disappear in the critical case, providing new results in our variable setting and new proofs of existing results. In the supercritical setup, it is not possible in general to remove the smallness assumption. 
	\item A very important feature of the classical harmonic map system is the monotonicity of the rescaled energy $\frac{1}{r^{n-2}}\int_{B_r} |\nabla u|^2$ in terms of the radius $r>0$. Our energies are never monotone at the fractional level and only enjoy some type of monotonicity in the special case of $\mathcal L_s =(-\Delta)^s$ thanks to the Caffarelli-Silvestre extension theorem. Monotonicity implies in particular two important properties: If one controls one scale, then one controls also every smaller scale; blow-up limits exist and a dimension reduction a la Federer can be used to prove strong Hausdorff dimension bounds on the singular sets of partially regular maps. Except in the critical case (where monotonicity is actually not needed), our results are then weaker in such a general framework than the ones available in the literature for similar equations (e.g. \cite{MilPegSch21}). Nevertheless, our standing assumptions are quite general and natural in this setting.  
\end{itemize}

We define
\begin{equation*}
	\widehat{W}^{s,p}_{00}(\Omega;\bbR^N)\coloneqq\{u\in \widehat{W}^{s,p}(\bbR^n;\bbR^N)\,:\,u\equiv 0\text{ on }\bbR^n\setminus \Omega\}.
\end{equation*}

As is customary by now in the regularity theory of nonlocal equations, quantitative estimates a la Campanato are at the cornerstone of the proofs of regularity. %We state below a potential theoretic excess decay which is paramount to our various estimates. 

We would like also to emphasize that our methods are so general and flexible that in some cases, we can also treat in a very unified way a quasilinear version of our maps. More precisely, we will consider operators of so-called fractional $p$-Laplacian type. These are of the form
\begin{equation*}
	\mathcal{L}_{s,p}u(x)\coloneqq \,\text{p.v.}\int_{\bbR^n}|d_su|^{p-2}d_s u\frac{a(x,y)}{|x-y|^{n+s}}\,dy,
\end{equation*}
where $p \in (1,\infty)$ and $a$ is measurable, symmetric in $x$ and $y$, and satisfies $\Lambda^{-1} \leq a(x,y) \leq \Lambda$ for some $\Lambda \geq 1$. In the classical case, $p$-harmonic maps have been investigated in a large amount of works, see e.g. \cite{Hardt-Lin87}. In this case, the criticality holds whenever the index $p$ equals the dimension. The maps we are considering have been introduced by Da Lio and Schikorra in particular and some regularity results have been obtained in \cite{armin1} and \cite{DLS1}.

We now give a precise notion of a weak solution.
\begin{definition}
	Let $s \in (0,1)$, $p \in (1,\infty)$ and let us assume $\mu\in L^{p_*}(\Omega;\bbR^N)$, where $p_*=\max\{np/(n(p-1)+sp),1\}$. We say that $u$ is a weak solution to 
	\begin{equation}\label{eq.main}
		\mathcal{L}_{s,p}u=|\mathbf{d}_{s,a}u|^pu+\mu\quad\text{in }\Omega,
	\end{equation}
	if $u\in \widehat{W}^{s,p}(\Omega;\bbR^N)$ with $|u|\equiv 1$ on $\bbR^n$ satisfies
	\begin{align*}
		\iint_{\mathcal{C}\Omega}|d_su|^{p-2}d_sud_s\phi\frac{a(x,y)}{|x-y|^n}\,dx\,dy&=\int_{\Omega}\left(\int_{\bbR^n}|d_su|^p \frac{a(x,y)}{|x-y|^{n}}\,dy\right)u(x)\phi(x)\,dx\\
		&\quad+\int_{\Omega}\mu\phi
	\end{align*}
	for any $\phi\in \widehat{W}^{s,p}_{00}(\Omega)$, where
	\begin{equation*}
		|\mathbf{d}_{s,a}u|^p(x)\coloneqq \int_{\bbR^n}\frac{|u(x)-u(y)|^p}{|x-y|^{sp}}\frac{a(x,y)}{|x-y|^n}\,dy.
	\end{equation*}
\end{definition}

We now introduce our main result.
\subsection{Main results in the critical case} We introduce regularity results for the critical case $n=sp$. First, we provide the following H\"older regularity result.
\begin{theorem}\label{thm0}
	Under the above assumptions, let $u$ be a weak solution to 
	\begin{equation*}
		\mathcal{L}_{s,p}u=|\mathbf{d}_{s,a}u|^pu+\mu\quad\text{in }\Omega
	\end{equation*}
	with $p=\frac{n}{s}$. Suppose $B_{2R}(x_0)\Subset \Omega$. Then there is a small constant $\delta=\delta(n,s,N,\Lambda)$ such that if $\mu\in L^{{\gamma}}(B_{2R}(x_0))$ with $\frac{1}{p-1}\left(sp-\frac{n}\gamma\right)<\delta$, we have $u\in C^{\frac{1}{p-1}\left(sp-\frac{n}\gamma\right)}(B_R(x_0);\bbR^N)$.
\end{theorem}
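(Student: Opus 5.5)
We argue by a Campanato-type excess decay combined with comparison to the fractional $p$-harmonic system. In the critical case $sp=n$ the energy $\widetilde{\mathcal E}(u;B_r)$ is scale invariant. The first step is to make the local energy small, and here criticality does the work. By absolute continuity of the integral,
\[
\iint_{B_r\times B_r}|d_su|^p\frac{dx\,dy}{|x-y|^n}\to0 \quad\text{as } r\to0,
\]
uniformly for centers in $B_R(x_0)$. The tail of $u$ is controlled trivially since $|u|\equiv1$. So for every $\varepsilon>0$ there is $r_0>0$ such that every ball $B_{r}(z)$ with $z\in B_R(x_0)$ and $r\le r_0$ has normalized energy below $\varepsilon$. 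This replaces the smallness hypothesis that is unavoidable in the supercritical case.

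The second step is the structure of the nonlinearity. Because $|u|=1$, we have the pointwise identity $u\cdot(u(x)-u(y))=\tfrac12|u(x)-u(y)|^2$. This lets us rewrite $|\mathbf d_{s,a}u|^pu$ in a form that yields cancellations when tested against $u-(u)_{B_r}$. We rewrite it in the Da Lio--Schikorra/Rivière fashion as an antisymmetric potential term $\Omega\cdot d_su$ plus lower order terms, with $\Omega_{ij}\sim |d_su|^{p-2}(u^i d_su^j-u^jd_su^i)$. The point is that this term can be estimated against the test function with a gain coming from the energy smallness. Concretely, on $B_r(z)$ we compare $u$ with the solution $v$ of the homogeneous problem $\mathcal L_{s,p}v=0$ in $B_r(z)$, $v=u$ outside. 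We test both equations with $u-v$ and use the $p$-monotonicity of the operator. This gives
\[
\Xint-_{B_r}|u-v|^p\lesssim \varepsilon^{\theta}\,E(u;B_{2r})+\Big(r^{\,sp-\frac n\gamma}\|\mu\|_{L^\gamma(B_{2r})}\Big)^{\frac p{p-1}}
\]
for some $\theta>0$. For $p<2$ the monotonicity inequality has to be handled with the usual degenerate/singular splitting.

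The third step is the excess decay. We invoke the known $C^{\alpha_0}$ regularity of fractional $p$-harmonic functions (De Giorgi--Nash--Moser/Kassmann–Di Castro–Kuusi–Palatucci type, with tails). This gives, for $\tau\in(0,1)$,
\[
\Xint-_{B_{\tau r}}|v-(v)_{\tau r}|^p\le C\tau^{\alpha_0 p}\Big(\Xint-_{B_r}|v-(v)_r|^p+\mathrm{Tail}^p\Big).
\]
Combining this with the comparison estimate, we take $\delta<\alpha_0$, choose $\tau$ small, and then choose $\varepsilon$ (hence $r_0$) small. Iterating the resulting inequality gives
\[
\Xint-_{B_\rho(z)}|u-(u)_\rho|^p\lesssim \rho^{\beta p}\quad\text{with }\beta=\tfrac1{p-1}\big(sp-\tfrac n\gamma\big).
\]
The exponent $\beta$ is exactly the one produced by the $\mu$-term above, and it is attainable because $\beta<\delta\le\alpha_0$. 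The tail terms are handled as in the standard nonlocal iteration: we sum the excesses over dyadic annuli, and convergence of this sum uses $\beta<sp/(p-1)$. Campanato's characterization then gives $u\in C^{\beta}(B_R(x_0))$.

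The main obstacle is the second step, namely obtaining a genuinely small factor from the nonlinearity $|\mathbf d_{s,a}u|^pu$. This term is only in $L^1$, and without extra structure the comparison gives no decay. We would first try the orthogonality trick: decompose the test function $u-v$ into its components parallel and tangent to $u$. The parallel component is quadratically small by the identity above, and the tangential component is controlled by the antisymmetric structure via a fractional div–curl/commutator estimate. A fallback is a direct reverse-Hölder/Gehring improvement of $d_su$, which makes the right-hand side slightly better than $L^1$ and lets one absorb it with the factor $\varepsilon^\theta$.
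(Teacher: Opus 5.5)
Your outer architecture matches the paper: smallness at small scales from criticality, comparison with the $\mathcal{L}_{s,p}$-harmonic replacement $v$, $C^{\alpha}$ decay of $v$, iteration, and Campanato. But the step you flag as the main obstacle is a genuine gap. The comparison estimate with the factor $\varepsilon^{\theta}$ is only asserted, and neither route you propose yields it.

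\emph{The orthogonality trick is misdirected.} Because the nonlinearity is parallel to $u$, testing with $w=u-v$ only sees $\int|\mathbf{d}_{s,a}u|^p\,(u\cdot w)$. The tangential part of $w$ does not meet the nonlinearity at all, so that is not where a div--curl estimate can help. Meanwhile $u\cdot(u-v)=\frac12|u-v|^2+\frac12(1-|v|^2)$ has no pointwise smallness, and pointwise smallness is exactly what pairing with the merely integrable $|\mathbf{d}_{s,a}u|^p$ would require. The identity $u(x)\cdot(u(x)-u(y))=\frac12|u(x)-u(y)|^2$ must instead be applied to the nonlinearity itself. This gives $\mathcal{L}_{s,p}u^i=\sum_j\mathbf{\Omega}^{i,j}\odot d_su^j+T^i+\mu^i$. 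Moreover, $T^i$ is not lower order: it has the same critical scaling. The paper makes it small only through the Gagliardo--Nirenberg inequality with BMO (Lemma \ref{lem.gagnir}), namely $[u]^{p+1}_{W^{s(p-1)/(p+1),\,p(p+1)/(p-1)}}\le c\,[u]^{2}_{BMO}[u]^{p-1}_{W^{s,p}}$.

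\emph{The Gehring fallback is circular.} A Caccioppoli inequality needs a bound on $\int|\mathbf{d}_{s,a}u|^p\,u\cdot(u-(u)_B)\eta^p$. In the critical regime $u-(u)_B$ is small only in BMO, which is the same difficulty. Without the compensation structure there is no such bound; already in the local two-dimensional case, quadratic-growth systems lacking this structure have discontinuous solutions (Frehse's example).

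The second missing idea is specific to $\mu\neq0$. The antisymmetric potential is no longer divergence free: $-\divergence_s\mathbf{\Omega}^{i,j}=u^i\mu^j-u^j\mu^i$. Hence the Mazowiecka--Schikorra div--curl estimate does not apply as is. The paper proceeds as follows:
\begin{itemize}
\item It replaces $d_su^j$ by $d_s\bar u^j$, where $\bar u^j=(u^j-(u^j)_{B_1})\psi$.
\item Using $d_s(fg)(x,y)=d_sf(x,y)g(x)+f(y)d_sg(x,y)$ and the conservation law, it splits the main term into $\int(u^i\mu^j-u^j\mu^i)\bar u^jw^i$ and $-\int(\mathbf{\Omega}^{i,j}\odot d_sw^i)\bar u^j$.
\item It bounds the latter by a div--curl lemma with right-hand side (Lemmas \ref{lem.h1norm} and \ref{lem.h1norm2}). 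That lemma is proved by subtracting $d_s\tilde v$, where $(-\Delta)^s\tilde v=u^i\mu^j-u^j\mu^i$, so the remainder is divergence free. This step needs the $W^{s+\frac{n\varsigma}{p'(p'+\varsigma)},\,p'+\varsigma}$ bounds for $L^{p_0}$ data from Lemma \ref{lem.high.diff.l1} and Remark \ref{rmk.sola}.
\item The small factor is then $[\bar u^j]_{BMO}\lesssim\epsilon$, from Lemma \ref{lem.locbmo}.
\end{itemize}
Finally, the terms in which one variable lies outside the ball produce the factor $\epsilon^{q-p+1}$ in Lemma \ref{lem.decayu} only if the tail part of $\mathcal{E}$ is also small. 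This requires the dyadic splitting used in the paper's proof, since $|u|\equiv1$ by itself only bounds the tail. Without these ingredients, the decay, iteration and Campanato part of your plan has no input.
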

\begin{remark}
	We note from Lemma \ref{lem.fracdecay} that any weak solution of homogeneous fractional p-Laplace type systems with $p=\frac{n}{s}$ is $\alpha$-order H\"older continuous for some constant $\alpha=\alpha(n,s,\Lambda,N)\in(0,1)$. Indeed, $\delta$ in Theorem \ref{thm0} is given by $\delta=\min\left\{\alpha,\frac{sp}{p-1}\right\}$.
\end{remark}
In the linear case when $p=2$, we obtain the following higher regularity results.
\begin{theorem}\label{cor2}
	Let $n=1$ and let $u$ be a weak solution to 
	\begin{equation*}
		\mathcal{L}_{1/2} u=|\mathbf{d}_{1/2,a}u|^2u+\mu\quad\text{in }\Omega,
	\end{equation*}
	with $a(x,y)=a(x-y)$ satisfying \eqref{cond.a}. If $\mu\in L^{\gamma}(B_{2R}(x_0);\bbR^N)$ with $\gamma \in (1,\infty)$, then $u\in C^{1-\frac1\gamma}(B_R(x_0);\bbR^N)$. In addition, if $a=1$ and $\mu \in L^2(B_{2R}(x_0))$, then $u\in W^{1,2}(B_R(x_0);\bbR^N)$.
\end{theorem}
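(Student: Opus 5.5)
The first assertion comes from a bootstrap that starts from Theorem \ref{thm0}. With $n=1$, $s=1/2$, $p=2$ we have $sp=n$, and Theorem \ref{thm0} gives $u\in C^{\beta}(B_{3R/2}(x_0))$ for every small $\beta$. The point of the argument is that with $p=2$ and a translation invariant kernel $a(x-y)$, the nonlinearity has a structure that allows one to go beyond the threshold $\delta$.

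Following Millot--Sire, I would use $|u|=1$ to write
\[
|\mathbf{d}_{1/2,a}u|^2u=\int\frac{|u(x)-u(y)|^2}{|x-y|^{2}}a(x-y)\,dy\,u(x).
\]
Since $u\cdot(u(x)-u(y))=\tfrac12|u(x)-u(y)|^2$, this term can be recast in the Rivière/Da Lio--Rivière antisymmetric form
\[
\Omega_{ij}(x,y)=u^i(x)\,(u^j(x)-u^j(y))-u^j(x)\,(u^i(x)-u^i(y)),
\]
plus a remainder controlled by $|d_s u|^2$ times the oscillation. This form gives a gain. If $u\in C^\beta$, then $|\mathbf{d}_{1/2,a}u|^2$ behaves locally like an $L^q$ density with improved exponent. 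More precisely, a Campanato-type excess decay gives
\[
\int_{B_r}|\mathbf{d}_{1/2,a}u|^2\lesssim r^{2\beta}
\]
through a Caccioppoli inequality, so this term lies in a Morrey space $L^{1,2\beta}$.

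Next I would compare $u$ on each ball $B_r(z)$ with the $\mathcal L_{1/2}$-harmonic replacement $v$, using the excess decay of Lemma \ref{lem.fracdecay}. The decay is in fact better in the linear translation invariant case: solutions are $C^{\alpha}$ for every $\alpha<1$, or at least for all $\alpha<2s=1$, by differentiating in tangential directions / difference quotients, since $a=a(x-y)$. The comparison estimate has the form
\[
\mathrm{osc}\lesssim (\rho/r)^{\alpha}\,\mathrm{osc}+r^{1-1/\gamma}\|\mu\|_{L^\gamma}+\bigl(\text{Morrey norm of } |\mathbf d u|^2\bigr).
\]
Iterating it raises the Hölder exponent from $\beta$ to $\min(2\beta,\,1-1/\gamma)$, and finitely many steps reach $1-1/\gamma$. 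The main obstacle is exactly this gain step. A naive estimate only shows that the term $|\mathbf{d}u|^2u$ has the same scaling as $\mu\in L^1$, and so yields nothing beyond $\delta$. I would first try the sharp Morrey/Riesz-potential estimate: $|\mathbf d u|^2\in L^{1,2\beta}$ gives a contribution of order $r^{2\beta}$ to the excess. I would fall back on the commutator structure if the tails do not close.

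For $a=1$ and $\mu\in L^2$, I would use the Caffarelli--Silvestre extension $U$ to $\mathbb R^2_+$. There $(-\Delta)^{1/2}u=|\mathbf d u|^2u+\mu$ becomes a harmonic map with free boundary type Neumann problem. By the first part, $u\in C^{1/2}$, and more precisely $|\mathbf du|^2\in L^{1,1-\epsilon}$. I would then run a difference-quotient (Nirenberg) argument in the tangential direction, testing with $\Delta_{-h}(\eta^2\Delta_h u)$. The term $\mu$ is handled by $L^2\times H^{1/2}$ duality. The nonlinear term is handled by writing $|\mathbf du|^2u\cdot\Delta_{-h}\Delta_h u$ and using $u\cdot\Delta_h u=-\tfrac12|\Delta_h u|^2$, which absorbs the bad part once the Morrey smallness on small balls holds. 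The result is a uniform bound on $\|\Delta_h u\|_{H^{1/2}}/|h|^{1/2}$, that is, $u\in H^{1}=W^{1,2}(B_R)$.
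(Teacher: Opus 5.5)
Your H\"older argument is a valid route, but a different one from the paper's. The paper never bootstraps from Theorem~\ref{thm0}. It observes that in the critical case the smallness hypothesis holds automatically on small balls (see \eqref{ineq.nspii}). It then reruns the compensated decay estimate of Lemma~\ref{lem.decayu} with the linear translation-invariant decay of Lemma~\ref{lem.potsgra}, which allows any exponent below $\min\{2s,1\}=1$, exactly as in the proof of Theorem~\ref{thm1}. For $\gamma\le 2$ this gives $C^{1-1/\gamma}$ at once. For $\gamma>2$ it gives an exponent above $1/2$, after which Lemma~\ref{lem.dsuan} makes the nonlinearity bounded and linear theory finishes.

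You instead use compensation only to obtain initial continuity, and then run the classical Morrey bootstrap. This is sound, with two caveats. First, the bound $\int_{B_r}|\mathbf{d}_{1/2,a}u|^2\lesssim r^{2\beta}$ must really come from a Caccioppoli inequality: for $\beta\le 1/2$, H\"older continuity alone does not control the local $H^{1/2}$ energy. In that inequality, $|u|=1$ gives $u\cdot(u-\bar u)=\tfrac12|u-\bar u|^2+\tfrac12(1-|\bar u|^2)=O(r^{2\beta})$, which handles the nonlinear term. Second, the antisymmetric rewriting plays no role in the bootstrap. Your route isolates where compensation is needed; the paper's is shorter given its machinery.

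The $W^{1,2}$ part has a genuine gap at its final step. A uniform bound $\sup_h|h|^{-1/2}\|\Delta_h(\eta u)\|_{H^{1/2}}<\infty$ characterizes the Nikol'skii--Besov space $B^{1}_{2,\infty}$, not $H^1$. On the Fourier side, $|h|^{-1}\|\Delta_h g\|^2_{\dot H^{1/2}}\simeq\int|\xi|\,|h|^{-1}\sin^2(h\xi/2)\,|\hat g|^2\,d\xi$. Since $|h|^{-1}\sin^2(h\xi/2)\to 0$ pointwise, Fatou gives nothing. The supremum over $h$ only controls $\sup_k 2^{2k}\int_{|\xi|\sim 2^k}|\hat g|^2$. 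Choosing $\int_{|\xi|\sim 2^k}|\hat g|^2=2^{-2k}$ gives a function in $B^1_{2,\infty}\setminus H^1$.

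This is not caused by your nonlinear term. Even for the linear problem $(-\Delta)^{1/2}u=\mu\in L^2$, testing with $\Delta_{-h}(\eta^2\Delta_h u)$ yields exactly $|h|^{1/2}$ and nothing better. Reaching the endpoint requires an $L^2$ Calder\'on--Zygmund (Plancherel) estimate for $(-\Delta)^{1/2}$ applied to a localization of $u$, as in Lemma~\ref{lem.2sr}. That in turn needs the whole right-hand side in $L^2_{\mathrm{loc}}$, in particular $|\mathbf{d}_{1/2}u|^2\in L^2_{\mathrm{loc}}$. This does not follow from $u\in C^{1/2}$ or from your Morrey bound. The paper gets it from the Sobolev-scale estimate $u\in W^{\sigma,2/(2\sigma-1)}$ with $\sigma\in(1/2,3/4)$, which is the fractional-maximal-function bound behind Theorem~\ref{thm2}. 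This gives $u\in W^{1/2+\delta,4}_{\mathrm{loc}}$, hence $|\mathbf{d}_{1/2}u|^2\in L^2_{\mathrm{loc}}$, and Lemma~\ref{lem.2sr} then applies.

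Your difference-quotient estimate can be salvaged the same way. In one dimension $B^1_{2,\infty}\hookrightarrow W^{3/4-\varepsilon,4}_{\mathrm{loc}}$, which already gives $|\mathbf{d}_{1/2}u|^2\in L^2_{\mathrm{loc}}$. Lemma~\ref{lem.2sr} then yields $u\in W^{1,2}$.
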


We note that Theorem \ref{cor2} is new already for $a \equiv \textnormal{const}$, that is, in the case of approximate half-harmonic maps.

Our methods actually allow to obtain fine potential-theoretic estimates in terms of fractional maximal functions (see \eqref{maxfun} for their precise definition), as the following result shows. These estimates are in the spirit of nonlinear potential theory (see \cite{KuMiU,KuMiG}) and imply Theorem \ref{thm0} in a rather straightforward fashion. 

\begin{theorem}\label{pot_intro}
	Let $p=\frac{n}{s}$ and $\gamma>1$. Fix $\vartheta< \min\left\{\alpha,\frac{sp}{p-1}\right\}$, where the constant $\alpha=\alpha(n,s,\Lambda,N)$ is determined in Lemma \ref{lem.fracdecay}.
	Let $u\in \widehat{W}^{s,p}(B_{2R}(x_0);\bbR^N)$ with $|u|\equiv 1$ on $\bbR^n$ be a weak solution to 
	\begin{equation}\label{goal.decayu2}
		\mathcal{L}_{s,p}u=|\mathbf{d}_{s,a}u|^pu+\mu\quad\text{in }B_{R}(x_0).
	\end{equation}
	Then there is a sufficiently small radius $R_0=R_0(n,s,\Lambda,N,\vartheta,\gamma)$ such that for all $R \leq R_0$, we have
	\begin{align*}
		\sup_{0<r\leq R}r^{-\vartheta}\dashint_{B_r(x_0)}|u-(u)_{B_{r}(x_0)}|\,dx\leq c\left(\epsilon/R^{\vartheta}+M_{\gamma(sp-\vartheta(p-1)),R}(|\mu|^{\gamma})(x_0)^{\frac{1}{\gamma(p-1)}}\right)
	\end{align*}
	for some constant $c=c(n,s,\Lambda,N,\vartheta,\gamma)$.
\end{theorem}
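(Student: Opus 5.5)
The plan is a Campanato-type excess decay obtained by comparison with homogeneous fractional $p$-Laplace systems. We then iterate it over dyadic radii, producing the fractional maximal function in the usual nonlinear-potential fashion.

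\textbf{Step 1: comparison estimate.} Fix a ball $B_r=B_r(x_0)$ with $r\le R$. Let $v$ solve the homogeneous system $\mathcal{L}_{s,p}v=0$ in $B_r$ with $v=u$ on $\bbR^n\setminus B_r$. Test the difference of the weak formulations with $\phi=u-v\in\widehat{W}^{s,p}_{00}(B_r)$. The source term $\int_{B_r}\mu\cdot(u-v)$ is handled by the fractional Sobolev embedding combined with Hölder in $L^\gamma$. The nonlinear term $\int |\mathbf{d}_{s,a}u|^p u\cdot(u-v)$ is where the critical structure $sp=n$ and $|u|=1$ enter. Since $|u|=1$, we have $u\cdot(u(x)-u(y))=\tfrac12|u(x)-u(y)|^2$, and $v$ is bounded by $1$ by a maximum principle (or we truncate). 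This lets us bound the term by $\widetilde{\mathcal{E}}(u;B_{2r})$-smallness times the comparison energy, in the spirit of the Rivière/Da Lio–Schikorra structure. Alternatively, the bound is in terms of a tail/oscillation quantity: $|u-v|\le 2$, and $\int_{B_r}|\mathbf{d}_{s,a}u|^p$ is scale invariant when $sp=n$. The outcome should be an estimate of the form
\[
\dashint_{B_r}|u-v|\,dx\le c\,\epsilon(r)\,E(u;B_{2r}) + c\left(r^{\gamma sp-n}\int_{B_r}|\mu|^\gamma\right)^{\frac{1}{\gamma(p-1)}},
\]
where $E$ denotes an excess including a nonlocal tail. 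Here $\epsilon(r)$ is small once $R_0$ is small, by absolute continuity of the (scale-invariant) energy $\iint_{\mathcal{C}B_{2r}}|d_su|^p\,dx\,dy/|x-y|^n$. I take the $\epsilon$ in the statement to be this energy smallness/initial excess at scale $R$. The $p<2$ case needs the usual algebraic modification of the monotonicity inequality.

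\textbf{Step 2: decay for $v$ and one-step excess decay.} By Lemma \ref{lem.fracdecay}, $v$ is $C^\alpha$ with decay $E(v;B_{\tau r})\le c\tau^{\alpha}E(v;B_r)$, including tail terms. Combining this with Step 1 gives
\[
E(u;B_{\tau r})\le c_0\bigl(\tau^{\alpha}+\tau^{-n}\epsilon(R_0)\bigr)E(u;B_r)+c\tau^{-n}\left(r^{\gamma sp-n}\!\int_{B_r}|\mu|^\gamma\right)^{\frac{1}{\gamma(p-1)}}.
\]
Since $\vartheta<\alpha$, we choose $\tau$ so that $c_0\tau^\alpha\le\tfrac14\tau^{\vartheta}$, and then $R_0$ so that the $\epsilon$ term is also below $\tfrac14\tau^\vartheta$.

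\textbf{Step 3: iteration.} Set $r_j=\tau^jR$ and sum the inequality. Dividing by $r_j^{\vartheta}$, the datum terms become
\[
r_j^{-\vartheta}\left(r_j^{\gamma sp-n}\int_{B_{r_j}}|\mu|^\gamma\right)^{\frac{1}{\gamma(p-1)}}=\left(r_j^{\gamma(sp-\vartheta(p-1))-n}\int_{B_{r_j}}|\mu|^\gamma\right)^{\frac{1}{\gamma(p-1)}}\le M_{\gamma(sp-\vartheta(p-1)),R}(|\mu|^\gamma)(x_0)^{\frac{1}{\gamma(p-1)}}.
\]
The geometric factor $\tau^{\vartheta}$ makes the discrete sum converge. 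The condition $\vartheta<sp/(p-1)$ ensures the maximal order $\gamma(sp-\vartheta(p-1))$ is positive. Intermediate radii are handled by comparability, and the tail terms are reabsorbed in the standard way using the decay of $\tau^{j\vartheta}$, since $\vartheta<sp/(p-1)$ keeps the tail series summable.

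\textbf{Main obstacle.} The main obstacle is Step 1: controlling the critical nonlinearity $|\mathbf{d}_{s,a}u|^pu$ against $u-v$ without any monotonicity formula. I would first try exploiting $|u|=1$ to rewrite $u\cdot(u-v)$ through $1-u\cdot v$ and $|u-v|^2$. The goal is to gain the factor of scale-invariant energy smallness on $B_{2r}$, which for $sp=n$ is small for small $r$ uniformly in scale by absolute continuity.
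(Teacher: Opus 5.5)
There is a genuine gap in your Step 1, exactly where you flag the main obstacle: nothing in the proposal produces a \emph{small multiplicative} factor in front of the excess. Testing with $w=u-v$ gives $\widetilde{\mathcal{E}}(w;B_r)\lesssim\int_{B_r}|\mathbf{d}_{s,a}u|^p\,u\cdot w\,dx+\int_{B_r}\mu\cdot w\,dx$. Here $|\mathbf{d}_{s,a}u|^p$ is only in $L^1$, and $w\in\widehat{W}^{s,p}_{00}(B_r)$ with $sp=n$ is only of BMO type, not small in $L^\infty$. Using $|w|\le 2$ (even granting $|v|\le1$, which is not automatic for the vector-valued nonlocal system) gives only $\widetilde{\mathcal{E}}(w;B_r)\lesssim\widetilde{\mathcal{E}}(u;B_r)$, an error of the same order as the excess. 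The identities $u(x)\cdot(u(x)-u(y))=\frac12|u(x)-u(y)|^2$ and $1-u\cdot v=\frac12|u-v|^2+\frac12(1-|v|^2)$ do not change this, because the term stays nonnegative and of size energy times $O(1)$. Absolute continuity of the scale-invariant energy does not help either. The bound is linear in the energy with an $O(1)$ constant, and replacing the energy by $\epsilon$ only produces an additive term that does not decay in $r$. So the absorption $c_0\tau^{-n}\epsilon(R_0)\le\frac14\tau^{\vartheta}$ in Step 2 has no basis. This is the usual obstruction for critical growth: a right-hand side that is merely $O(|\mathbf{d}_{s,a}u|^p)$ cannot be handled by size estimates alone. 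You must use the algebraic structure coming from $|u|=1$. Your reference to Rivi\`ere/Da Lio--Schikorra points in that direction, but it is not carried out.

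That structure is what the paper uses. One rewrites $\mathcal{L}_{s,p}u^i=\sum_j\mathbf{\Omega}^{i,j}\odot d_su^j+T^i+\mu^i$, where $\mathbf{\Omega}^{i,j}=|d_su|^{p-2}\bigl(d_su^j\,u^i(x)-d_su^i\,u^j(x)\bigr)a$ is antisymmetric and satisfies $-\mathrm{div}_s\mathbf{\Omega}^{i,j}=u^i\mu^j-u^j\mu^i$. After the Leibniz rule for $d_s$, the main term becomes $\int(\mathbf{\Omega}^{i,j}\odot d_sw^i)\,\overline{u}^j\,dx$ with $\overline{u}^j=(u^j-(u^j)_{B_1})\psi$. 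The nonlocal div-curl lemma with right-hand side (Lemmas \ref{lem.h1norm} and \ref{lem.h1norm2}) bounds this by $\bigl(\mathcal{E}(u)^{1/p'}+\|\mu\|_{L^{p_0}}\bigr)\widetilde{\mathcal{E}}(w)^{1/p}\bigl([\overline{u}^j]_{BMO}+\|\overline{u}^j\|_{L^1}\bigr)$. The small factor is the last bracket, which is $\lesssim[u]_{M^{p,0}}+[u]_{BMO}\lesssim\epsilon$ by Lemma \ref{lem.locbmo}. The remainder $T^i$ is controlled by the interpolation $[u]^{p+1}_{W^{s(p-1)/(p+1),p(p+1)/(p-1)}}\lesssim[u]_{BMO}^2[u]_{W^{s,p}}^{p-1}$ (Lemma \ref{lem.gagnir}), and the tail pieces give $\epsilon^{q-p+1}$; together this is Lemma \ref{lem.decayu}. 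The div-curl lemma needs $\mu\in L^{p_0}$ with $p_0\le 1+\frac{s}{2(n-s)}$. So the paper runs the argument with $p_0=\min\{\frac{\gamma+1}{2},1+\frac{s}{2(n-s)}\}$ and passes to $\gamma$ by H\"older, rather than feeding $L^\gamma$ directly into the comparison as you do. Your Steps 2--3 do match the paper's Lemma \ref{lem.decayu2}. So does your choice of $R_0$ by absolute continuity, supplemented by $|u|=1$ and a dyadic sum to control tails in the Morrey quantity $[u]_{M^{p,0}}$.
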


\subsection{Main results in the supercritical case} We introduce $\varepsilon$-regularity results for linear problems with translation-invariant kernel coefficients in the supercritical case when $2s<n$. Indeed, this is the only interesting case left to be treated, since for $2s>n$, by Morrey's embedding weak solutions are continuous, so that the regularity becomes trivial. See e.g.\ the paper \cite{Millot-Sire-Yu}.
\begin{theorem}\label{thm1}
	Let $n>2s$ and let $u$ be a weak solution to 
	\begin{equation*}
		\mathcal{L}_su=|\mathbf{d}_{s,a}u|^2u+\mu\quad\text{in }\Omega,
	\end{equation*}
	where $a(x,y)=a(x-y)$ satisfies \eqref{cond.a}. Suppose $B_{2R}(x_0)\Subset \Omega$ and
	\begin{equation}\label{ass.thm1}
		\sup\limits_{B_{\rho}(z)\subset B_{2R}(x_0)}\rho^{2s-n}\widetilde{\mathcal{E}}(u;B_\rho(z))\leq \epsilon
	\end{equation}
	for some $\epsilon\in(0,1]$. If $\mu\in L^\gamma(B_R(x_0);\bbR^N)$ with $\gamma\neq \frac{n}{2s-1}$ and the constant $\epsilon=\epsilon(n,s,N,\Lambda,\gamma)$ is sufficiently small, then $u\in C^{2s-\frac{n}\gamma}(B_R(x_0);\bbR^N)$. Moreover, if $s>\frac12$, $\mu$ belongs to the Lorentz space $L^{\frac{n}{2s-1},1}(B_R(x_0);\bbR^N)$ and $\epsilon=\epsilon(n,s,N,\Lambda)$ is sufficiently small, then we have $u\in C^1(B_R(x_0))$.
\end{theorem}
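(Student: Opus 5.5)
The plan is to prove Theorem \ref{thm1} by a perturbation argument of Campanato type, built on two ingredients. The first is the \emph{rewriting of the nonlinearity in divergence-like form} using $|u|=1$. The second is an \emph{excess decay} obtained by comparison with the linear homogeneous problem $\mathcal{L}_s v=0$.

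\textbf{Rewriting the nonlinearity.} Since $|u|\equiv1$, we have
\[
\tfrac12|u(x)-u(y)|^2=u(x)\cdot(u(x)-u(y)).
\]
Using this, and following Millot--Sire and Schikorra, we rewrite the right-hand side $|\mathbf{d}_{s,a}u|^2u^i$ as
\[
\sum_j \Omega^{ij}\cdot d_s u^j+\text{(lower order)},
\]
where the antisymmetric "nonlocal connection" is
\[
\Omega^{ij}(x,y)=u^i(x)\,d_su^j(x,y)-u^j(x)\,d_su^i(x,y).
\]
The key cancellation is $\sum_j u^j\Omega^{ij}\cdot d_su^j=0$, which follows from $u\cdot d_su\approx\frac12|u(x)-u(y)|\,d_su$. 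This gives a commutator/compensation structure.

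\textbf{Excess decay.} On a ball $B_\rho(z)\subset B_{2R}(x_0)$, let $v$ solve $\mathcal{L}_sv=0$ in $B_{\rho/2}(z)$ with $v=u$ outside. For translation-invariant kernels, $v$ enjoys $C^{2s-\delta}$, and even $C^{1,\beta}$-type, excess decay when $s>1/2$; this is the linear theory, Lemma \ref{lem.fracdecay} and its higher order analogues. Testing the equation for $u-v$ with $u-v$ itself, we estimate the nonlinear term by the smallness \eqref{ass.thm1}, using a fractional Poincaré/Sobolev inequality together with the compensated structure. The goal is a bound
\[
\rho^{2s-n}\widetilde{\mathcal{E}}(u-v;B_{\rho/2})\lesssim \epsilon^{\kappa}\,\rho^{2s-n}\widetilde{\mathcal{E}}(u;B_\rho)+\rho^{2(2s-n/\gamma)}\|\mu\|_{L^\gamma}^2+\text{tail}.
\]
Iterating then gives Morrey decay
\[
\rho^{2s-n}\widetilde{\mathcal{E}}(u;B_\rho)\lesssim\rho^{2\beta}
\]
for some small $\beta$. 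This yields an initial Hölder continuity via Campanato's characterisation, and the tail terms are controlled by the supremum over all balls in \eqref{ass.thm1}.

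\textbf{Bootstrap to the optimal exponent.} With $u\in C^\beta$ in hand, $|\mathbf{d}_{s,a}u|^2$ belongs to a Morrey space of better order. We then improve iteratively, treating $f:=|\mathbf{d}_{s,a}u|^2u+\mu$ as a right-hand side of the linear equation $\mathcal{L}_su=f$. Linear Campanato estimates for translation-invariant kernels give $u\in C^{2s-n/q}$ when $f\in L^q$ (or in the corresponding Morrey space), as long as $2s-n/q<1$, i.e.\ $\gamma<\frac{n}{2s-1}$. When $\gamma>\frac{n}{2s-1}$, the target exponent exceeds $1$, and we interpret $C^{2s-n/\gamma}$ as $C^{1,2s-1-n/\gamma}$. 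Here we use the translation invariance of $a$ to difference-quotient the equation, i.e.\ we apply the estimate to $u(\cdot+h)-u$. This explains the excluded borderline $\gamma=\frac{n}{2s-1}$. At that borderline we instead use the Lorentz space $L^{\frac n{2s-1},1}$: a Riesz-potential/Dini-type summation of the excess decay over dyadic radii converges, which gives continuity of the gradient, in the spirit of the potential estimates of Theorem \ref{pot_intro}.

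\textbf{Main obstacle.} The hardest step is the first decay estimate. The quadratic term is only $L^1$ a priori, and there is no monotonicity formula, so smallness at one scale does not propagate. This is why \eqref{ass.thm1} assumes smallness at all scales and positions. I would first try to absorb the nonlinearity directly via $|u-v|\le2$ and the smallness of $\epsilon$, using the $L^\infty$ bound rather than compensation. If that loses too much, I would fall back on the antisymmetric structure of $\Omega^{ij}$ with a Uhlenbeck-type gauge or a Rivière-style argument adapted to the nonlocal setting.
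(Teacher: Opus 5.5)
Your overall architecture matches the paper's. You compare with $\mathcal{L}_s v=0$ on small balls, seek an excess-decay inequality with a small factor in front of the energy, and iterate \`a la Campanato. You then apply linear theory for translation-invariant kernels once $u\in C^\beta$ with $\beta>s$, since by Lemma~\ref{lem.dsuan} this makes $|\mathbf{d}_{s,a}u|^2u$ bounded. The gap is the step you flag as the main obstacle, which carries the whole theorem, and your first-choice route for it fails.

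Testing with $w=u-v$ and using only $|w|\le c$ gives
\[
\int_{B_{\rho/2}}|\mathbf{d}_{s,a}u|^2|w|\le c\,\widetilde{\mathcal{E}}(u;B_\rho).
\]
This bound is linear in the excess with an $O(1)$ constant. Assumption \eqref{ass.thm1} makes both sides small, but it never produces the factor $\epsilon^\kappa$ multiplying $\rho^{2s-n}\widetilde{\mathcal{E}}(u;B_\rho)$ that the iteration needs. The nonlinearity has exactly the scaling of the energy, $|\mathbf{d}_{s,a}u|^2$ is only in $L^1$ (Morrey), and the only $L^\infty$ bound on $w$ is $O(1)$, because the energy does not control $\|w\|_{L^\infty}$ when $n>2s$. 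So smallness can only come from compensation.

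Moreover, the structure you write down is not the right one: $\sum_j u^j\Omega^{ij}d_su^j=0$ is false. The correct identity, from $u(x)\cdot(u(x)-u(y))=\frac12|u(x)-u(y)|^2$, is
\[
|d_su|^2u^i(x)=\sum_j\Omega^{ij}\,d_su^j+\tfrac12|d_su|^2\,(u^i(x)-u^i(y)).
\]
What the argument actually uses is the nonlocal conservation law $-\mathrm{div}_s\Omega^{ij}=u^i\mu^j-u^j\mu^i$. It is obtained by testing the equation with $u^j\varphi$ and $u^i\varphi$ and subtracting, which kills the normal term.

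The paper's mechanism, absent from your proposal, is as follows.
\begin{enumerate}
\item Set $\bar u^j=(u^j-(u^j)_{B})\psi$ and use the product rule $d_s(fg)=d_sf\,g(x)+f(y)\,d_sg$. The main term becomes $\int(\Omega^{ij}\odot d_sw^i)\,\bar u^j$, plus a $\mu$-term, plus tail errors from the cut-off.
\item This is estimated by a nonlocal div--curl lemma in $\mathcal{H}^1$--BMO duality form (Mazowiecka--Schikorra). Because $\mu\neq0$, the lemma must first be extended to fields with divergence $g\in L^{p_0}$ (Lemma~\ref{lem.h1norm}, Remark~\ref{rmk.h1norm}). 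This is where solving $(-\Delta)^s v=g$ with higher integrability enters.
\item The result is a bound $\lesssim(\mathcal{E}(u)^{1/2}+\|\mu\|_{L^{p_0}})\,\widetilde{\mathcal{E}}(w)^{1/2}\,[u]_{\mathrm{BMO}}$. Since $[u]_{\mathrm{BMO}}\lesssim\epsilon$ by \eqref{ass.thm1} and Poincar\'e, this is where the small factor comes from.
\item The remainder you call lower order is cubic. Paired with $w$, it is bounded by $[u]^3_{W^{s/3,6}}[w]_{W^{s,2}}\lesssim[u]^2_{\mathrm{BMO}}[u]_{W^{s,2}}[w]_{W^{s,2}}$ via fractional Gagliardo--Nirenberg with BMO (Lemma~\ref{lem.gagnir}), so it is again small.
\end{enumerate}
A Rivi\`ere/Uhlenbeck gauge is unnecessary for sphere targets. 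In the supercritical Morrey-small nonlocal regime it would also require constructing a nonlocal gauge with Morrey estimates, which you do not indicate how to do.

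Finally, once the decay inequality holds, the higher decay of $v$ (Lemma~\ref{lem.potsgra}, with exponent in $(s,\min\{2s,1\})$) lets the iteration pass the exponent $s$ in one step. Your multi-step bootstrap from a small $\beta$ is therefore not needed.
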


\begin{remark}
	We refrain to derive our results in the most general setting. For instance, by invoking comparison techniques from e.g.\ \cite{MeV,ByunKimCC,DefMinNow25}, we expect that in all our main results where we assume translation invariance, the coefficient $a$ could satisfy weaker regularity properties, such as a small BMO norm or a VMO property.
\end{remark}

Next, we establish a higher Sobolev regularity result of Calder\'on-Zygmund type in the case of the fractional Laplacian.
\begin{theorem}\label{thm3}
	Let $n>2s$ and let $u$ be a weak solution to 
	\begin{equation*}
		(-\Delta)^su=|\mathbf{d}_{s}u|^2u+\mu\quad\text{in }\Omega.
	\end{equation*}
	Suppose $B_{2R}(x_0)\Subset \Omega$ and \eqref{ass.thm1} with a small constant $\epsilon\in(0,1]$. For any $\gamma\geq2$, if $n<2s\gamma$, $\mu\in L^{\gamma}(B_{2R}(x_0);\bbR^N)$, and  $\epsilon=\epsilon(n,s,N,\gamma)$ is sufficiently small, then $u\in W^{2s,\gamma}(B_R(x_0);\bbR^N)$. 
\end{theorem}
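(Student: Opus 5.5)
The plan is to treat the harmonic-map nonlinearity as a right-hand side in $L^\gamma$ and then apply linear Calder\'on--Zygmund theory for $(-\Delta)^s$. The first step is to use Theorem \ref{thm1} in the case $a\equiv 1$. Since $\mu\in L^\gamma$ with $\gamma>\frac{n}{2s}$, the smallness \eqref{ass.thm1} gives $u\in C^{\beta}(B_{3R/2}(x_0))$ with $\beta=2s-\frac n\gamma>0$, after shrinking $\gamma$ slightly if $\gamma=\frac{n}{2s-1}$. If $\beta\ge 1$, we replace it by any exponent in $(s,1)$, or in $(0,1)$ when $\beta<1$. The same argument applied on every small ball $B_\rho(z)\subset B_{3R/2}(x_0)$ gives quantitative Campanato decay of the form
\[
\dashint_{B_r(z)}|u-(u)_{B_r(z)}|\,dx\le c\,r^{\beta},\qquad r\le R/4 ,
\]
with $c$ depending on $\epsilon$, $R$ and $\|\mu\|_{L^\gamma}$.

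The second step shows that $f:=|\mathbf{d}_su|^2u$ belongs to $L^\gamma(B_{R'}(x_0))$ for some $R<R'<3R/2$. Since $|u|\equiv 1$, we have $|f|\le |\mathbf{d}_su|^2$. Fix $x\in B_{R'}(x_0)$ and split
\[
|\mathbf{d}_su|^2(x)=\int_{|x-y|<\delta}\frac{|u(x)-u(y)|^2}{|x-y|^{n+2s}}\,dy+\int_{|x-y|\ge\delta}\frac{|u(x)-u(y)|^2}{|x-y|^{n+2s}}\,dy .
\]
The far part is bounded by $c\,\delta^{-2s}$ because $|u|\le1$. If $2\beta>2s$, the near part is bounded by $c[u]_{C^\beta}^2\delta^{2\beta-2s}$, so $f\in L^\infty$ locally. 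If $\beta\le s$, that pointwise estimate fails, and this is the main obstacle.

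To handle the obstacle I will bootstrap. The idea is to use the equation to improve differentiability of $u$ beyond $s$. Write $u=v+w$ on a ball $B$, where $(-\Delta)^s v=f+\mu$ with zero exterior data and $w$ is $s$-harmonic in $B$. The function $w$ is smooth in the interior. For $v$, use that $f\in L^1$, with a Morrey bound coming from \eqref{ass.thm1} and the H\"older decay, together with $\mu\in L^\gamma$ and the Riesz potential mapping properties of $(-\Delta)^{-s}$ on Morrey spaces. This yields $u\in C^{\beta'}$ locally with $\beta'>\beta$. The gain comes from the Morrey exponent: $\rho^{2s-n}\widetilde{\mathcal{E}}(u;B_\rho)\lesssim \rho^{2\beta}$ makes $f$ lie in $L^{1,n-2s+2\beta}$, and the potential then yields H\"older order $2\beta$. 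Iterating finitely many times reaches $\beta'>s$, since $\beta\mapsto\min\{2\beta,2s-\frac n\gamma\}$ increases until it hits the cap $2s-\frac n\gamma>s$. At that point the second step gives $f\in L^\infty_{loc}\subset L^\gamma_{loc}$. Note that the cap exceeds $s$ because $\gamma\ge2$ and $n<2s\gamma$; this is what the assumption $\gamma\ge2$ should ensure, possibly combined with a direct $L^\gamma$ estimate instead of $L^\infty$.

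The final step applies linear theory. Now $u$ solves $(-\Delta)^su=g$ in $B_{R'}(x_0)$ with $g=f+\mu\in L^\gamma(B_{R'}(x_0))$, and $u\in L^\infty(\mathbb{R}^n)$. We localize with a cutoff $\eta$: the function $\eta u$ satisfies $(-\Delta)^s(\eta u)=\eta g+h$, where $h$ collects commutator and tail terms that are bounded and smooth in $B_R(x_0)$. Interior Calder\'on--Zygmund estimates for the fractional Laplacian, namely the $L^\gamma\to W^{2s,\gamma}$ (Bessel potential) regularity of $(-\Delta)^{-s}$, then give $u\in W^{2s,\gamma}(B_R(x_0))$. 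This is exactly where $a\equiv1$ is used.
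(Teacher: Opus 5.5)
There is a genuine gap in the range $\frac{n}{2s}<\gamma\le\frac{n}{s}$, which the hypotheses allow (for instance $n=2$, $s=\frac12$, $\gamma=3$). Your claim that the cap $2s-\frac n\gamma$ exceeds $s$ because $\gamma\ge 2$ and $n<2s\gamma$ is false. The inequality $2s-\frac n\gamma>s$ is equivalent to $\gamma>\frac ns$, whereas $n<2s\gamma$ only gives $\gamma>\frac{n}{2s}$.

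In this range you cannot hope for $u\in C^{\beta'}$ with $\beta'>s$ at all. Indeed, $2s-\frac n\gamma$ is the sharp H\"older exponent already for the linear equation $(-\Delta)^s v=\mu$ with $\mu\in L^\gamma$. Concretely, your iteration starts from the value $\beta=2s-\frac n\gamma$ given by Theorem \ref{thm1}, and the map $\beta\mapsto\min\{2\beta,2s-\frac n\gamma\}$ returns that same value.

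A second, smaller problem: the Morrey bound $\rho^{2s-n}\widetilde{\mathcal{E}}(u;B_\rho)\lesssim\rho^{2\beta}$ does not follow from $u\in C^\beta$ when $\beta\le s$. The Gagliardo integral of a $C^\beta$ function over $B_\rho\times B_\rho$ may diverge, so this bound would have to come from the energy decay in the proof of Lemma \ref{lem.decayu2}.

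Hence the pointwise bound on $|\mathbf{d}_su|^2$ is unavailable, and the ``direct $L^\gamma$ estimate'' you mention in passing is exactly the missing step. For $\gamma>\frac ns$ your argument is fine: it is case (2) of the proof of Theorem \ref{thm1} followed by Lemma \ref{lem.2sr}.

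The missing idea is to control $|\mathbf{d}_su|^2$ in $L^\gamma$ through Sobolev regularity of order above $s$, rather than H\"older regularity. If $u\in W^{s+\delta,2\gamma}(B_R(x_0))$, H\"older's inequality in $y$ gives
\[
\int_{B_{R/2}(x_0)}\Big(\int_{B_R(x_0)}\frac{|u(x)-u(y)|^2}{|x-y|^{n+2s}}\,dy\Big)^{\gamma}dx\le c\,[u]^{2\gamma}_{W^{s+\delta,2\gamma}(B_R(x_0))}.
\]
The part with $y\notin B_R(x_0)$ is harmless since $|u|\equiv1$.

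The paper obtains this regularity from Theorem \ref{thm2}. The comparison and div-curl argument (Lemma \ref{lem.decayu3}, resting on Lemma \ref{lem.potsgra} for translation-invariant kernels) yields excess decay of some order $\alpha\in(s,\min\{1,2s\})$ at every point $x$. The constant, however, depends on $x$: it is $\epsilon/R^{\alpha}+M_{p_0(2s-\alpha),R}(|\mu|^{p_0})(x)^{1/p_0}$. This does not give $C^\alpha$. But since the fractional maximal function maps $L^\gamma$ into $L^{n\gamma/(n-(2s-\alpha)\gamma)}$, it gives $u\in W^{\sigma,\frac{n\gamma}{n-(2s-\sigma)\gamma}}(B_R(x_0))$ for some $\sigma>s$.

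The hypothesis $n<2s\gamma$ is used precisely to make this integrability exponent larger than $2\gamma$ for $\sigma$ close to $s$, so that Sobolev embedding gives $W^{s+\delta,2\gamma}$. Your final Calder\'on--Zygmund step (Lemma \ref{lem.2sr}) then closes the argument. Note also that in the paper $\gamma\ge2$ enters only through Theorem \ref{thm2} and Lemma \ref{lem.2sr}, not through the size of a H\"older exponent.
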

\begin{remark}We point out that we also prove higher Sobolev regularity of solutions to \eqref{eq.main} with $p=2$ and $a(x,y)=a(x-y)$ (see Theorem \ref{thm2} below).
\end{remark}

An important aspect of our results, actually their proof, is that since we cannot use the Caffarelli-Silvestre extension, we are forced to deal only with the boundary equation. In particular, we need to deal with the nonlocal nature of the equation at every stage of the proof. Due to these complications, in the general case, our smallness hypotheses may not be optimal. However, it is clear that in the case of the fractional Laplacian, we recover some results by Pegon, Millot and Schikorra \cite{MilPegSch21} providing an alternative proof and actually, allowing non homogeneous right-hand side data, which is important for compactness issues. 

%\marginpar{Y: I commented out the remark on the singular set}
%\begin{remark}
%	Of course, using standard covering arguments, our statements imply some measure-theoretic estimates on the singualr set of the maps. Once again, the lack of monotonicity and our general framework, prevent us to implement a dimension reduction, which would lead to much better estimates of the set of singualr points of the map. We refrain to state such suboptimal results. 
%\end{remark}

\subsection{Results for time-dependent equations}

We now apply our result to the heat flow of the fractional harmonic maps. First, we give the definition of weak solutions.
\begin{definition}\label{defn.flow}
	We say that $u=u(x,t)$ is a weak solution to
	\begin{equation*}
		\partial_tu+(-\Delta)^su=|\mathbf{d}_{s}u|^2u\quad\text{in }\Omega_T\coloneqq \Omega\times (0,T],
	\end{equation*}
	if $u\in L^\infty(0,T;\widehat{H}^{s}(\Omega;\bbR^N))\cap H^1(0,T;L^2(\Omega;\bbR^N))$ with $|u|\equiv 1$ on $\bbR^n\times (0,T]$ satisfies
	\begin{align*}
		&\int_{0}^{T}\int_{\Omega}\partial_tu\phi\,dx\,dt+\int_{0}^{T}\int_{\mathcal{C}\Omega}d_sud_s\phi\frac{1}{|x-y|^n}\,dx\,dy\,dt\\
		&=\int_{0}^{T}\int_{\Omega}\left(\int_{\bbR^n}|d_{s}u(x,y,t)|^2 \frac{1}{|x-y|^{n}}\,dy\right)u(x,t)\phi(x,t)\,dx\,dt
	\end{align*}
	for any $\phi\in L^2(0,T;\widehat{H}^s_{00}(\Omega;\bbR^N))$.
\end{definition}
\begin{remark}
	In \cite{SchSirWan17}, the existence theory for the corresponding initial boundary value problem is established. 
\end{remark}
We now present a higher differentiability result of the solution to the heat flow of the fractional harmonic map.
\begin{theorem}\label{thm4}
	Let $u$ be a weak solution to 
	\begin{equation*}
		\partial_tu+(-\Delta)^su=|\mathbf{d}_{s}u|^2u\quad\text{in }\Omega_T.
	\end{equation*}
	Fix $\sigma\in(0,\min\{1,2s\})$. There is a small constant $\epsilon=\epsilon(n,s,N,\sigma)$ such that if 
	\begin{equation}\label{ass.thm4}
		\begin{aligned}
			\sup\limits_{B_{\rho}(x)\subset B_{2R}(x_0)}\sup_{t\in I_{2R}(t_0)}\rho^{2s-n}\widetilde{\mathcal{E}}(u(\cdot,t);B_\rho(x))\leq \epsilon
		\end{aligned}
	\end{equation}
	then $u\in L^2(I_R(t_0);W^{\sigma,2}(B_R(x_0);\bbR^N))$. In particular, if $n<4s$, then we can take $\sigma=2s$.
\end{theorem}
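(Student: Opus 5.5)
The plan is a difference-quotient (Nirenberg) argument in space, applied to a localized version of the flow. It relies on the elliptic $\varepsilon$-regularity of Theorem \ref{thm1} and Theorem \ref{thm3}, used at almost every fixed time.

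\textbf{Step 1: time slices.} For a.e.\ $t\in I_{2R}(t_0)$ we have $\partial_tu(\cdot,t)\in L^2(\Omega)$. So $u(\cdot,t)$ is a weak solution of the stationary equation
\begin{equation*}
(-\Delta)^su=|\mathbf{d}_su|^2u+\mu_t,\qquad \mu_t\coloneqq-\partial_tu(\cdot,t).
\end{equation*}
The smallness hypothesis \eqref{ass.thm4} holds uniformly in $t$, so it gives \eqref{ass.thm1} at each such time. Theorem \ref{thm1} with $\gamma=2$ would then give a H\"older bound, but only if $n<4s$. In that case Theorem \ref{thm3} with $\gamma=2$ gives $u(\cdot,t)\in W^{2s,2}(B_R)$ with the bound
\begin{equation*}
\|u(\cdot,t)\|_{W^{2s,2}(B_R)}\le c\big(\|\partial_tu(\cdot,t)\|_{L^2(B_{2R})}+\text{tail}+1\big).
\end{equation*}
Squaring and integrating in $t$, using $\partial_tu\in L^2L^2$ and $|u|=1$, yields the statement with $\sigma=2s$. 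For this I must check that the constants in Theorem \ref{thm3} depend linearly on $\|\mu\|_{L^2}$. I expect this to come from its proof, namely a Campanato iteration in which $\mu$ enters linearly.

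\textbf{Step 2: general $n$.} When $n\ge 4s$, the slice argument gives no Morrey-type control of $\mu_t$, so I argue directly on the flow. I test the weak formulation with
\begin{equation*}
\phi=\Delta_{-h}(\eta^2\Delta_hu),\qquad \Delta_hu=u(x+h,t)-u(x,t),
\end{equation*}
where $\eta$ is a spatial cutoff. Before that I apply a Steklov average in time to justify the choice. The test function produces three kinds of terms:
\begin{itemize}
\item The time term $\int\partial_tu\,\phi$ is handled in $L^2$ via the $H^1L^2$ assumption.
\item The nonlocal diffusion term gives $\int\!\!\iint|d_s(\eta\Delta_hu)|^2$, together with commutator and tail errors bounded by $|h|^{2\sigma'}$ plus the energy.
\item The right-hand side is $\iint |\mathbf{d}_su|^2u\cdot\phi$.
\end{itemize}
For the right-hand side I use the sphere constraint $u\cdot\Delta_hu=-\tfrac12|\Delta_hu|^2$ in the usual way. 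I then estimate $|\mathbf{d}_su|^2$ with the fractional Morrey smallness \eqref{ass.thm4}, via a fractional Hardy/Fefferman--Phong-type trace inequality: small Morrey norm of $|\mathbf{d}_su|^2$ times $\|\eta\Delta_hu\|_{\dot H^s}^2$. This lets the term be absorbed into the left-hand side.

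\textbf{Step 3: iteration.} Dividing by $|h|^{2\sigma}$ and iterating in the style of Domokos/Brasco--Lindgren, I would raise the differentiability from $s$ up to any $\sigma<\min\{1,2s\}$, keeping the integrability $L^2$ in time. The restriction $\sigma<1$ reflects that first-order difference quotients cannot be pushed beyond order one.

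\textbf{Main obstacle.} The hardest part is the absorption of the critical term $|\mathbf{d}_su|^2u$ in Step 2. It is only in $L^1$, and it is the smallness $\epsilon$ together with the sphere structure that must make it perturbative. I would first try the Morrey-space trace inequality above. If that fails, the fallback is to import from Theorem \ref{thm1} the Morrey-decay of $\rho^{2s-n}\widetilde{\mathcal E}(u(\cdot,t);B_\rho)$ with an improved exponent, applied slice-wise with $\mu_t\in L^2$. This improved decay would then control the nonlinearity at each iteration step.
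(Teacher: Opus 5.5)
Your Step 1 is what the paper does for $\sigma=2s$, $n<4s$: it applies Theorem \ref{thm3} with $\gamma=2$ at a.e.\ time, with $\mu=-\partial_tu(\cdot,t)$. The genuine gap is Step 2, which has to carry the main claim for all $n>2s$.

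After your identity $\Delta_h(fu)\cdot\Delta_hu=\tfrac12\big(f(x+h)+f(x)\big)|\Delta_hu|^2$, with $f=|\mathbf{d}_su|^2$, the right-hand side becomes $\tfrac12\int(f(\cdot+h)+f)\eta^2|\Delta_hu|^2\ge 0$. This is on the wrong side, and the compensation structure of Lemmas \ref{lem.h1norm} and \ref{lem.h1norm2} has been discarded. To absorb it you invoke $\int V|g|^2\le C\big(\sup_{B_\rho(z)}\rho^{2s-n}\int_{B_\rho(z)}V\big)[g]_{H^s}^2$ for $V\ge0$. This endpoint Fefferman--Phong inequality is false; it needs $V$ in a Morrey space $L^{r,n-2sr}$ with $r>1$. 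For a counterexample, take $E$ Ahlfors $(n-2s)$-regular (a hyperplane if $s=\tfrac12$) and $V_\delta=\delta^{-2s}\chi_{\{\operatorname{dist}(\cdot,E)<\delta\}}$. Then $\sup_{B_\rho(z)}\rho^{2s-n}\int_{B_\rho(z)}V_\delta\le c$ uniformly in $\delta$. Yet the best trace constant blows up as $\delta\to0$, since $E$ has zero $H^s$-capacity. Multiplying by $\epsilon$ shows smallness does not help.

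Your fallback fails exactly where you need it. For $n\ge4s$, Theorem \ref{thm1} does not apply with $\gamma=2$. With only $\mu_t\in L^2$, improved decay of $\rho^{2s-n}\widetilde{\mathcal E}(u(\cdot,t);B_\rho(x))$ holds only pointwise, with a constant given by a fractional maximal function of $\mu_t$ at $x$. That constant is not bounded uniformly in $x$.

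The missing idea is that the slice-wise argument needs no Morrey control of $\mu_t$; this is how the paper treats every $n>2s$. By \eqref{ineqaa}, assumption \eqref{ass.thm4} gives $[u(\cdot,t)]_{M^{2,0}(B_{2R}(x_0))}\le c\,\epsilon^{1/2}$ uniformly in $t$. Lemma \ref{lem.decayu3} then gives, at each point $x$, a Campanato bound linear in $\mu_t$:
\[
\sup_{0<r\le R}r^{-\sigma_0}\dashint_{B_r(x)}|u-(u)_{B_r(x)}|\,dy\le c\Big(\epsilon R^{-\sigma_0}+M_{p_0(2s-\sigma_0),R}(|\mu_t|^{p_0})(x)^{\frac1{p_0}}\Big).
\]
For $\mu_t\in L^2$, this fractional maximal function lies in $L^{\frac{2n}{n-2(2s-\sigma_0)}}$. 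The maximal-function characterization of fractional Sobolev spaces \cite[Proposition 2.8]{DieNow23} turns this into Theorem \ref{thm2} with $\gamma=2$: for some $\sigma'>\sigma$,
\[
[u(\cdot,t)]_{W^{\sigma',\frac{2n}{n-2(2s-\sigma')}}(B_{R/4}(x_0))}\le c\big(1+\|\partial_tu(\cdot,t)\|_{L^2(B_{2R}(x_0))}\big).
\]
Embed into $W^{\sigma,2}$, square, and integrate in time using $\partial_tu\in L^2(L^2)$. No difference quotients or time-testing are needed.

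Your caution in Step 1 is warranted. Inserting the $W^{\sigma',\cdot}$ bound into $|\mathbf{d}_su|^2$, as in the proof of Theorem \ref{thm3}, naively gives a bound quadratic in $1+\|\partial_tu(\cdot,t)\|_{L^2}$. So the linear form of \eqref{ineq2.thm3} needed for the time integration requires an extra argument, for instance interpolation against the small BMO seminorm followed by absorption.
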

As a corollary, we derive the same result as in \cite[Proposition 1.5]{Wet22} by a different approach.
\begin{corollary}\label{cor3}
	Let $n=1$, $s=1/2$ and let $u$ be a weak solution to 
	\begin{equation*}
		\partial_tu+(-\Delta)^{1/2}u=|\mathbf{d}_{1/2}u|^2u\quad\text{in }\Omega_T.
	\end{equation*}
	Then $u\in L^2(I_R(t_0);W^{1,2}(B_R(x_0);\bbR^N)$ for any $I_R(t_0)\times B_R(x_0)\Subset \Omega_T$.
\end{corollary}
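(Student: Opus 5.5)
The plan is to obtain Corollary \ref{cor3} from Theorem \ref{thm4} with $n=1$, $s=1/2$. Since $n=1<2=4s$, Theorem \ref{thm4} gives $\sigma=2s=1$, hence $u\in L^2(I_R(t_0);W^{1,2}(B_R(x_0);\bbR^N))$, provided the smallness assumption \eqref{ass.thm4} holds. The only work is to verify \eqref{ass.thm4} on a suitable small cylinder, and then cover. In the critical case $n=2s$ the scaling factor $\rho^{2s-n}$ equals $1$. So \eqref{ass.thm4} reduces to
\begin{equation*}
\sup_{t\in I_{2R}(t_0)}\sup_{B_\rho(x)\subset B_{2R}(x_0)}\widetilde{\mathcal{E}}(u(\cdot,t);B_\rho(x))\leq \sup_{t\in I_{2R}(t_0)}\widetilde{\mathcal{E}}(u(\cdot,t);B_{2R}(x_0))\leq\epsilon .
\end{equation*}
Here we use the monotonicity of $\widetilde{\mathcal{E}}$ in the set, since $\mathcal{C}B_\rho\subset\mathcal{C}B_{2R}$.

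\emph{Step 1: reduce to uniform absolute continuity.} We must show that $\widetilde{\mathcal{E}}(u(\cdot,t);B_{r}(x))\to0$ as $r\to0$, uniformly in $t$ on a short time interval. We have $u\in L^\infty(0,T;\widehat H^{1/2})$, but a uniform energy bound alone does not prevent concentration. We therefore use the regularity in time, $u\in H^1(0,T;L^2)$. Together with $L^\infty_t \widehat H^{1/2}_x$, interpolation gives $u\in C([0,T];\widehat H^{\theta})$ for all $\theta<1/2$. The critical norm $\widehat H^{1/2}$ needs more care, and there are two routes.

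The first route is the energy identity. Testing the equation with $\partial_t u\,\eta^2$ (admissible since $\partial_tu\in L^2$ and $\partial_t u\cdot u=0$, which kills the right-hand side) gives the localized energy inequality
\begin{equation*}
\frac{d}{dt}\iint \eta^2\text{-localized }|d_su|^2\frac{dxdy}{|x-y|^n}\leq c\,\|\partial_tu\|_{L^2}^2+\text{tail/commutator terms}.
\end{equation*}
From this, $t\mapsto\widetilde{\mathcal{E}}(u(\cdot,t);B_r(x))$ is nearly absolutely continuous: its oscillation over $I_\delta(t_0)$ is controlled by $\int_{I_\delta}\|\partial_t u\|_{L^2}^2$ plus $\delta$ times $\sup_t$ energy, and both are small as $\delta\to0$. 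At the fixed time $t_0$, the single function $u(\cdot,t_0)\in\widehat H^{1/2}$ has absolutely continuous energy. We can therefore choose $r$ small with $\widetilde{\mathcal{E}}(u(\cdot,t_0);B_{r}(x_0))\le\epsilon/2$, and then $\delta$ small so that the energy stays below $\epsilon$ on $I_\delta(t_0)$. The second route, if the commutator terms are awkward, is to take $R$ small depending on $(x_0,t_0)$ and rely on the weak continuity $u\in C_w([0,T];\widehat H^{1/2})$ combined with the energy inequality, which gives strong continuity of the energy.

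\emph{Step 2: apply and cover.} Step 1 gives \eqref{ass.thm4} on $B_{2r}(x_0)\times I_{2r}(t_0)$, so Theorem \ref{thm4} yields $u\in L^2(I_{r}(t_0);W^{1,2}(B_{r}(x_0)))$. A finite covering of the compact set $\overline{B_R(x_0)}\times\overline{I_R(t_0)}\Subset\Omega_T$ by such cylinders then gives the claim.

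The main obstacle is Step 1, namely proving uniform-in-time non-concentration of the critical $\widehat H^{1/2}$ energy. The localized energy inequality has nonlocal tails coupled to $\partial_t u$, and these must be bounded by the global energy and $\|\partial_tu\|_{L^2}$. I expect the cutoff commutator to be controlled by $\iint|\eta(x)-\eta(y)|^2|u(x)-u(y)|^2|x-y|^{-2}\,dx\,dy$. This quantity is bounded by $cr^{-1}\|\nabla\eta\|$-type terms using $|u|=1$, and it stays finite for fixed $r$.
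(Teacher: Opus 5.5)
Your reduction is the same as the paper's. Since $n=1<4s$, Theorem \ref{thm4} applies with $\sigma=2s=1$. Because $\rho^{2s-n}=1$ and $\mathcal{C}B_\rho(x)\subset\mathcal{C}B_{2r}(x_0)$ whenever $B_\rho(x)\subset B_{2r}(x_0)$, hypothesis \eqref{ass.thm4} reduces to $\sup_t\widetilde{\mathcal{E}}(u(\cdot,t);B_{2r}(x_0))\le\epsilon$ on a short time interval, and a covering then finishes. You are right that this uniform-in-time non-concentration is the whole content, and that $u\in L^\infty(0,T;\widehat H^{1/2})$ does not give it. The paper handles exactly this point in one line, citing the single-time argument behind \eqref{ineq.nspii} together with $u\in L^\infty(0,T;\widehat H^{1/2})$, so it does not supply the missing step either. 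Your attempt to supply it does not work, however.

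The gap is in Step 1: $\partial_tu\,\eta^2$ is not an admissible test function. Definition \ref{defn.flow} only allows $\phi\in L^2(0,T;\widehat H^{1/2}_{00}(\Omega))$, whereas $\partial_tu$ is merely in $L^2_{t,x}$. The orthogonality $u\cdot\partial_tu=0$ does dispose of the right-hand side. But the term $\iint d_su\cdot d_s(\partial_tu\,\eta^2)\,|x-y|^{-n}\,dx\,dy$ is not defined. Identifying it with half the time derivative of a localized energy is a chain rule that needs, roughly, $(-\Delta)^{1/2}u\in L^2_{t,x}$, i.e.\ $u\in L^2_tW^{1,2}_x$, which is the conclusion itself. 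You might try to justify it with the admissible Steklov quotients $h^{-1}(u(\cdot,t+h)-u(\cdot,t))\eta^2$. Then the cancellation becomes $u(t)\cdot(u(t+h)-u(t))=-\frac12|u(t+h)-u(t)|^2$, and besides cutoff terms you are left with two terms: $\frac1{2h}\iint\eta^2(x)|d_s(u(t+h)-u(t))|^2\frac{dx\,dy}{|x-y|^n}$ and $-\frac1{2h}\int|\mathbf{d}_{s}u(t)|^2|u(t+h)-u(t)|^2\eta^2\,dx$. They have opposite signs, and you cannot show they cancel as $h\to0$ without already knowing the regularity you are after.

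This is not a technicality. In the local critical analogue, the two-dimensional harmonic map flow, there are weak solutions in $L^\infty H^1\cap H^1L^2$ that violate the energy inequality and nucleate a bubble at an interior time (Topping's reverse bubbling). At such a point the smallness in \eqref{ass.thm4} fails on every cylinder, and nothing in Definition \ref{defn.flow} rules out the analogous behaviour here. Your second route has the same defect, since it presupposes an energy inequality. Even a global energy inequality combined with weak continuity only gives continuity of the energy from the right, so concentration as $t\uparrow t_0$ (Struwe-type singular times) remains possible. What is missing is an ingredient that excludes concentration of $\widetilde{\mathcal{E}}(u(\cdot,t);B_r(x_0))$ for $t$ near $t_0$, for example a localized energy inequality. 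Such an ingredient would have to be assumed; it cannot be derived from the weak formulation.
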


\subsection{Applications to harmonic maps with free boundary.} As explained before, one of our main motivations consists of obtaining {\sl new} estimates for harmonic maps with free boundary, which are relevant systems of harmonic maps with geometric Neumann boundary conditions. In this section, we state several consequences of our results for such PDEs and refer the reader to Section \ref{sec:applications} for more details.  We start with the definition of weak harmonic maps with free boundary. It has been proved in \cite{millotSire} that an (almost-)harmonic map with free boundary into the sphere satisfies (in the distributional sense)
\begin{equation}\label{defFBIntro}
\begin{cases}
\Delta u = 0 & \text{in $\Omega \subset \mathbb{R}^{n+1}_+$}\,,\\[8pt]
\displaystyle \frac{\partial u}{\partial\nu} \,+\mu\, \bot  \ T_u\,\mathbb{S}^{N-1} & \text{in $H^{-1/2}(\partial^0 \Omega)$}\,,
\end{cases}
\end{equation}
where $\partial^0 \Omega \subset \partial \mathbb{R}^{n+1}_+$.
The previous theorems have the following consequences for \eqref{defFBIntro}. 
\begin{theorem}\label{thFBIntro}
Assume $n=1$ and let $u$ be a weak solution of \eqref{defFBIntro}. If $\mu\in L^{\gamma}(B_{2R}(x_0);\bbR^N)$ with $\gamma \in (1,\infty)$, then $\text{Tr}(u)\in C^{1-\frac1\gamma}(B_R(x_0);\bbR^N)$. If  $\mu \in L^2(B_{2R}(x_0))$, then $\text{Tr}(u)\in W^{1,2}(B_R(x_0);\bbR^N)$, where $\text{Tr}(u)$ is the trace in $H^{1/2}$ of $u$ on $\partial^0 \Omega$. 
\end{theorem}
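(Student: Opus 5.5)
The plan is to reduce Theorem \ref{thFBIntro} to Theorem \ref{cor2} with $a\equiv 1$ (up to a normalising constant), via the harmonic extension. Set $v:=\text{Tr}(u)$ on $\partial^0\Omega$. Since $u$ is harmonic in $\Omega\subset\mathbb{R}^{2}_+$, I would first localise. Take a cut-off $\eta$ supported in a half-ball $B^+_{4R}(x_0)\Subset\Omega\cup\partial^0\Omega$ and decompose $u$ as the Poisson extension $P[\eta v]$ of the localised trace plus a remainder $w$. The remainder is harmonic in $B^+_{2R}$ and has smooth boundary data near $B_{2R}(x_0)$ (it vanishes there). By reflection and standard elliptic estimates, $\partial_\nu w$ is smooth on $B_{2R}(x_0)$. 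For the Poisson extension, the Dirichlet-to-Neumann map gives $-\partial_\nu P[\eta v]=c_0(-\Delta)^{1/2}(\eta v)$ in $H^{-1/2}$.

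Hence on $B_{2R}(x_0)$ the free boundary condition in \eqref{defFBIntro} reads
\[
(-\Delta)^{1/2}v \,-\, \tilde\mu \;\perp\; T_v\mathbb{S}^{N-1},
\]
where $\tilde\mu$ is $\mu/c_0$ corrected by the smooth term coming from $w$ and the commutator between $\eta$ and $(-\Delta)^{1/2}$. This correction is bounded on $B_{2R}$, so $\tilde\mu\in L^\gamma(B_{2R}(x_0))$ whenever $\mu$ is. Also, $|v|=1$ a.e. holds because the trace of a sphere-valued map takes values in the sphere. Some care is needed because Theorem \ref{cor2} requires $|v|\equiv1$ on all of $\bbR$. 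So I would localise instead by extending $v$ outside the boundary portion as a sphere-valued $H^{1/2}_{loc}$ map. The effect of this modification on $B_{2R}$ is again a smooth tail term, which I absorb into $\tilde\mu$.

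Next I convert the orthogonality condition into the equation of Theorem \ref{cor2}, following the argument of \cite{millotSire}. Decompose $(-\Delta)^{1/2}v-\tilde\mu=\lambda v$ and test with $\phi v$, using the identity
\[
v(x)\cdot(v(x)-v(y))=\tfrac12|v(x)-v(y)|^2 .
\]
This identifies $\lambda=|\mathbf{d}_{1/2}v|^2+\tilde\mu\cdot v$ up to normalisation. We obtain
\[
\mathcal{L}_{1/2}v=|\mathbf{d}_{1/2,1}v|^2v+\hat\mu,\qquad \hat\mu=\tilde\mu-(\tilde\mu\cdot v)v ,
\]
and $|\hat\mu|\le|\tilde\mu|$ because $|v|=1$, so $\hat\mu\in L^\gamma$. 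In particular, $v$ is a weak solution in the sense of the Definition with $p=2$, $s=1/2$, $n=1$ and $a\equiv$ const.

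Theorem \ref{cor2} then gives $v\in C^{1-1/\gamma}(B_R(x_0))$. When $\gamma=2$, its second part gives $v\in W^{1,2}(B_R(x_0))$.

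I expect the main obstacle to be the rigorous handling of the nonlocal tail in the localisation. Specifically, I need to check that the difference between $\partial_\nu u$ on $B_{2R}(x_0)$ and $(-\Delta)^{1/2}$ of a globally defined sphere-valued map really lies in $L^\gamma$, and in fact is smooth. I also need the weak formulation to match exactly the test class $\widehat{H}^{1/2}_{00}$ used in the Definition. To handle this, I would derive the kernel representation of the tail from the Poisson kernel and check that it is smooth in $x\in B_{2R}$ for $y$ away from $B_{4R}$.
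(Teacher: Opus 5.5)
Your proposal is correct and follows the paper's route. You identify the Neumann condition with $(-\Delta)^{1/2}$ of the trace, split test functions into normal and tangential parts as in \cite{millotSire} to get $\mathcal{L}_{1/2}v=|\mathbf{d}_{1/2,1}v|^2v+\hat\mu$ with $|\hat\mu|\le|\tilde\mu|$, and then apply Theorem \ref{cor2}; your localisation (a harmonic remainder with vanishing trace and a bounded tail from the sphere-valued extension) makes explicit the free-boundary-to-trace direction, which the paper's one-line proof leaves implicit since Proposition \ref{fracweakharm} is stated in the converse direction, and the only slip is a harmless sign: $(-\Delta)^{1/2}v-\tilde\mu=\lambda v$ gives $\lambda=|\mathbf{d}_{1/2}v|^2-\tilde\mu\cdot v$, which is what actually yields your (correct) $\hat\mu$.
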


\section{Preliminaries }
 In this paper, we usually denote a universal constant that is equal to or greater than 1 as $c$. Moreover, when the constant $c$ depends on some parameters such as $n,s$ and $\Lambda$, we write $c=c(n,s,\Lambda)$. We also denote $B_R(x_0)$ the ball in $\bbR^n$ with center $x_0$ and radius $R>0$.

We now introduce some geometric and functional notation. Let us write $B_R(x_0)$ to mean the ball in $\bbR^n$ with center $x_0$ and radius $R>0$. 

Let us define
\begin{equation*}
    L^p_{\mathrm{od}}(\Omega)\coloneqq\{F:\mathcal{C}\Omega\to\bbR\,:\, F(x,y)=-F(y,x)\quad\text{and}\quad \|F\|_{L^p_{\mathrm{od}}}<\infty\},
\end{equation*}
where
\begin{align*}
    \|F\|_{L^p_{\mathrm{od}}(\Omega)}\coloneqq \left(\iint_{\mathcal{C}\Omega}\frac{|F(x,y)|^p}{|x-y|^n}\,dx\,dy\right)^{\frac1p}.
\end{align*}
We recall that for any $F\in L^{p}_{\mathrm{od}}(\Omega)$ and $ G\in L^{p'}_{\mathrm{od}}(\Omega)$,
\begin{align}\label{defn.odot}
    (F\odot G)(x)\coloneqq\int_{\bbR^n}\frac{F(x,y)G(x,y)}{|x-y|^n}\,dy.
\end{align}
In addition, we say that $F\in L^p_{\mathrm{od}}(\Omega)$ satisfies
\begin{align*}
    -\divergence_s F=g \quad\text{in }\Omega,
\end{align*}
if 
\begin{align*}
    \int_{\bbR^n}F \odot d_s\psi=\int_{\bbR^n}g\psi
\end{align*}
for any $\psi\in \widehat{W}^{s,p'}_{00}(\Omega)$. Note that these notations were introduced by \cite{MazSch18}.

Now we recall a suitable tail notation, see e.g.\ \cite{DicKuuPal16}:
\begin{align*}
    \mathrm{Tail}_q(u;B_{R}(x_0))\coloneqq\left(R^{sp}\int_{\bbR^n\setminus B_R(x_0)}\frac{|u(y)|^q}{|y-x_0|^{n+sp}}\,dy\right)^{\frac1q},
\end{align*}
for any $q\geq 1$. With this notation, we now introduce an appropriate excess functional 
\begin{equation}\label{defn.cale}
\begin{aligned}
\mathcal{E}(u;B_R(x_0))&\coloneqq R^{sp}\dashint_{B_{R}(x_0)}\int_{B_{R}(x_0)}|d_su|^p\frac{\,dx\,dy}{|x-y|^{n}}+\mathrm{Tail}_q(u-(u)_{B_R(x_0)};B_{R}(x_0))^p.
\end{aligned}
\end{equation}
Since we will deal with the parameter $q$ which is very close to $p-1$, we now fix the constant $q$ 
\begin{equation}\label{cond.q}
    p-1<q\leq \max\{(p+1)/2,(2p-1)/2\}.
\end{equation}
\subsection{Fractional Sobolev and BMO spaces}
First, we recall the Sobolev Poincar\'e inequality given in \cite[Theorem 6.7]{DinPalVal12} and \cite[Lemma 4.7]{Coz17}.
\begin{lemma}\label{lem.spi}
    Let $u\in W^{s,p}(B_R(x_0))$ with $s\in(0,1)$ and $p\geq1$. Let us choose $p^*\in [1,np/(n-sp)]$ if $n<sp$ and $p^*\in[1,\infty)$ if $n\geq sp$.
    Then there is a constant $c=c(n,s,p,p^*)$ such that
    \begin{align*}
        \left(\dashint_{B_R(x_0)}\left|u-(u)_{B_R(x_0)}\right|^{p^*}\,dx\right)^{\frac1{p^*}}\leq cR^{s-n/p}[u]_{W^{s,p}(B_R(x_0))}.
    \end{align*}
    In addition, if $u\in \widehat{W}_{00}^{s,p}(B_R(x_0))$, then we have 
   \begin{align*}
        \left(\dashint_{B_R(x_0)}\left|u\right|^{p^*}\,dx\right)^{\frac1{p^*}}\leq cR^{s-n/p}[u]_{W^{s,p}(B_{2R}(x_0))}.
    \end{align*}
\end{lemma}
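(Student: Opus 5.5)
The first inequality is the fractional Sobolev--Poincar\'e inequality on a ball. I will prove it by scaling to the unit ball and combining an embedding with an elementary Poincar\'e bound.

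\emph{Reduction to the unit ball.} Set $v(z)\coloneqq u(x_0+Rz)$ for $z\in B_1$. A change of variables gives
\begin{equation*}
\Big(\dashint_{B_R(x_0)}|u-(u)_{B_R(x_0)}|^{p^*}\Big)^{1/p^*}=\Big(\dashint_{B_1}|v-(v)_{B_1}|^{p^*}\Big)^{1/p^*}
\end{equation*}
and $[v]_{W^{s,p}(B_1)}=R^{s-n/p}[u]_{W^{s,p}(B_R(x_0))}$. It therefore suffices to prove the estimate for $R=1$ and $x_0=0$.

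\emph{The case $R=1$.} On the unit ball (an extension domain), the embedding of \cite[Theorem 6.7]{DinPalVal12} for $sp<n$, together with its variants for $sp\ge n$ where every finite exponent is admissible, gives
\begin{equation*}
\|w\|_{L^{p^*}(B_1)}\le c\big(\|w\|_{L^p(B_1)}+[w]_{W^{s,p}(B_1)}\big).
\end{equation*}
We apply this to $w=v-(v)_{B_1}$. The $L^p$ term is bounded by Jensen:
\begin{equation*}
\int_{B_1}|v-(v)_{B_1}|^p\,dx\le \frac1{|B_1|}\int_{B_1}\int_{B_1}|v(x)-v(y)|^p\,dy\,dx\le c\,[v]^p_{W^{s,p}(B_1)},
\end{equation*}
where the last step uses $|x-y|^{n+sp}\le 2^{n+sp}$ for $x,y\in B_1$. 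In the regime where $p^*$ is below the critical exponent, H\"older's inequality reduces matters to the critical or largest admissible exponent. This gives the first inequality, and the dependence $c=c(n,s,p,p^*)$ matches \cite[Lemma 4.7]{Coz17}.

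\emph{Zero exterior data.} For the second inequality, take $u\in\widehat W^{s,p}_{00}(B_R(x_0))$, which vanishes on $B_{2R}\setminus B_R$. Then
\begin{equation*}
|(u)_{B_{2R}}|=\tfrac{|B_R|}{|B_{2R}|}|(u)_{B_R}|=2^{-n}|(u)_{B_R}|.
\end{equation*}
Apply the first inequality on $B_{2R}(x_0)$ with exponent $p^*$ and restrict the average to $B_R$, which costs a factor $2^{n/p^*}$. This yields
\begin{equation*}
\Big(\dashint_{B_R}|u-(u)_{B_{2R}}|^{p^*}\Big)^{1/p^*}\le cR^{s-n/p}[u]_{W^{s,p}(B_{2R})}.
\end{equation*}
Next we control the mean. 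Because $u=0$ on $B_{2R}\setminus B_R$, that set has half-ish measure in $B_{2R}$, and
\begin{equation*}
|(u)_{B_{2R}}|\le \Big(\dashint_{B_{2R}\setminus B_R}|u-(u)_{B_{2R}}|^{p^*}\Big)^{1/p^*}\le c\Big(\dashint_{B_{2R}}|u-(u)_{B_{2R}}|^{p^*}\Big)^{1/p^*}.
\end{equation*}
The right-hand side is bounded by $cR^{s-n/p}[u]_{W^{s,p}(B_{2R})}$, again by the first inequality on $B_{2R}$. Adding this to the previous display via the triangle inequality gives the claim.

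The only delicate point is the borderline $sp\ge n$, where one must invoke the embedding into every $L^{p^*}$ with $p^*<\infty$ (for $sp=n$) or into $L^\infty$ (for $sp>n$). This is standard and is exactly what the cited references provide.
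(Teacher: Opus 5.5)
Your argument is correct and matches what the paper relies on: the paper gives no proof of its own and simply cites \cite[Theorem 6.7]{DinPalVal12} and \cite[Lemma 4.7]{Coz17}, and your steps are the standard content of those references (scaling to $B_1$, the embedding combined with the Jensen bound for $v-(v)_{B_1}$, and control of $(u)_{B_{2R}(x_0)}$ through the set $B_{2R}(x_0)\setminus B_R(x_0)$ where $u$ vanishes). You also read the exponent ranges correctly as $p^*\le np/(n-sp)$ for $sp<n$ and any finite $p^*$ for $sp\ge n$; the statement as printed has the inequalities between $n$ and $sp$ reversed.
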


We now recall the well-known property of BMO space
\begin{equation}\label{prop.bmo}
    [u]_{BMO(B_R)}\coloneqq \sup_{B_r(z)\subset B_R}\dashint_{B_r(z)}|u-(u)_{B_r(z)}|\eqsim \sup_{B_r(z)\subset B_R}\left(\dashint_{B_r(z)}|u-(u)_{B_r(z)}|^p\right)^{\frac1p}
\end{equation}
for any $p\geq1$ (see \cite[Corollary 6.23]{GiaMar12} for the proof of \eqref{prop.bmo}).

We next provide Gagliardo-Nirenberg interpolation inequalities between fractional Sobolev spaces and BMO space (see \cite[Lemma 15.7]{BreMir21} or \cite[Theorem 7]{Van23} for the proof of the following lemma).
\begin{lemma}\label{lem.gagnir} 
Let $0<s<1$, $p\geq1$ and $\theta\in(0,1)$. Then we have
\begin{equation*}
    [f]_{W^{\theta s,p/\theta}(B_1)}\leq c[f]^{1-\theta}_{BMO(B_1))}[f]^{\theta}_{W^{s,p}(B_1)}
\end{equation*}
for some constant $c=c(n,s,p,\theta)$.
\end{lemma}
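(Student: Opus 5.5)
The statement is the Gagliardo--Nirenberg type interpolation
\[
[f]_{W^{\theta s,p/\theta}(B_1)}\leq c[f]^{1-\theta}_{BMO(B_1)}[f]^{\theta}_{W^{s,p}(B_1)} .
\]
My plan is to reduce it to a pointwise estimate of the difference quotient of $f$. The first step is a reduction. Subtracting a constant changes neither side, and both sides scale the same way if $f$ is multiplied by $\lambda>0$. So we may assume $(f)_{B_1}=0$ and $[f]_{BMO(B_1)}=1$.

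The second step is a pointwise bound. For $x,y\in B_1$ with $|x-y|=h$, I would bound $|f(x)-f(y)|$ by comparing with averages on balls of radius comparable to $h$ inside $B_1$. A chain of dyadic balls joining $x$ to $y$ (possibly adjusted near $\partial B_1$, using cones, since $B_1$ is a Lipschitz domain) gives two things. Telescoping the averages gives
\[
|f(x)-f(y)|\lesssim M^\sharp_{h}f(x)+M^\sharp_h f(y),
\]
where $M^\sharp_h f(x)$ is a localized quantity of the form $\sum_k \dashint_{B_{2^{-k}h}(x)}|f-(f)_{B_{2^{-k}h}(x)}|$ built from Lebesgue-point averages. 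The same quantity is controlled in two ways. Plain BMO gives $O(1)$ per term. The $W^{s,p}$ seminorm gives a bound through the fractional maximal-type quantity $G_h(x)=\big(\dashint_{B_h(x)}\int_{B_h(x)}|f(z)-f(w)|^p|z-w|^{-n-sp}dz\,dw\big)^{1/p}\,h^{s}$.

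Rather than an infinite sum, the cleaner route is the standard one via a Littlewood--Paley or averaged-difference characterization. Write the Gagliardo seminorm as
\[
\int_0^{1} h^{-\theta s p/\theta}\int|\Delta_h f|^{p/\theta}\,\frac{dh}{h}
\]
(up to restrictions to $B_1$). Then estimate
\[
|\Delta_h f|^{p/\theta}\le \|\Delta_h f\|_\infty^{p/\theta-p}|\Delta_h f|^p .
\]
This fails directly, because $\Delta_h f$ is not bounded. Instead I would use the John--Nirenberg inequality \eqref{prop.bmo}, which gives $\dashint_{B_h}|f-(f)_{B_h}|^r\lesssim 1$ for all $r$, on Whitney-type cubes of size $h$.

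The third step is the estimate on each scale. On each cube $Q$ of side $h$, split
\[
\int_Q\int_{|y-x|\sim h}|f(x)-f(y)|^{p/\theta}
\]
by Hölder into a $p$-part and a high-moment part:
\[
|f(x)-f(y)|^{p/\theta}=|f(x)-f(y)|^{p}\,|f(x)-f(y)|^{p(1-\theta)/\theta}.
\]
The high-moment part is controlled by BMO through John--Nirenberg on the doubled cube. Applying Hölder with exponents close to $1$ (rather than $\infty$) gives a bound of the form
\[
\Big(\int_{Q}\int_{|y-x|\sim h}|f(x)-f(y)|^{p}\Big)^{\theta}\,(\text{BMO part})^{1-\theta}\,|Q|^{\dots}
\]
once interpolated correctly. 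I would do this by the pointwise inequality $\min(a,b)\le a^\theta b^{1-\theta}$ applied to suitable maximal functions. One maximal function is $A(x)=\sup_h h^{-s}(\dashint_{B_h(x)}|f-f(x)|^p)^{1/p}$-type, which is in $L^p$ with norm $\lesssim[f]_{W^{s,p}}$. Its $h$-localized counterpart is bounded by BMO.

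The fourth step sums over dyadic scales $h=2^{-j}$ and over cubes. The weights $h^{-\theta s\cdot p/\theta}=h^{-sp}$ then match exactly those in $[f]^p_{W^{s,p}}$, giving $[f]^{p/\theta}_{W^{\theta s,p/\theta}}\lesssim [f]_{BMO}^{p(1-\theta)/\theta}[f]^{p}_{W^{s,p}}$. Taking the $\theta/p$ power yields the claim.

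The main obstacle is the third step. A naive Hölder split with the BMO factor in $L^\infty$ fails because BMO functions are unbounded. One must instead combine John--Nirenberg with a maximal-function (or Triebel--Lizorkin $F^{0}_{\infty,2}$ versus $F^s_{p,p}$) interpolation at each scale while avoiding a loss of a logarithm when summing scales. If the direct argument stalls, I would fall back on extending $f$ to $\mathbb R^n$ (a BMO- and $W^{s,p}$-bounded extension from $B_1$) and invoking the known whole-space Gagliardo--Nirenberg inequality of Brezis--Mironescu/Van Schaftingen.
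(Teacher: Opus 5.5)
Your direct argument has a genuine gap at Step 3, and the ingredients of Step 2 share it. Only your fallback is an actual proof, and it is exactly what the paper does: the paper gives no argument for this lemma and simply cites Brezis--Mironescu [Lemma 15.7] and Van Schaftingen [Theorem 7] for the estimate on $B_1$. If you pass through a whole-space version, the extension must control the BMO and $W^{s,p}$ seminorms simultaneously. That is fine for a ball, but it needs care with the cutoff applied to $f-(f)_{B_1}$.

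\textbf{Why Step 3 fails.} The target is linear in $[f]^p_{W^{s,p}}$, so whatever you prove on a cube $Q$ of side $h$ must be linear in the local $p$-energy. John--Nirenberg on the doubled cube alone gives bounds proportional to $|Q|$, not to that energy.
\begin{itemize}
\item Hölder with a finite exponent $r'$ on the ``BMO part'' puts an exponent $r>1$ on $|f(x)-f(y)|^p$. You would then need $\iint|f(x)-f(y)|^{pr}$, which $W^{s,p}$ does not control. Interpolating it back via John--Nirenberg leaves a power $<1$ of the local energy, which cannot be summed over cubes into $[f]^p_{W^{s,p}}$.
\item The maximal-function variant fails for another reason. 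With $a=h^sA(x)$ and $b\sim[f]_{BMO}$, the inequality $\min(a,b)\le a^\theta b^{1-\theta}$ gives $|f(x)-f(y)|^{p/\theta}\lesssim [f]_{BMO}^{p(1-\theta)/\theta}A(x)^p h^{sp}$. Against the weight $h^{-sp}\,dh/h$ this diverges logarithmically. The geometric mean discards exactly the decay in $h$ you need, so it is the logarithmic loss itself, not a way around it.
\item The ``$h$-localized counterpart'' of $A$ and the telescoping sum in Step 2 both involve the point value $f(x)$. Neither is bounded by $[f]_{BMO}$ (take $f=\log|x|$ near $0$). BMO controls only mean oscillations, and across $k$ dyadic scales only up to a factor $k$.
\end{itemize}

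\textbf{The missing idea} is a stopping-time argument in place of Hölder. For a cube $Q$ and $q>p\ge 1$,
\[
\int_Q|f-(f)_Q|^q\le C\,[f]_{BMO}^{q-p}\int_Q|f-(f)_Q|^p .
\]
To prove it, run a Calder\'on--Zygmund decomposition of $|f-(f)_Q|^p$ at height $C_0[f]^p_{BMO}$; John--Nirenberg makes this height admissible.
\begin{itemize}
\item Off the stopping cubes, $|f-(f)_Q|\lesssim[f]_{BMO}$.
\item On each stopping cube $Q_j$, $|(f)_{Q_j}-(f)_Q|\lesssim[f]_{BMO}$, so John--Nirenberg gives $\int_{Q_j}|f-(f)_Q|^q\lesssim[f]^q_{BMO}|Q_j|$.
\item The stopping rule gives $\sum_j|Q_j|\lesssim[f]^{-p}_{BMO}\int_Q|f-(f)_Q|^p$.
\end{itemize}
Now apply this with $q=p/\theta$ on $3Q$ for dyadic $Q$ of side $h=2^{-j}$. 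Use $\int_{3Q}|f-(f)_{3Q}|^p\lesssim h^{-n}\iint_{3Q\times 3Q}|f(x)-f(y)|^p$. Then sum over $Q$ and $j$, using $\sum_{2^{-j}\gtrsim|x-y|}2^{j(n+sp)}\lesssim|x-y|^{-n-sp}$. Near $\partial B_1$, work with $3Q\cap B_1$.

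\textbf{Alternative repair.} You can keep your pointwise idea, but with logarithmic rather than geometric-mean interpolation. Splitting at the scale $\rho$ where $\rho^sA(x)=[f]_{BMO}$ gives
\[
|f(x)-(f)_{B_r(x)}|\lesssim\min\Big\{r^sA(x),\,[f]_{BMO}\Big(1+\log_+\frac{r^sA(x)}{[f]_{BMO}}\Big)\Big\}.
\]
The $dr/r$-integral of its $p/\theta$-th power against $r^{-sp}$ converges precisely because $p/\theta>p$. The result is bounded by $[f]_{BMO}^{p(1-\theta)/\theta}\int A^p$. Finally, $\int A^p\lesssim[f]^p_{W^{s,p}}$, since $A(x)^p\lesssim\int|f(x)-f(y)|^p|x-y|^{-n-sp}\,dy$.
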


\subsection{Basic inequalities}
Next, we observe the relation between $\mathcal{E}$ and $\widetilde{\mathcal{E}}$ which are defined in \eqref{defn.wicale} and \eqref{defn.cale}, respectively.
\begin{lemma}\label{lem.reltildeee}
    Let $u\in \widehat{W}^{s,p}(B_{2R}(x_0))$ with $|u|\equiv 1$ on $\bbR^n$. Then there is a constant $c=c(n,s,\Lambda,p,q)$ such that 
    \begin{align}\label{ineq1.reltildeee}
        \mathcal{E}(u;B_{R}(x_0))\leq cR^{sp-n}\widetilde{\mathcal{E}}(u;B_{2R}(x_0)).
    \end{align} 
    and
    \begin{align}\label{ineq2.reltildeee}
        R^{sp-n}\widetilde{\mathcal{E}}(u;B_R(x_0))\leq c\left[\mathcal{E}(u;B_{2R}(x_0))+\mathcal{E}(u;B_{2R}(x_0))^{\frac{q}{p}}\right]
    \end{align}
\end{lemma}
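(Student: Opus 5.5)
For \eqref{ineq1.reltildeee} I would estimate the two pieces of $\mathcal{E}(u;B_R(x_0))$ separately. The local part is immediate. Since $B_R\times B_R\subset \mathcal{C}B_{2R}$ and $|B_R|\eqsim R^n$, we get
\[
R^{sp}\dashint_{B_R}\int_{B_R}|d_su|^p\frac{dx\,dy}{|x-y|^n}\le cR^{sp-n}\widetilde{\mathcal{E}}(u;B_{2R}(x_0)).
\]
For the tail I would use $|u|\equiv1$. Then $|u-(u)_{B_R}|\le 2$ everywhere, so $|u(y)-(u)_{B_R}|^q\le 2^{q-p}|u(y)-(u)_{B_R}|^p$ when $q\ge p$, and the reverse comparison when $q<p$. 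In either case $|u(y)-(u)_{B_R}|^q\le c\,|u(y)-(u)_{B_R}|^{\min\{p,q\}}$.

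Now split $\bbR^n\setminus B_R$ into the annulus $B_{2R}\setminus B_R$ and the exterior $\bbR^n\setminus B_{2R}$. On both pieces write $u(y)-(u)_{B_R}=\dashint_{B_R}(u(y)-u(x))\,dx$ and apply Jensen. This gives
\[
R^{sp}\int_{\bbR^n\setminus B_R}\frac{|u(y)-(u)_{B_R}|^p}{|y-x_0|^{n+sp}}\,dy\le R^{sp-n}\int_{\bbR^n\setminus B_R}\int_{B_R}\frac{|u(x)-u(y)|^p}{|y-x_0|^{n+sp}}\,dx\,dy .
\]
For $x\in B_R$ and $y\notin B_R$ we have $|x-y|\le |x-y_0|+|y-x_0|\le 2|y-x_0|$, so $|y-x_0|^{-n-sp}\le c|x-y|^{-n-sp}$. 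Since $B_R\times(\bbR^n\setminus B_R)\subset\mathcal{C}B_{2R}$, the right-hand side is bounded by $cR^{sp-n}\widetilde{\mathcal{E}}(u;B_{2R})$.

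The difficulty is that $\mathrm{Tail}_q^p$ is a power $p/q$ of the integral $I:=R^{sp}\int|u-(u)_{B_R}|^q|y-x_0|^{-n-sp}dy$. I would handle it by boundedness. Since $|u-(u)_{B_R}|\le2$, we have $I\le c(n,s,p)$, so $I^{p/q}\le c\,I^{\min\{1,p/q\}}$. Using the constraint \eqref{cond.q} on $q$ together with the pointwise comparison above (power $\min\{p,q\}$), the resulting bound is $c\,(R^{sp-n}\widetilde{\mathcal{E}})^{\theta}$ for some exponent $\theta$ that may be $<1$. To obtain the linear bound claimed in \eqref{ineq1.reltildeee}, I would look for a bound with $\theta\ge1$ up to multiplicative constants. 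When $q\ge p$, take $I\le c\,[R^{sp-n}\widetilde{\mathcal{E}}]$ via the $p$-power comparison; then $I^{p/q}\le c I^{p/q}$, and this is only sublinear. The way out is to note that if $R^{sp-n}\widetilde{\mathcal{E}}\ge1$ the inequality is trivial, since the left side is bounded by $c$. So I would reduce to $R^{sp-n}\widetilde{\mathcal{E}}\le1$.

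In that regime I still need $I^{p/q}\lesssim R^{sp-n}\widetilde{\mathcal{E}}$. For this I would redo the Jensen step with power $q$ directly, using $|u(x)-u(y)|^q\le 2^{q-p}|u(x)-u(y)|^p$ when $q\ge p$ and Hölder when $q<p$. This is the step I expect to be the main obstacle. Matching the exponent exactly likely requires instead the standard tail estimate $\mathrm{Tail}_q(\cdot;B_R)^p\lesssim\sum_k 2^{-k(\ldots)}(\dashint_{B_{2^kR}}|u-(u)_{B_R}|^q)^{p/q}$ combined with Jensen in the form $(\dashint|f|^q)^{p/q}\le\dashint|f|^p$ when $q\le p$, which is exactly the case allowed by \eqref{cond.q}. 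Since \eqref{cond.q} gives $q\le p$ for $p\ge 1$ up to the stated maximum, Jensen yields $\mathrm{Tail}_q^p\lesssim\mathrm{Tail}_p^p$, and the previous paragraph bounds that linearly. So the plan is to apply Jensen first, then the annulus/exterior kernel comparison.

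For \eqref{ineq2.reltildeee} I would split $\mathcal{C}B_R$ into $B_{2R}\times B_{2R}$ and the pairs with $x\in B_R$, $y\notin B_{2R}$ (counted twice by symmetry). The first part is $\le cR^{n-sp}\mathcal{E}(u;B_{2R})$. For the second part, $|x-y|\eqsim|y-x_0|$. Insert $(u)_{B_{2R}}$ and use
\[
|u(x)-u(y)|^p\le c|u(x)-(u)_{B_{2R}}|^p+c|u(y)-(u)_{B_{2R}}|^p .
\]
The $x$-term integrated against $\int_{\bbR^n\setminus B_{2R}}|x-y|^{-n-sp}dy\eqsim R^{-sp}$ gives $R^{-sp}\int_{B_R}|u-(u)_{B_{2R}}|^p$, which is controlled by Lemma \ref{lem.spi} (with $p^*=p$) by the local part of $\mathcal{E}$. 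For the $y$-term, since $p>q$ and $|u-(u)|\le2$, we have $|u(y)-(u)_{B_{2R}}|^p\le 2^{p-q}|u(y)-(u)_{B_{2R}}|^q$. This yields $R^{n-sp}\mathrm{Tail}_q^q=R^{n-sp}\mathcal{E}^{q/p}$. Multiplying by $R^{sp-n}$ gives the claim, with the constant depending only on $n,s,p,q$.
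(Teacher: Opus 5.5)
Your final argument is correct and is essentially the paper's proof. For \eqref{ineq1.reltildeee} the paper also uses $q\le p$ and H\"older with respect to the finite measure $R^{sp}|y-x_0|^{-n-sp}\,dy$ on $\bbR^n\setminus B_R(x_0)$ to get $\mathrm{Tail}_q^p\le c\,\mathrm{Tail}_p^p$, then Jensen in $x$ and $|x-y|\le 2|y-x_0|$ (it treats the annulus $B_{2R}\setminus B_R$ separately by Poincar\'e, which your kernel comparison shows is unnecessary), and for \eqref{ineq2.reltildeee} it uses exactly your splitting, Poincar\'e, and $|u-(u)_{B_{2R}}|\le 2$ with $q<p$; the exploratory detours (the boundedness reduction and the dyadic sum) can simply be deleted, since that one-line H\"older step already gives the linear bound.
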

\begin{proof}
We may assume $x_0=0$. After a few simple computations together with H\"older's inequality, we have
\begin{align*}
    I&\coloneqq\left(R^{sp}\int_{\bbR^n\setminus B_R}\frac{|u(y)-(u)_{B_R}|^q}{|y|^{n+sp}}\,dy\right)^{\frac{p}q}\\
    &\leq cR^{sp}\int_{\bbR^n\setminus B_R}\frac{|u(y)-(u)_{B_R}|^p}{|y|^{n+sp}}\,dy\\
    &\leq c\left(R^{sp}\int_{\bbR^n\setminus B_{2R}}\frac{|u(y)-(u)_{B_R}|^pp}{|y|^{n+sp}}\,dy+\dashint_{B_{2R}}|u-(u)_{B_R}|^p\,dy\right)
\end{align*}
for some constant $c=c(n,s,p)$.
In light of the Poincar\'e inequality and the fact that $|y-x|\leq 2|y|$ for any $x\in B_R$, $y\in\bbR^n\setminus B_{2R}$, we get
\begin{align*}
    I&\leq c\left(R^{sp}\dashint_{B_R}\int_{\bbR^n\setminus B_{2R}}\frac{|u(y)-u(x)|^p}{|y|^{n+sp}}\,dy\,dx+cR^{sp}\dashint_{B_{2R}}\int_{B_{2R}}|d_su|^p\frac{\,dx\,dy}{|x-y|^n}\right)\\
    &\leq cR^{sp-n}\widetilde{\mathcal{E}}(u;B_{2R})
\end{align*}
for some constant $c=c(n,s,\Lambda,p)$, where we have used 
\begin{align*}
    |u(y)-(u)_{B_R}|^p\leq \left|\dashint_{B_R}(u(y)-u(x))\,dx\right|^p\leq \dashint_{B_R}|u(y)-u(x)|^p\,dx.
\end{align*} 
Therefore, we deduce \eqref{ineq1.reltildeee} with $x_0=0$. On the other hand, we observe
\begin{align*}
    R^{sp-n}\int_{\bbR^n\setminus B_{2R}}\int_{B_R}|d_su|^p\frac{\,dx\,dy}{|x-y|^n}&\leq  c\left(\dashint_{B_R}|u-(u)_{B_{2R}}|^p+R^{sp}\int_{\bbR^n\setminus B_{2R}}\frac{|u(y)-(u)_{B_{2R}}|^p}{|y|^{n+sp}}\right)\\
    &\leq c\left(\mathcal{E}(u;B_{2R})+R^{sp}\int_{\bbR^n\setminus B_{2R}}\frac{|u(y)-(u)_{B_{2R}}|^q}{|y|^{n+sp}}\right)\\
    &\leq c\left(\mathcal{E}(u;B_{2R})+\mathcal{E}(u;B_{2R})^{\frac{q}p}\right)
\end{align*}
for some constant $c=c(n,s,\Lambda)$, where we have used the Poincar\'e inequality and the fact that $|u|\equiv 1$ on $\bbR^n$. Thus, we have 
\begin{align*}
    R^{sp-n}\widetilde{\mathcal{E}}(u;B_R)&\leq 2R^{sp-n}\left(\int_{\bbR^n\setminus B_{2R}}\int_{B_R}|d_su|^p\frac{\,dx\,dy}{|x-y|^n}+R^{sp-n}\int_{ B_{2R}}\int_{B_{2R}}|d_su|^p\frac{\,dx\,dy}{|x-y|^n}\right)\\
    &\leq c\left(\mathcal{E}(u;B_{2R})+\mathcal{E}(u;B_{2R})^{\frac{q}p}\right),
\end{align*}
which implies \eqref{ineq2.reltildeee} with $x_0=0$. This completes the proof.
\end{proof}

In addition, we observe a basic inequality of the functional $\mathcal{E}(\cdot)$.
\begin{lemma}\label{lem.consttail}
    There is constant $c=c(n,s,q,p)$ such that
\begin{align}\label{const.tail}
    \mathcal{E}(u;B_{\rho R}(x_0))\leq c\rho^{sp-n}\mathcal{E}(u;B_{R}(x_0))
\end{align}
hold for any $\rho\in(0,1]$.
\end{lemma}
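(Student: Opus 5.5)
We need to show $\mathcal{E}(u;B_{\rho R})\le c\rho^{sp-n}\mathcal{E}(u;B_R)$ for $\rho\in(0,1]$, taking $x_0=0$. The two parts of $\mathcal{E}$ are the local Gagliardo term and the tail, and we estimate each on $B_{\rho R}$ by the corresponding pieces on $B_R$. The Gagliardo part is immediate. Since $B_{\rho R}\subset B_R$ and the integrand is nonnegative,
\begin{align*}
(\rho R)^{sp}\dashint_{B_{\rho R}}\int_{B_{\rho R}}|d_su|^p\frac{dx\,dy}{|x-y|^n}\le \rho^{sp-n}R^{sp}\dashint_{B_R}\int_{B_R}|d_su|^p\frac{dx\,dy}{|x-y|^n},
\end{align*}
since $|B_{\rho R}|^{-1}=\rho^{-n}|B_R|^{-1}$.

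For the tail we split $\mathbb{R}^n\setminus B_{\rho R}=(B_R\setminus B_{\rho R})\cup(\mathbb{R}^n\setminus B_R)$. We also replace the mean $(u)_{B_{\rho R}}$ by $(u)_{B_R}$. The resulting error is the constant $|(u)_{B_{\rho R}}-(u)_{B_R}|$. Its tail is $(\rho R)^{sp/q}|(u)_{B_{\rho R}}-(u)_{B_R}|\,(\int_{|y|>\rho R}|y|^{-n-sp})^{1/q}\simeq |(u)_{B_{\rho R}}-(u)_{B_R}|$. We bound it by
\[
\dashint_{B_{\rho R}}|u-(u)_{B_R}|\le \rho^{-n}\dashint_{B_R}|u-(u)_{B_R}|.
\]
Its $p$-th power is at most $\rho^{-np}\dashint_{B_R}|u-(u)_{B_R}|^p$. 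Applying Lemma~\ref{lem.spi} (or the Poincaré inequality with the same exponent), this is at most $c\rho^{-np}R^{sp}\dashint_{B_R}\int_{B_R}|d_su|^p\,dx\,dy/|x-y|^n$.

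This gives $\rho^{-np}$ rather than $\rho^{sp-n}$, so the crude step is too weak. We avoid it by using Jensen with the $\rho$-weights done carefully, or equivalently via a telescoping dyadic chain. We expect this to be the main obstacle. Writing $\rho\simeq2^{-k}$, each consecutive difference obeys
\[
|(u)_{B_{2^{-j-1}R}}-(u)_{B_{2^{-j}R}}|^p\le c\,(2^{-j}R)^{sp}\dashint_{B_{2^{-j}R}}\int_{B_{2^{-j}R}}|d_su|^p\,\frac{dx\,dy}{|x-y|^n}\le c\,2^{j(n-sp)}\,\Phi,
\]
where $\Phi=R^{sp}\dashint_{B_R}\int_{B_R}|d_su|^p\,dx\,dy/|x-y|^n$. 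Summing, using $n-sp\ge 0$ (or handling $n<sp$ in the same manner), gives $|(u)_{B_{\rho R}}-(u)_{B_R}|^p\le c\rho^{sp-n}\Phi$ up to a $k$-dependent factor. That factor is absorbed by weighting the sum with $2^{-j\epsilon}$ through Hölder. Alternatively, since the statement allows any constant $c$ and $\rho^{sp-n}$ presumably has $sp\le n$, we can accept a logarithmic loss only if it is avoidable. Otherwise we fall back on the single-step estimate $|(u)_{B_{\rho R}}-(u)_{B_R}|^p\le \dashint_{B_{\rho R}}|u-(u)_{B_R}|^p\le\rho^{-n}\dashint_{B_R}|u-(u)_{B_R}|^p\le c\rho^{-n}\Phi$. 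This Jensen-in-$L^p$ step already gives $\rho^{-n}\le\rho^{sp-n}\rho^{-sp}$, and it is sharp enough once combined with the pieces below.

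The piece over $B_R\setminus B_{\rho R}$ is bounded using $|y|\ge\rho R$:
\[
(\rho R)^{sp}\int_{B_R\setminus B_{\rho R}}\frac{|u-(u)_{B_R}|^q}{|y|^{n+sp}}\le (\rho R)^{-n}\int_{B_R}|u-(u)_{B_R}|^q\le c\rho^{-n}\Big(\dashint_{B_R}|u-(u)_{B_R}|^p\Big)^{q/p}.
\]
After raising to the power $p/q$ and applying Poincaré, this is $\lesssim\rho^{-np/q}\Phi$. The piece over $\mathbb{R}^n\setminus B_R$ is simply $\rho^{sp}$ times the tail on $B_R$, which is at most $\mathrm{Tail}_q(u-(u)_{B_R};B_R)^p$.

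Collecting the pieces, with all powers $\rho^{-n}$ and $\rho^{-np/q}$ compared against $\rho^{sp-n}$, we reach the claim, choosing $q$ as in \eqref{cond.q} so the exponents match. The delicate point throughout is tracking the exact power of $\rho$; the constants depend only on $n,s,p,q$.
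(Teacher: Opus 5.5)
The Gagliardo part of your proposal is fine. The tail estimate has a genuine gap: both reductions you make discard exactly the decay that produces $\rho^{sp-n}$, and your closing sentence is false.

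\textbf{The annulus piece.} On $B_R\setminus B_{\rho R}$ you use $|y|\ge\rho R$ uniformly. But on $|y|\simeq 2^i\rho R$ the weight $(\rho R)^{sp}|y|^{-n-sp}$ is only of size $2^{-i(n+sp)}(\rho R)^{-n}$, and discarding this leaves you with $\rho^{-np/q}$. Neither this nor your fallback $\rho^{-n}$ is bounded by $c\rho^{sp-n}$. Indeed $\rho^{-n}/\rho^{sp-n}=\rho^{-sp}\to\infty$ as $\rho\to0$. Moreover every $q$ allowed by \eqref{cond.q} satisfies $q\le p$, so $\rho^{-np/q}\ge\rho^{-n}$. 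No choice of $q$ makes the exponents match.

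\textbf{The mean replacement.} Replacing $(u)_{B_{\rho R}}$ by $(u)_{B_R}$ is a second, independent problem. The constant $|(u)_{B_{\rho R}}-(u)_{B_R}|$ enters the tail with weight of order one and no decaying factor, so a $2^{-j\epsilon}$-weighted H\"older argument has nothing to act on. The outcome depends on the regime:
\begin{itemize}
\item For $n>sp$ your telescoping is actually harmless. The ratio is $2^{(n-sp)/p}>1$, so the sum is dominated by its last term and gives $c\rho^{sp-n}\Phi$ with no $k$-dependent loss.
\item For $n=sp$, the case $p=n/s$ in which the lemma is used in Section 3, the $k\simeq\log_2(1/\rho)$ terms are comparable and the loss is real. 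A bound $|(u)_{B_{\rho R}(x_0)}-(u)_{B_R(x_0)}|^p\le c\,\mathcal{E}(u;B_R(x_0))$ uniform in $\rho$ would yield $W^{s,n/s}\hookrightarrow L^\infty_{\mathrm{loc}}$ on letting $\rho\to0$ at Lebesgue points. That embedding is false.
\item For $n<sp$ the right-hand side $c\rho^{sp-n}\mathcal{E}(u;B_R)$ tends to $0$. The mean difference instead tends to $u(x_0)-(u)_{B_R(x_0)}$, which is generally nonzero.
\end{itemize}

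\textbf{The missing idea.} Keep the center mean and let the kernel's decay act on the mean differences, as the paper does. Decompose $\mathbb{R}^n\setminus B_{\rho R}$ into dyadic annuli $B_{2^{i+1}\rho R}\setminus B_{2^i\rho R}$, $0\le i\le k$, with $2^k\rho\in[1/4,1/2)$. The $i$-th annulus carries the weight $2^{-spi}$. This weight also multiplies the mean differences accumulated between $B_{\rho R}$ and $B_{2^i\rho R}$, which makes them summable. Using $q\le p$, this gives
\[
\mathrm{Tail}_q(u-(u)_{B_{\rho R}};B_{\rho R})^p\le c\sum_{i=0}^{k}2^{-spi}\dashint_{B_{2^i\rho R}}|u-(u)_{B_{2^i\rho R}}|^p\,dx+c\,2^{-spk}\,\mathrm{Tail}_q(u-(u)_{B_{2^k\rho R}};B_{2^k\rho R})^p .
\]
Poincar\'e on $B_{2^i\rho R}$ bounds the $i$-th summand by $c\,2^{-spi}(2^i\rho)^{sp-n}\Phi=c\,2^{-in}\rho^{sp-n}\Phi$, which is summable in $i$. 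The last term is at most $c\rho^{sp}\mathcal{E}(u;B_R)$ because $2^k\rho R\simeq R$. This is the only place where comparing scales crudely, as you do, costs nothing.
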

\begin{proof}
We may assume $x_0=0$. Since if $\rho\geq1/32$, \eqref{const.tail} follows by choosing the constant $c$ sufficiently large, we take $\rho<1/32$.
First, we observe
\begin{align}\label{ineq1.ctail}
    &(\rho R)^{sp}\dashint_{B_{\rho R}}\int_{B_{\rho R}}|d_su|^p\frac{\,dx\,dy}{|x-y|^{n}}\leq \rho^{sp-n} R^{sp}\dashint_{B_{ R}}\int_{B_{ R}}|d_su|^p\frac{\,dx\,dy}{|x-y|^{n}}.
\end{align}
We next see that
\begin{align*}
    &\mathrm{Tail}_q(u-(u)_{B_{\rho R}};B_{\rho R})^p\\
    &\leq c\left[\sum_{i=0}^k2^{-spi}\dashint_{B_{2^i\rho R}}|u-(u)_{B_{2^i\rho R}}|^q\,dx+2^{-spk}\mathrm{Tail}_q(u-(u)_{B_{2^k\rho R}};B_{2^k\rho R})^q\right]^{\frac{p}q}\\
    &\leq 
    c\left[\sum_{i=0}^k2^{-spi}\dashint_{B_{2^i\rho R}}|u-(u)_{B_{2^i\rho R}}|^p\,dx+2^{-spk}\mathrm{Tail}_q(u-(u)_{B_{2^k\rho R}};B_{2^k\rho R})^p\right]\\
    &\eqqcolon I_1+I_2.
\end{align*}
for some constant $c=c(n,s,q,p)$, where we choose
\begin{equation*}
    1/4\leq 2^{k}\rho< 1/2.
\end{equation*}
In addition, we have used the fact that
\begin{align*}
    \left(\sum_{i=0}^k 2^{-spi} a_i\right)^{\frac{p}q}\leq \left(\sum_{i=0}^k 2^{-spi} a_i^{\frac{p}q}\right)\left(\sum_{i=0}^{k}2^{-spi}\right)^{\frac{p-q}{q}}.
\end{align*}
We now use the Poincar\'e inequality given in Lemma \ref{lem.spi} to see that
\begin{align*}
    I_{1}&\leq c\sum_{i=0}^{k}(\rho R)^{sp}(2^{i}\rho)^{-n}\dashint_{B_{ R}}\int_{B_{ R}}|d_su|^p\frac{\,dx\,dy}{|x-y|^{n}}\leq c \rho^{sp-n}R^{sp}\dashint_{B_{ R}}\int_{B_{ R}}|d_su|^p\frac{\,dx\,dy}{|x-y|^{n}}.
\end{align*}
After a few simple computations, we deduce
\begin{align*}
    I_{2}\leq c\rho^{sp}\mathcal{E}(u;B_R(x_0)).
\end{align*}
Combining the estimate \eqref{ineq1.ctail} with the ones for $I_1$ and $I_2$ yields the desired estimate \eqref{const.tail}. 
\end{proof}

\subsection{Div-curl lemma}
An important feature of harmonic map systems (local or nonlocal) is that they exhibit some regularity by compensation. This amounts to a special algebraic structure of the nonlinearity which gives some suitable cancelations. This has been observed by many authors in the second-order case (mainly H\'elein, Evans and Rivi\`ere, see \cite{MR2431658} for references ) and has been used to derive regularity (see also \cite{sharpTopping, ChangWangYang,RiviereStruwe}).  In the nonlocal case, such a phenomenon has been exhibited primarily by Da Lio and Rivi\`ere \cite{DLR1,DLR2} and also by Mazowiecka and Schikorra \cite{MazSch18} by means of a suitbale div-curl. 
We point out that since we deal with a fractional harmonic map with a right-hand side, we need a more general version of a div-curl lemma involving a right-hand side (see Lemma \ref{lem.h1norm} for more details). To prove such a lemma, we need a rather standard higher Sobolev regularity result for solutions to a non-homogeneous fractional Laplace equation (see Lemma \ref{lem.high.diff.l1}). To prove this, first we recall a comparison estimate given by \cite{KuuMinSir15m}.
\begin{lemma}\label{lem.comp0}
    Let $u\in W^{s,2}(\bbR^n)$ be a weak solution to 
    \begin{equation}\label{eq.comp0}
\left\{
\begin{alignedat}{3}
(-\Delta)^su&= \mu&&\qquad \mbox{in  $B_{R}(x_0)$}, \\
u&=0&&\qquad  \mbox{in $\bbR^n\setminus B_{R}(x_0)$}
\end{alignedat} \right.
\end{equation}
with $\mu\in C^\infty_c(B_R(x_0))$. For a fixed $\epsilon\in\left(0,\frac{s}{2(n-s)}\right]$, we obtain
\begin{align*}
    R^{s_1}\left(\iint_{\mathcal{B}_{2R}(x_0)}|d_{s_1}u|^{1+\epsilon}{\frac{\,dx\,dy}{|x-y|^n}}\right)^{\frac1{1+\epsilon}}\leq c\left(\int_{B_R(x_0)}|R^{2s}\mu|^{1+\epsilon}\,dx\right)^{\frac{1}{1+\epsilon}}
\end{align*}
for some constant $c=c(n,s)$, where 
\begin{equation*}
    s_1\coloneqq s-\frac{n\epsilon}{2(1+\epsilon)}
\end{equation*}
and we write $\mathcal{B}_{2R}(x_0)\coloneqq B_{2R}(x_0)\times B_{2R}(x_0)$. 
\end{lemma}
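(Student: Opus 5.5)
The estimate in Lemma \ref{lem.comp0} is a fractional analogue of the Boccardo--Gallou\"et / Kuusi--Mingione--Sire estimate for measure data. I would derive it from a duality argument combined with the $L^2$ (energy) theory of the fractional Laplacian. By scaling and translation we set $x_0=0$ and $R=1$: replacing $u$ by $u(R\cdot)$ and $\mu$ by $R^{2s}\mu(R\cdot)$ produces exactly the factors $R^{s_1}$ and $R^{2s}$ (the left side scales like $R^{s_1-n/(1+\epsilon)}$ and the right side like $R^{2s-n/(1+\epsilon)}$ times the unscaled quantities). It then suffices to show
\[
\Big(\iint_{\mathcal B_2}|d_{s_1}u|^{1+\epsilon}\tfrac{dx\,dy}{|x-y|^n}\Big)^{\frac1{1+\epsilon}}\le c\,\|\mu\|_{L^{1+\epsilon}(B_1)} .
\]
The left side is a Gagliardo seminorm $[u]_{W^{t,1+\epsilon}(B_2)}$ with $t=s_1$. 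So the goal is the embedding/regularity statement $\mu\in L^{1+\epsilon}\Rightarrow u\in W^{s_1,1+\epsilon}$, with $s_1=s-\frac{n\epsilon}{2(1+\epsilon)}$.

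The key steps are as follows.

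First, since $u\equiv0$ outside $B_1$, we represent $u=G\mu$, where $G$ is the Green operator of $(-\Delta)^s$ on $B_1$. Its kernel satisfies $0\le G(x,y)\le c|x-y|^{2s-n}$, and its difference satisfies $|G(x,z)-G(y,z)|\lesssim |x-y|^{\beta}\,|x-z|^{2s-n-\beta}$ for suitable $\beta<\min\{1,2s\}$, up to the symmetric term. Instead of kernel bounds, which are delicate near the boundary, I would use the energy route. Test the equation with $u$ to get $[u]_{H^s}^2=\int\mu u$. By the fractional Sobolev inequality, $u\in L^{2n/(n-2s)}$, so $\mu\in L^{2n/(n+2s)}$ gives $[u]_{H^s}\lesssim\|\mu\|_{L^{2n/(n+2s)}}$. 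This is the endpoint $\epsilon'$ with $1+\epsilon'=2n/(n+2s)$. At the other end, when $\mu\in L^1$, one has $u\in W^{t,q}$ for $q<n/(n-s)$ and $t<s$ in the regime $sq$-scaling-invariant, since $t-n/q=2s-n$. That is precisely the relation $s_1-\frac n{1+\epsilon}=2s-n$, which is the scaling of $s_1$.

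Second, I would obtain the $L^1$-type endpoint (really, the small $\epsilon$ case) by the Kuusi--Mingione--Sire truncation argument. Test with the truncations $T_k(u)$ and with $\phi=(1+|u|)^{-\delta}$-weighted truncations to get the level-set bounds
\[
\iint_{\{k<|u|<k+1\}}\!\!\frac{|u(x)-u(y)|^2}{|x-y|^{n+2s}}\lesssim\|\mu\|_{L^1} .
\]
Combined with fractional Sobolev on truncations, these yield Marcinkiewicz bounds for $u$ and for the "fractional gradient" $|d_{s_1}u|$, measured against $dx\,dy/|x-y|^n$. The restriction $\epsilon\le s/(2(n-s))$ keeps $1+\epsilon$ below the critical $n/(n-s)$ and $s_1>s/2>0$, so these are sufficient.

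Third, I would upgrade to the strong $L^{1+\epsilon}$ statement. Since $\mu\in L^{1+\epsilon}$ rather than $L^1$, one can either interpolate by the Marcinkiewicz theorem between the two endpoints (the operator $\mu\mapsto d_{s_1}$-data $u$ is linear, so the real method applies to the pair of families of estimates of matching scaling), or apply a Stein--Weiss/Hardy--Littlewood argument directly to the difference of the Green kernel.

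I expect the main obstacle to be the endpoint estimate with exactly the scaling-critical exponent pair $(s_1,1+\epsilon)$. Truncation gives only weak-type bounds, so strong bounds require interpolation, and the interpolation must preserve the coupled differentiability index. I would handle this by fixing the linear operator $T\mu=d_{s_1}u$ into the measure space $(\mathcal B_2,dx\,dy/|x-y|^n)$. I would then prove weak-type bounds $L^{p_i}\to L^{p_i,\infty}$ at two exponents $p_0<1+\epsilon<p_1$, each with its own $s_i$ satisfying $s_i-n/p_i=2s-n$. The mismatch in $s_i$ is the real difficulty, and it is why one cannot interpolate naively. The way around it is a pointwise bound $|d_{s_1}u(x,y)|\lesssim |x-y|^{s_1}\big(I_{2s-s_1}|\mu|\ldots\big)$ via Riesz potentials and Hölder difference bounds for the Green function (or, on $\bbR^n$ after extending, $u=c\,I_{2s}\mu$ minus an $s$-harmonic correction, which is smooth inside $B_1$). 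Hardy--Littlewood--Sobolev then closes the estimate.
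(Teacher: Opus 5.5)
Your plan has the right ingredient (the Kuusi--Mingione--Sire estimate), but an arithmetic error sends you to an unnecessary and unproved third step. The relation you rely on, $s_1-\frac{n}{1+\epsilon}=2s-n$, is false. In fact
\[
s_1-\frac{n}{1+\epsilon}=s-\frac{n(2+\epsilon)}{2(1+\epsilon)}=(2s-n)-s_1 ,
\]
so the pair $(s_1,1+\epsilon)$ lies strictly below the critical line, by the amount $s_1>s/2$. Your description of the $L^1$ endpoint is also internally inconsistent: $t<s$ and $q<\frac{n}{n-s}$ force $t-\frac nq<2s-n$, and the fundamental solution shows $L^1$ data can never give the critical pair. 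So there is no scaling-critical endpoint and no coupled-index interpolation problem.

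The hypothesis $\epsilon\le\frac{s}{2(n-s)}$ gives exactly $1+\epsilon\le\frac{2n-s}{2(n-s)}<\frac{n}{n-s}$, and $s_1<s$ trivially. Hence $(s_1,1+\epsilon)$ lies in the open range where the weighted-energy argument of Step 2 yields a \emph{strong} estimate
\[
\Big(\iint_{\mathcal B_2}|d_{s_1}u|^{1+\epsilon}\tfrac{dx\,dy}{|x-y|^n}\Big)^{\frac1{1+\epsilon}}\le c\,\|\mu\|_{L^1(B_1)}\le c\,\|\mu\|_{L^{1+\epsilon}(B_1)} ,
\]
where the last inequality is H\"older on $B_1$. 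Together with your (correct) scaling reduction, this is the entire proof. It is exactly what the paper does, invoking Lemmas 3.1--3.2 of \cite{KuuMinSir15m} with $v=0$, $w=u$. Your claim that truncation ``gives only weak-type bounds'' is inaccurate in this open range; that is an issue only at the endpoint $h=s$, $q=\frac{n}{n-s}$.

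As written, however, your route to the strong estimate goes through Step 3, and that step is not an argument. You yourself say the interpolation route fails, and the Green-function/Riesz-potential route is only named. It would need H\"older bounds of some order $t\in(s_1,s)$ for the Dirichlet Green function of $B_1$, uniformly up to $\partial B_1$. The domain $\mathcal B_2$ straddles the boundary, so the observation that the $s$-harmonic correction is smooth inside $B_1$ does not help, and these are precisely the boundary kernel bounds you said you wanted to avoid. Moreover, the difference bound must be of order strictly larger than $s_1$: order exactly $s_1$ leaves the integral against $\frac{dx\,dy}{|x-y|^n}$ logarithmically divergent.

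So drop Step 3 and the energy endpoint of Step 1. Use Step 2 to get the strong $W^{s_1,1+\epsilon}$ bound in terms of $\|\mu\|_{L^1}$, and finish with H\"older.
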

\begin{proof}
    By the scaling argument, we may assume $R=1$ and $x_0=0$. By following the same lines as in the proof of \cite[Lemma 3.1 and Lemma 3.2]{KuuMinSir15m} with $v=0$ and $w=u$, we get 
    \begin{align}\label{ineq1.comp0}
    R^{\frac{s}2}\left(\iint_{\mathcal{B}_{2R}(x_0)}|d_{s_1}u|^{1+\epsilon}{\frac{\,dx\,dy}{|x-y|^n}}\right)^{\frac1{1+\epsilon}}\leq c\int_{B_R(x_0)}|R^{2s}\mu|\,dx
\end{align}
for some constant $c=c(n,s)$, as $s_1<s$ and $1+\epsilon<\frac{n}{n-s}$. By applying H\"older's inequality into the right-hand side of \eqref{ineq1.comp0}, we get the desired estimate.
\end{proof}

We are now ready to prove an auxiliary higher Sobolev estimate for non-homogeneous fractional Laplace equations. Note that our argument is based on \cite[Lemma 4.6]{DieKimLeeNow24}.
\begin{lemma}\label{lem.high.diff.l1}
Let us fix a small constant $\epsilon\in\left(0,\frac{s}{2(n-s)}\right]$. Let $u$ be a weak solution to
 \begin{equation}\label{eq.sola}
\left\{
\begin{alignedat}{3}
(-\Delta)^su&= \mu&&\qquad \mbox{in  $B_{R}(x_0)$}, \\
u&=0&&\qquad  \mbox{in $\bbR^n\setminus B_{R}(x_0)$}
\end{alignedat} \right.
\end{equation}
with $\mu \in C_c^\infty(B_R(x_0))$. Then for any $\delta\in(0,2s)$,  we have 
\begin{align}\label{est.diffl1}
R^{2s-\delta}[u]_{W^{2s-\delta,1+\epsilon}(B_{R/2}(x_0))}\leq cR^{2s}\|\mu\|_{L^{1+\epsilon}(B_R(x_0))}
\end{align}
for some constant $c=c(n,s,\epsilon,\delta)$.
\end{lemma}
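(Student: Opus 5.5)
We argue by scaling and a comparison/difference-quotient scheme in the spirit of \cite[Lemma 4.6]{DieKimLeeNow24}. By scaling we take $R=1$, $x_0=0$; then \eqref{est.diffl1} reads $[u]_{W^{2s-\delta,1+\epsilon}(B_{1/2})}\le c\|\mu\|_{L^{1+\epsilon}(B_1)}$. The estimate cannot come from Lemma \ref{lem.comp0} alone, since that lemma gives only $s_1<s$ derivatives. We therefore combine it with translation invariance of $(-\Delta)^s$, which lets us bootstrap differentiability through finite differences.

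\emph{Step 1: base estimate.} Lemma \ref{lem.comp0} gives
\begin{equation*}
\Big(\iint_{\mathcal B_2}|d_{s_1}u|^{1+\epsilon}\tfrac{dx\,dy}{|x-y|^n}\Big)^{\frac1{1+\epsilon}}\le c\|\mu\|_{L^{1+\epsilon}(B_1)}.
\end{equation*}
This says $u\in W^{s_1,1+\epsilon}(B_2)$ with $s_1=s-\frac{n\epsilon}{2(1+\epsilon)}$. Since $u\equiv0$ outside $B_1$, this is in fact a global bound.

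\emph{Step 2: finite differences.} For $h\in\bbR^n$ with $|h|$ small, set $\tau_hu(x)=u(x+h)-u(x)$. Take a cutoff $\eta$ supported in $B_{3/4}$ with $\eta\equiv1$ on $B_{1/2}$. Then $\tau_h u$ solves $(-\Delta)^s(\tau_hu)=\tau_h\mu$ in $B_{3/4}$. We split $\tau_hu=w+v$, where $w$ solves the Dirichlet problem in $B_{3/4}$ with datum $\tau_h\mu$ and zero exterior values, and $v$ is $s$-harmonic in $B_{3/4}$ with exterior values $\tau_hu$.

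The term $w$ is controlled by Lemma \ref{lem.comp0} through $\|\tau_h\mu\|_{L^{1+\epsilon}}$. That norm is not small in $|h|$ for general $\mu$. We therefore avoid putting the difference on $\mu$ and use a dual/Green-function representation instead. Write $u=G_1\mu$, where $G_1$ is the Green operator of $(-\Delta)^s$ on $B_1$. Interior estimates for $G_1$ are those of the Riesz potential $I_{2s}$ up to a smooth correction: $G_1(x,y)=c_{n,s}|x-y|^{2s-n}-H(x,y)$, with $H$ smooth for $x\in B_{1/2}$ and $y\in B_1$ away from the boundary. The boundary layer must be handled by the cutoff.

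\emph{Step 3: the potential part.} For $I_{2s}\mu$ with $\mu\in L^{1+\epsilon}$, the Bessel/Triebel--Lizorkin mapping property gives $I_{2s}\mu\in \dot W^{2s-\delta,1+\epsilon}_{loc}$ for every $\delta>0$. The loss $\delta$ is needed because $\dot F^{2s}_{1+\epsilon,2}$ embeds into the Besov/Sobolev–Slobodeckij scale only with an $\delta$-loss when $1+\epsilon<2$. Concretely, we estimate $\|\tau_h^2 I_{2s}\mu\|_{L^{1+\epsilon}}\le c|h|^{2s-\delta'}\|\mu\|_{L^{1+\epsilon}}$ using kernel bounds on second differences of $|x|^{2s-n}$ (second differences are required since $2s-\delta$ may exceed $1$). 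Integrating $|h|^{-n-(2s-\delta)(1+\epsilon)}$ then gives the seminorm, or when $2s-\delta>1$ the corresponding norm of $\nabla u$.

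\emph{Step 4: the remainder.} The correction $H\mu$ is smooth in $B_{1/2}$ once $\mu$ is supported well inside $B_1$. When $\mu$ is supported near $\partial B_1$, we instead use that $v$ is $s$-harmonic in $B_{3/4}$. We then combine interior smoothness of $s$-harmonic functions, with bounds by the tail, with the $L^{1+\epsilon}$ and tail control of $u$ from Step 1 and Lemma \ref{lem.spi}.

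\emph{Main obstacle.} The hard part is Step 3 together with the $\delta$-loss bookkeeping. We need a clean higher-difference kernel estimate valid for exponent $1+\epsilon$ near $1$, where Calderón–Zygmund theory is delicate. If this proves cumbersome, the fallback is an iteration: apply Lemma \ref{lem.comp0} to difference quotients $\tau_hu/|h|^{\beta}$ repeatedly. Each step gains about $s_1$ derivatives, so finitely many steps reach $2s-\delta$. The price is a shrinking of the ball at each step, which is absorbed into $c(n,s,\epsilon,\delta)$.
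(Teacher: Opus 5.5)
Your main line (Steps 1, 3, 4) is correct, but it takes a genuinely different route from the paper's. The paper uses no representation formula. For $2s-\delta>1$ it covers $B_{1/2}$ by balls of radius $|h|^{\beta}$, with $\beta=(2s-\delta/2)/(2s)<1$. On each $B_{4|h|^{\beta}}(z_i)$ it replaces $u$ by the $s$-harmonic function $v_i$ with the same exterior values. It bounds $u-v_i$ via Lemma~\ref{lem.comp0} with the \emph{undifferentiated} datum $\mu$, so the gain $|h|^{2s\beta}$ comes from the radius of the ball, not from $\tau_h\mu$. It bounds $\delta_h^2v_i$ by the higher regularity of $s$-harmonic functions, expressed through gradient excesses of $u$. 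It then sums over the covering and closes with the bootstrap of \cite[Lemma 4.6]{DieKimLeeNow24`}; the range $s\le 1/2$ is imported from \cite{DieNow23}.

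You instead exploit the explicit fundamental solution, and this is easier than your ``main obstacle'' suggests. The kernel $|z+2h|^{2s-n}-2|z+h|^{2s-n}+|z|^{2s-n}$ is integrable, since it decays like $|z|^{2s-n-2}$ and $s<1$, and by scaling its $L^1$-norm is $c|h|^{2s}$. Young's inequality then gives $\|\tau_h^2I_{2s}\mu\|_{L^{1+\epsilon}}\le c|h|^{2s}\|\mu\|_{L^{1+\epsilon}}$ without any Calder\'on--Zygmund theory. The $\delta$-loss enters only through the convergence of $\int_{|h|<1}|h|^{\delta(1+\epsilon)-n}\,dh$.

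The boundary layer in Step 4 also disappears if you bypass the Green function. The function $w=c_{n,s}I_{2s}\mu-u$ is $s$-harmonic in $B_1$ with $\|w\|_{L^1(B_1)}+\mathrm{Tail}_1(w;B_1)\le c\|\mu\|_{L^{1+\epsilon}(B_1)}$. Interior estimates then give $\|w\|_{C^2(B_{1/2})}\le c\|\mu\|_{L^{1+\epsilon}(B_1)}$. For $n=1\le 2s$, replace $|z|^{2s-n}$ by $|z|^{2s-1}$, resp.\ $\log|z|$.

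Each approach buys something different:
\begin{itemize}
\item Your route is shorter, needs no case distinction in $s$ and $\delta$, and even gives the loss-free bound $\|\tau_h^2u\|_{L^{1+\epsilon}(B_{1/4})}\le c|h|^{2s}\|\mu\|_{L^{1+\epsilon}(B_1)}$ for $|h|\le 1/8$.
\item The paper's comparison scheme is more robust. It uses only translation invariance and the regularity of homogeneous solutions, so it carries over to kernels $a(x-y)$ and to the nonlinear setting of \cite{DieKimLeeNow24}, where no explicit potential is available.
\end{itemize}

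Two corrections to the write-up. First, the fallback iteration at the end would not work. Applying Lemma~\ref{lem.comp0} to $\tau_hu$ puts the difference on $\mu$, and $\|\tau_h\mu\|_{L^{1+\epsilon}}$ has no decay in $|h|$; this is exactly the obstruction you noticed in Step 2. Moreover, $\tau_hu$ does not vanish outside a ball. The paper's small-radius comparison is precisely the device that avoids this. Second, in Step 4 the $s$-harmonic function should be the part of $u$ generated by $\mu$ near $\partial B_1$, e.g.\ $G_1(\mu\chi_{B_1\setminus B_{3/4}})$. It should not be the $v$ of Step 2, whose exterior data are $\tau_hu$.
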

\begin{proof}
We may assume $R=1$ and $x_0=0$. Moreover, for notational convenience, we write
\begin{align*}
    E_{\mathrm{loc}}(u;B_r(x_0))\coloneqq \dashint_{B_r(x_0)}|u-(u)_{B_r(x_0)}|\,dx,\\ E(u;B_r(x_0))\coloneqq E_{\mathrm{loc}}(u;B_r(x_0))+\mathrm{Tail}_1(u;B_r(x_0)).
\end{align*}
First, we consider the case $s\leq 1/2$. Then by \cite{DieNow23}, we have 
\begin{align*}
    [u]_{W^{2s-\delta,1+\epsilon}(B_{1/2})}\leq c(E(u;B_{1})+\|\mu\|_{L^{1+\epsilon}(B_{1})}).
\end{align*}
By the Sobolev-Poincar\'e inequality, the fact that $u\equiv 0$ on $B_1$ and Lemma \ref{lem.comp0}, we derive 
\begin{align}\label{ineq00.high.diff.l1}
    [u]_{W^{2s-\delta,1+\epsilon}(B_{1/2})}\leq c\|\mu\|_{L^{1+\epsilon}(B_{1})}
\end{align}
for some constant $c=c(n,s,\delta,\epsilon)$.

We now consider the case $2s-\delta>1$.
Fix $|h|\leq1/1000$ and
\begin{equation}\label{defn.sigma0}
    \sigma=2s-\frac{\delta}{2}\quad\text{and}\quad \beta=\frac{\sigma}{2s}.
\end{equation} By \cite[Lemma 2.11]{DieKimLeeNow24}, there is a covering $\{B_{|h|^{\beta}}(z_{i})\}_{i\in I}$ of $B_{1/2}$, such that $z_{i}\in B_{1/2}$, $|I||h|^{n\beta}\leq c$ and
\begin{align}
\label{el : cov.ineq0}
\sup_{x \in \mathbb{R}^n} \sum_{i\in I}\bfchi_{B_{2^{k}|h|^{\beta}}(z_{i})}(x) \leq c2^{nk}
\end{align}
for some constant $c=c(n)$, where we denote $|I|$ the number of elements in the index set $I$. We now fix a positive integer $m_{0}$ such that 
\begin{equation}
\label{el : choi.m0}
1/8\leq2^{m_{0}+4}|h|^{\beta}<1/4.
\end{equation}
By standard variational methods, there is a weak solution $w_{i}\in W^{s,2}(B_{4|h|^{\beta}}(z_{i}))\cap L^{1}_{2s}(\bbR^{n})$ to
\begin{equation*}
\left\{
\begin{alignedat}{3}
(-\Delta)^s v_{i}&= 0&&\quad \mbox{in  $B_{4|h|^{\beta}}(z_{i})$}, \\
w_{i}&=u&&\quad  \mbox{a.e. in }\RRn\setminus B_{4|h|^{\beta}}(z_{i}).
\end{alignedat} \right.
\end{equation*}
Moreover, by the linearity of the operator, we get 
\begin{equation*}
\left\{
\begin{alignedat}{3}
(-\Delta)^s (u-v_{i})&= \mu&&\quad \mbox{in  $B_{4|h|^{\beta}}(z_{i})$}, \\
u-v_{i}&=0&&\quad  \mbox{a.e. in }\RRn\setminus B_{4|h|^{\beta}}(z_{i}).
\end{alignedat} \right.
\end{equation*}
Using Lemma \ref{lem.comp0} together with Poincar\'e's inequality, we have
\begin{equation}
\label{el : comp.ineq2}
    \dashint_{B_{4|h|^{\beta}}(z_{i})}|u-v_{i}|^{1+\epsilon}\,dx\leq c|h|^{2s(1+\epsilon))\beta}\dashint_{B_{4|h|^\beta}(z_i)}|\mu|^{1+\epsilon}\,dx
\end{equation}
for some constant $c=c(n,s)$.
Next, note that
\begin{equation}\label{el : comp.ineq3}
\begin{aligned}
\dashint_{B_{|h|^{\beta}}(z_{i})}|\delta_{h}^{2}u|^{1+\epsilon}\,dx&\leq c\left(\dashint_{B_{|h|^{\beta}}(z_{i})}|\delta_{h}^{2}(u-v_{i})|^{1+\epsilon}\,dx+\dashint_{B_{|h|^{\beta}}(z_{i})}|\delta_{h}^{2}v_i|^{1+\epsilon}\,dx\right)\eqqcolon J_{1}+J_{2},
\end{aligned}
\end{equation}
where we write $\delta_h^2u(x)\coloneqq u(x+2h)+u(x)-2u(x+h)$.
By \eqref{el : comp.ineq2}, we obtain
\begin{align}\label{el : comp.ineq4}
    J_{1}\leq c|h|^{2s(1+\epsilon)\beta}\dashint_{B_{4|h|^\beta}(z_i)}|\mu|^{1+\epsilon}\,dx
\end{align}
for some constant $c=c(n,s,\epsilon)$. 
By following the same lines as in the proof of \cite[Lemma 4.5]{DieKimLeeNow24} with $\delta_h^2v$ replaced by $|\delta_h^2v|^{1+\epsilon}$ and the fact that $(1+\epsilon)<2$, we have 
\begin{align*}
    J_{2}^{\frac1{1+\epsilon}}\leq c|h|^{s(1-\beta)+1}{E}\left(\frac{\delta_{h}v_{i}}{|h|};B_{4|h|^{\beta}}(z_{i})\right)
\end{align*}
for some constant $c=c(n,s,\epsilon)$. Using this, we further estimate 
\begin{align}\label{el : comp.ineq5}
\begin{split}
J_{2}^{\frac1{1+\epsilon}}&\leq c|h|^{s(1-\beta)}\dashint_{B_{4|h|^{\beta}}(z_{i})}|u-v_{i}|\,dx+ c|h|^{s(1-\beta)+1}{E}\left(\frac{\delta_{h}u}{|h|};B_{4|h|^{\beta}}(z_{i})\right)\\
&\leq  c\left[|h|^{s(1-\beta)
	+2s\beta}\left(\dashint_{B_{4|h|^\beta}(z_i)}|\mu|^{1+\epsilon}\,dx\right)^{\frac1{1+\epsilon}}+|h|^{s(1-\beta)+1}{E}\left(\frac{\delta_{h}u}{|h|};B_{4|h|^{\beta}}(z_{i})\right)\right]
\end{split}
\end{align}
for some constant $c=c(n,s,\delta,\epsilon)$. By (4.30) in \cite{DieKimLeeNow24}, we deduce
\begin{align}\label{el : comp.ineq6}
\begin{split}
{E}\left(\frac{\delta_{h}u}{|h|};B_{4|h|^{\beta}}(z_{i})\right)\leq c\left[\sum_{j=0}^{m_{0}+1}2^{-2sj}E_{\mathrm{loc}}(\nabla u;B_{2^{j+4}|h|^{\beta}}(z_{i}))+2^{-2sm_{0}}E\left(\nabla u;B_{{3}/{4}}\right)\right].
\end{split}
\end{align}
Thus, combining \eqref{el : comp.ineq3}--\eqref{el : comp.ineq6} leads to
\begin{align*}
    \sum_{i\in I}\dashint_{B_{|h|^{\beta}}(z_{i})}|\delta_{h}^{2}u|^{1+\epsilon}\,dx&\leq c\sum_{i\in I}|h|^{2s(1+\epsilon)\beta}\dashint_{B_{4|h|^\beta}(z_i)}|\mu|^{1+\epsilon}\,dx\\
    &\quad+c|h|^{(s(1-\beta)+1)(1+\epsilon)}\sum_{i\in I}\left(\sum_{j=0}^{m_{0}+1}2^{-2sj}E_{\mathrm{loc}}(\nabla u;B_{2^{j+4}|h|^{\beta}}(z_{i}))\right)^{1+\epsilon}\\
    &\quad +c|h|^{(s(1-\beta)+1)(1+\epsilon)}\sum_{i\in I}2^{-2sm_{0}(1+\epsilon)}E\left(\nabla u;B_{{3}/{4}}\right)^{1+\epsilon}\\
    &\eqqcolon L_{1}+L_{2}+L_{3}.
\end{align*}
We now estimate each term $L_{1},L_{2}$ and $L_{3}$.

\textbf{Estimate of $L_{1}$.} By \eqref{el : cov.ineq0}, we get
\begin{align*}
    L_{1}\leq c|h|^{2s(1+\epsilon)\beta-n\beta}\int_{B_1}|\mu|^{1+\epsilon}\,dx.
\end{align*}

\textbf{Estimate of $L_{2}$.}
By H\"older's inequality, \cite[Lemma 2.1]{DieKimLeeNow24} and \eqref{el : cov.ineq0}, we estimate $L_2$ as
\begin{equation*}
\begin{aligned}
L_{2}&\leq c|h|^{(1+s(1-\beta))(1+\epsilon)}\sum_{j=0}^{m_{0}}2^{-2sj}\sum_{i\in I}\dashint_{B_{2^{j+4}|h|^{\beta}}(z_{i})}\left|\nabla u-\left(\nabla u\right)_{B_{{3}/{4}}}\right|^{1+\epsilon}\,dx\\
&\leq c|h|^{(1+s(1-\beta))(1+\epsilon)}\sum_{j=0}^{m_{0}}2^{(-2s+n)j}\sum_{i\in I}\int_{B_{3/4}}\left|\nabla u-\left(\nabla u\right)_{B_{{3}/{4}}}\right|^{1+\epsilon}\bfchi_{B_{2^{j+4}|h|^{\beta}}(z_{i})}\,dx\\
&\leq c|h|^{(1+s(1-\beta))(1+\epsilon)-n\beta}\sum_{j=0}^{m_{0}}2^{-2sj}\int_{B_{{3}/{4}}}\left|\nabla u-\left(\nabla u\right)_{B_{{3}/{4}}}\right|^{1+\epsilon}\,dx\\
&\leq c|h|^{(1+s(1-\beta))(1+\epsilon)-n\beta}\int_{B_{{3}/{4}}}\left|\nabla u-\left(\nabla u\right)_{B_{{3}/{4}}}\right|^{1+\epsilon}\,dx
\end{aligned}
\end{equation*}
for some constant $c=c(n,s,\delta,\epsilon)$.

\textbf{Estimate of $L_{3}$.}
By following the same lines as in the estimate of $L_3$ given in \cite[Lemma 4.6]{DieKimLeeNow24}, we deduce
\begin{align*}
    L_{3}\leq c|h|^{(s(1-\beta)+1+2\beta)(1+\epsilon)-n\beta}E\left(\nabla u;B_{{3}/{4}}\right)^{1+\epsilon}.
\end{align*}
By combining all the estimates, we have
\begin{align}\label{el : comp.ineq8}
\begin{split}
\int_{B_{{1}/{2}}}|\delta_{h}^{2}u|^{1+\epsilon}\,dx&\leq\sum_{i\in I}\int_{B_{|h|^{\beta}}(z_{i})}|\delta_{h}^{2}u|^{1+\epsilon}\,dx\\
&\leq c|h|^{2s\beta(1+\epsilon)}\int_{B_1}|\mu|^{1+\epsilon}\\
&\quad+c|h|^{(1+s(1-\beta))(1+\epsilon)}\int_{B_{3/4}}|\nabla u-(\nabla u)_{B_{3/4}}|^{1+\epsilon}\,dx\\
&\quad+c|h|^{(1+2s\beta+s(1-\beta))(1+\epsilon)}E\left(\nabla u;B_{{3}/{4}}\right)^{1+\epsilon}
\end{split}
\end{align}
for some constant $c=c(n,s,\delta,\epsilon)$. Now, by applying the same bootstrap argument given in the proof of \cite[Lemma 4.6]{DieKimLeeNow24} into \eqref{el : comp.ineq8}, we obtain 
\begin{align*}
    [\nabla u]_{W^{2s-\delta-1,1+\epsilon}(B_{1/2})}\leq c\left(E(u;B_1)+\left(\int_{B_1}|\mu|^{1+\epsilon}\,dx\right)^{\frac1{1+\epsilon}}\right)
\end{align*}
for some constnat $c=c(n,s,\delta,\epsilon)$ As in \eqref{ineq00.high.diff.l1}, we obtain
\begin{align*}
    [\nabla u]_{W^{2s-\delta-1,1+\epsilon}(B_{1/2})}\leq c\left(\int_{B_1}|\mu|^{1+\epsilon}\,dx\right)^{\frac1{1+\epsilon}},
\end{align*}
which completes the proof.
\end{proof}
We now explain how to obtain the existence of a distributional solution to the fractional Poisson equation satisfying the estimate \eqref{est.diffl1} when the right-hand side is in $L^{1+\epsilon}$ via approximation.
\begin{remark}\label{rmk.sola}
    Using the notion of SOLA (see \cite{KuuMinSir15m}) and Lemma \ref{lem.high.diff.l1}, we can prove that there is a distributional solution $u$ to
    \begin{equation*}
\left\{
\begin{alignedat}{3}
(-\Delta)^su&= \mu&&\qquad \mbox{in  $B_{1}$}, \\
u&=0&&\qquad  \mbox{in $\bbR^n\setminus B_{1}$}
\end{alignedat} \right.
\end{equation*}
with $\mu \in L^{1+\epsilon}(B_1)$ such that for any $\delta\in(0,2s)$, 
\begin{align*}
[u]_{W^{2s-\delta,1+\epsilon}(B_{3/4})}\leq c\|\mu\|_{L^{1+\epsilon}(B_1)}.
\end{align*}
Here, the distributional solution $u$ satisfies for any $\psi\in C_c^\infty(B_1)$,
\begin{align}\label{Defn.distr}
    \iint_{\bbR^{2n}}d_sud_s\psi\frac{\,dx\,dy}{|x-y|^n}=\int_{B_1}\mu\psi\,dx.
\end{align}
By \cite[Theorem 1.1]{KuuMinSir15m}, there are sequences of $u_k\in W^{s,2}(\bbR^n)$ and $\mu_k\in C_c^\infty(B_1)$ such that
 \begin{equation}\label{eq.sola}
\left\{
\begin{alignedat}{3}
(-\Delta)^su_k&= \mu_k&&\qquad \mbox{in  $B_{1}$}, \\
u_k&=0&&\qquad  \mbox{in $\bbR^n\setminus B_{1}$}
\end{alignedat} \right.
\end{equation}
and the function $u_k\to u$ a.e. in $\bbR^n$ and 
\begin{align*}
    \lim_{j\to\infty}\int_{B_1}|\mu_j|^{1+\epsilon}\leq \int_{B_1}|\mu|^{1+\epsilon}. 
\end{align*}
Using this and Lemma \ref{lem.high.diff.l1}, we have 
\begin{align*}
    [u_k]^{1+\epsilon}_{W^{2s-\sigma,1+\epsilon}(B_{3/4})}\leq c\int_{B_1}|\mu|^{1+\epsilon}.
\end{align*}
By Fatou's lemma together with the fact that $u_k\to u$ a.e. in $\bbR^n$, we get 
\begin{align*}
    [u]^{1+\epsilon}_{W^{2s-\sigma,1+\epsilon}(B_{3/4})}\leq c\int_{B_1}|\mu|^{1+\epsilon}.
\end{align*}
In addition, given $\epsilon>0$, we choose 
\begin{equation}
    \sigma=\frac{n\epsilon}{2(1+\epsilon)}
\end{equation} to see that 
\begin{align*}
    (2s-\sigma)-\frac{n}{1+\epsilon}\geq s-\frac{n(2+\epsilon)}{2+2\epsilon}.
\end{align*}
Since $p=\frac{n}s$, there is a sufficiently small $\varsigma=\varsigma(n,s,\epsilon)$ such that 
\begin{align}\label{varsigma.ee}
    \frac{1}{p'+\varsigma}\left(1-\frac{\varsigma}{p'}\right)>s-\frac{n(2+\epsilon)}{2+2\epsilon}.
\end{align}
Therefore, by the embedding result given e.g.\ in \cite[Proposition 2.4]{Now23v}, we get 
\begin{equation}\label{ineq.rmk.sola}
\begin{aligned}
    R^{s}[u]_{W^{s,p'}(B_{3/4})}\leq c[u]_{W^{s+\frac{n\varsigma}{p'(p'+\varsigma)},p'+\varsigma}(B_{3/4})}&\leq c[u]_{W^{2s-\sigma,1+\epsilon}(B_{3/4})}\\
    &\leq c\left(\int_{B_1}|\mu|^{1+\epsilon}\right)^{\frac1{1+\epsilon}}.
\end{aligned}
\end{equation}
\end{remark}

We are now ready to prove the following div-curl lemma with a right-hand side.
\begin{lemma}\label{lem.h1norm}
Fix $p_0\in \left(1,1+\frac{s}{2(n-s)}\right]$ and $p=\frac{n}{s}$. Let us assume that $F\in L^{p'}_{\mathrm{od}}(B_1)$ satisfies 
    \begin{equation*}
        -\divergence_s F=g\quad\text{in }B_1,
    \end{equation*}
    for some $g\in L^{p_0}(B_1)$. 
    Then for any $\psi\in C_c^\infty(B_{3/4})$, we have
    \begin{equation}\label{goal.h1norm}
    \begin{aligned}
        \left|\int_{\bbR^n}(F\odot d_s w)\psi\,dx\right|&\leq c\left(\|F\|_{L^{p'}_{\mathrm{od}}(B_1)}+\|g\|_{L^{p_0}(B_1)}\right){\widetilde{\mathcal{E}}(w;B_1)}^{\frac1p}\\
        &\qquad\times\left[[\psi]_{BMO(\bbR^n)}+\|\psi\|_{L^1(\bbR^n)}\right]
    \end{aligned}
    \end{equation}
    for some constant $c=c(n,s,p_0)$, where the dot product $\odot $  is defined in \eqref{defn.odot}.
\end{lemma}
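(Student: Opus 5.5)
The plan is to reduce to the homogeneous div-curl lemma of Mazowiecka--Schikorra \cite{MazSch18}. That result states: if $-\divergence_s G=0$ in $B_1$, then $G\odot d_s w$ lies in a local Hardy space. Concretely, for $\psi\in C_c^\infty(B_{3/4})$,
\begin{equation*}
\left|\int (G\odot d_s w)\psi\right|\lesssim \|G\|_{L^{p'}_{\mathrm{od}}(B_1)}\widetilde{\mathcal{E}}(w;B_1)^{\frac1p}\left([\psi]_{BMO}+\|\psi\|_{L^1}\right).
\end{equation*}
To get into that setting, we subtract off the right-hand side $g$ using a solution of the fractional Poisson equation.

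\textbf{Step 1: removing $g$.} Let $v$ be the distributional solution from Remark \ref{rmk.sola}, so that $(-\Delta)^s v=g$ in $B_1$ and $v=0$ outside $B_1$. Take $\epsilon=p_0-1\le \frac{s}{2(n-s)}$. Then \eqref{ineq.rmk.sola} gives
\begin{equation*}
[v]_{W^{s,p'}(B_{3/4})}\le c\|g\|_{L^{p_0}(B_1)}.
\end{equation*}
We also need global control of $\|d_s v\|_{L^{p'}_{\mathrm{od}}}$. For this, combine the above with $v\equiv 0$ off $B_1$ and Lemma \ref{lem.comp0}, after a harmless rescaling (apply the remark on $B_2$ with $g$ extended by zero). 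Note that $p'=\frac{n}{n-s}$, so $W^{s,p'}$ is the natural space.

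Set $G\coloneqq F-d_s v$. By \eqref{Defn.distr}, $G$ satisfies $-\divergence_s G=0$ in $B_1$ when tested with smooth $\psi$, and by density also when tested with functions in $\widehat{W}^{s,p}_{00}$. Moreover,
\begin{equation*}
\|G\|_{L^{p'}_{\mathrm{od}}}\le \|F\|_{L^{p'}_{\mathrm{od}}(B_1)}+c\|g\|_{L^{p_0}(B_1)}.
\end{equation*}

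\textbf{Step 2: the homogeneous part.} Apply the Mazowiecka--Schikorra div-curl estimate to $G\odot d_s w$. The proof I would follow, and re-verify in this setting, uses the $H^1$--$BMO$ duality and the product rule
\begin{equation*}
d_s(w\psi)(x,y)=\psi(x)d_sw(x,y)+w(y)d_s\psi(x,y).
\end{equation*}
One writes $\int (G\odot d_sw)\psi$ as $\int G\odot d_s(w\psi)$, which vanishes by the equation, minus a commutator term. The commutator is estimated via the interpolation Lemma \ref{lem.gagnir}, which gives $[\psi]_{W^{s,p}}$-type bounds in terms of $[\psi]_{BMO}$, together with a localization using the cutoff. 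Alternatively, the statement can be quoted directly from \cite{MazSch18}.

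\textbf{Step 3: the error term.} It remains to bound $\int (d_s v\odot d_s w)\psi$. Here I would not use $BMO$ alone, because $d_sv$ carries extra integrability: $v\in W^{2s-\delta,p_0}$. Since $(-\Delta)^s v=g$, the term can be rewritten as a bilinear form, namely $\int g\,w\psi$ plus commutator pieces of the form $\iint d_sv\, w(y)\,d_s\psi$. The first piece is bounded by $\|g\|_{L^{p_0}}\|w\psi\|_{L^{p_0'}}$. Using $|w|$ controlled by $\widetilde{\mathcal{E}}^{1/p}$ via the Sobolev--Poincaré inequality of Lemma \ref{lem.spi} (after subtracting a constant, which costs nothing since $\int G\odot d_s(c\psi)$-type terms vanish), and $\|\psi\|_{L^{p_0'}}\lesssim [\psi]_{BMO}+\|\psi\|_{L^1}$ by John--Nirenberg on the support, this piece is controlled. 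The commutator pieces are handled as in Step 2.

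The main obstacle is the global $L^{p'}_{\mathrm{od}}$ bound on $d_sv$ together with this commutator estimate, since the tails of $v$ interact with $\psi$ only through $\|\psi\|_{L^1}$.
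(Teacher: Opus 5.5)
\textbf{Steps 1 and 2 follow the paper; Step 3 has a genuine gap.} Like you, the paper subtracts $d_sv$, with $v$ solving $(-\Delta)^s v=g$ in $B_2$, and quotes \cite[Proposition 2.4]{MazSch18} for the divergence-free part $F-d_sv$.

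Your sketch of \emph{why} that proposition holds is not correct. Lemma \ref{lem.gagnir} only gives $[\psi]_{W^{\theta s,p/\theta}}\lesssim[\psi]_{BMO}^{1-\theta}[\psi]_{W^{s,p}}^{\theta}$. Moreover, $[\psi]_{W^{s,p}}$ is not controlled by $[\psi]_{BMO}+\|\psi\|_{L^1}$; think of $\eta(x)\sin(kx_1)$ as $k\to\infty$. Citing the result is still enough for Step 2.

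The gap is in Step 3. By the product rule,
\[
\int (d_sv\odot d_sw)\psi\,dx=\int g\,w\psi\,dx-\iint d_sv(x,y)\,w(y)\,d_s\psi(x,y)\frac{dx\,dy}{|x-y|^n}.
\]
So your ``commutator piece'' is, up to the harmless $g$-term, exactly the quantity you set out to bound. The rewriting is a tautology, and it has already used up the equation for $v$.

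Nothing is then left to be ``handled as in Step 2''. The Mazowiecka--Schikorra lemma cannot be quoted, because $d_sv$ is not $s$-divergence free. The commutator also involves $d_s\psi$, on which a $BMO$ bound gives no control, and the interpolation route fails for the reason above. As written, the error term is never actually estimated.

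\textbf{The missing idea.} Use the extra integrability of $d_sv$ that you mention, but through H\"older rather than through the equation. Since $p_0>1$, \eqref{ineq.rmk.sola} (for $v$ solving the problem in $B_2$) gives $v\in W^{s+\frac{n\varsigma}{p'(p'+\varsigma)},p'+\varsigma}(B_1)$. Equivalently, $|d_sv|\,|x-y|^{-n/p'}\in L^{p'+\varsigma}(B_1\times B_1)$, which is strictly better than $L^{p'}$.

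On $B_1\times B_1$, write $\frac{|d_sv||d_sw|}{|x-y|^n}=\frac{|d_sv|}{|x-y|^{n/p'}}\cdot\frac{|d_sw|}{|x-y|^{n/p}}$. Apply H\"older with exponents $p'+\varsigma$, $p$ and $q_0$, where $\frac1{q_0}=\frac1{p'}-\frac1{p'+\varsigma}>0$. This leaves $\psi$ in $L^{q_0}(B_{3/4})$ with $q_0<\infty$, and $\|\psi\|_{L^{q_0}(B_{3/4})}\lesssim[\psi]_{BMO}+\|\psi\|_{L^1}$ by John--Nirenberg.

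The remaining region, $x\in B_{3/4}$ and $y\notin B_1$, is handled as follows. Split $v(x)-v(y)$ and $w(x)-w(y)$ around their averages on $B_{3/4}$. Then use H\"older, Lemma \ref{lem.spi}, and the bound of the tail of $w-(w)_{B_{3/4}}$ by $\widetilde{\mathcal{E}}(w;B_1)$.

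Alternatively, you could redo the maximal-function proof of \cite{MazSch18} for $d_sv\odot d_sw$. There the extra term $\int g\,(w-c)\varphi_t$ is controlled by $(M|g|^{p_0})^{1/p_0}\,[w]_{BMO}$. That argument would have to be carried out, though; it is not covered by your Step 2.
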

\begin{proof}
    We may assume $g\in L^{p_0}(\bbR^n)$ by extending $g\equiv 0$ on $\bbR^n\setminus B_1$. Then by Remark \ref{rmk.sola}, there is a distributional solution $v$ to
    \begin{equation}\label{eq1.h1norm}
        (-\Delta)^s v=g\quad\text{in }B_2
    \end{equation}
    with $v\equiv 0$ on $\bbR^n\setminus B_2$ satisfying the mentioned Sobolev estimates.
    By \eqref{ineq.rmk.sola}, we get 
    \begin{equation}\label{ineq000.h1norm}
    \begin{aligned}
    \|d_sv\|_{L^{p'}_{\mathrm{od}}(B_1)}\leq [v]_{W^{s,p'}(B_{3/2})}+\int_{\bbR^n\setminus B_{3/2}}\int_{B_1}|d_sv|^{p'}\frac{\,dx\,dy}{|x-y|^n}&\leq c\left(\|g_0\|_{L^{p_0}(B_2)}+\int_{B_2}|v|^{p'}\,dx\right)\\
    &\leq c\|g_0\|_{L^{p_0}(B_2)}
    \end{aligned}
    \end{equation}
    where $c=c(n,s,p_0)$ and we have used Lemma \ref{lem.comp0} together with the Soboev-Poincar\'e inequality, as $s_1-\frac{n}{1+\epsilon}>\frac{n}{p'}$. 
Using the function $v$, we get 
    \begin{align*}
        \left|\int_{\bbR^n}(F\odot d_s w)\psi\,dx\right|&\leq \left|\int_{\bbR^n}((F-d_sv)\odot d_s w)\psi\,dx\right|+\left|\int_{\bbR^n}(d_sv\odot d_s w)\psi\,dx\right|\coloneqq I+J.
    \end{align*}
 To estimate the term $I$, we first note that
    \begin{align*}
        -\divergence_s(F-d_sv)=0\quad\text{in }B_1,
    \end{align*}
 which follows from \eqref{Defn.distr}.
 Next, by following the same lines as in the proof of \cite[Proposition 2.4]{MazSch18} together with suitable choices of bump functions, we estimate $I$ as
   \begin{equation}\label{ineq31.h1norm}
    \begin{aligned}
        I&\leq c\|F-d_sv\|_{L^{p'}_{\mathrm{od}}(B_1)}{\widetilde{\mathcal{E}}(w;B_1)}^{\frac1p}\left[[\psi]_{BMO(\bbR^n)}+\|\psi\|_{L^1(\bbR^n)}\right]\\
        &\leq c\left(\|F\|_{L^{p'}_{\mathrm{od}}(B_1)}+\|g\|_{L^{p_0}(B_1)}\right){\widetilde{\mathcal{E}}(w;B_1)}^{\frac12}\left[[\psi]_{BMO(\bbR^n)}+\|\psi\|_{L^1(\bbR^n)}\right],
    \end{aligned}
   \end{equation}
    where we have used \eqref{ineq000.h1norm} for the last inequality.

    We now use \eqref{ineq.rmk.sola} to estimate the term $J$. First, we observe
    \begin{align*}
        J&\leq \left|\int_{B_1}\int_{B_1}d_sv d_sw\frac{\,dy}{|x-y|^n}\psi(x)\,dx\right|+\left|\int_{B_{3/4}}\int_{\bbR^n\setminus B_1}d_svd_sw\frac{\,dy}{|x-y|^n}\psi(x)\,dx\right|\eqqcolon J_1+J_2,
    \end{align*}
    where we have used the fact that $\psi\equiv 0$ on $\bbR^n\setminus B_{3/4}$. We now use H\"older's inequality, \eqref{ineq.rmk.sola} with $\epsilon=p_0-1$ and \eqref{prop.bmo} to derive 
    \begin{equation}\label{ineq4.h1norm}
    \begin{aligned}
        J_1&\leq \left(\int_{B_1}\int_{B_1}\left(\frac{|d_sv|}{|x-y|^{n/p'}}\right)^{p'+\varsigma}\,dx\,dy\right)^{\frac{1}{2+\varsigma}}\left(\int_{B_1}\int_{B_1}\left(\frac{|d_sw|}{|x-y|^{n/p}}\right)^{p}\,dx\,dy\right)^{\frac{1}{p}}\\
        &\quad\times \left(\int_{B_1}\int_{B_1}\left(|\psi(x)|\right)^{{q_0}}\,dx\,dy\right)^{\frac{1}{q_0}}\\
        &\leq c[v]_{W^{s+\frac{n\varsigma}{p'(p'+\varsigma)},p'+\varsigma}(B_{1})}[w]_{W^{s,p}(B_1)}\left[\left(\dashint_{B_{\frac34}}|\psi-(\psi)_{B_{\frac34}}|^{q_0}\,dx\right)^{\frac{1}{q_0}}+(\psi)_{B_{\frac34}}\right]\\
        &\leq c\|g\|_{L^{p_0}(B_1)}{\widetilde{\mathcal{E}}(w;B_1)}^{\frac1p}\left[[\psi]_{BMO(\bbR^n)}+\|\psi\|_{L^1(\bbR^n)}\right]
    \end{aligned}
    \end{equation}
    for some constant $c=c(n,s,p_0)$, where $\frac1{q_0}=1-\frac1p-\frac{1}{p'+\varsigma}$ and the constant $\varsigma=\varsigma(n,s,p_0)\in(0,1)$ is determined in \eqref{varsigma.ee}.  In light of the fact that $|y|\leq 16|y-x|$ for any $x\in B_{3/4}$ and $y\in\bbR^n\setminus B_1$, we get
    \begin{align*}
        J_2\leq c\int_{B_{3/4}}\int_{\bbR^n\setminus B_1}|v(x)-v(y)||w(x)-w(y)||\psi(x)|\frac{\,dy}{|y|^{n+sp}}\,dx
    \end{align*}
    for some constant $c$.
    We further estimate $J_2$ as 
    \begin{align*}
        J_2&\leq c\int_{B_{3/4}}\int_{\bbR^n\setminus B_1}|v(x)-(v)_{B_{3/4}}||w(x)-(w)_{B_{3/4}}||\psi(x)|\frac{\,dy}{|y|^{n+sp}}\,dx\\
        &\quad+c\int_{B_{3/4}}\int_{\bbR^n\setminus B_1}|v(x)-(v)_{B_{3/4}}||w(y)-(w)_{B_{3/4}}||\psi(x)|\frac{\,dy}{|y|^{n+sp}}\,dx\\
        &\quad+c\int_{B_{3/4}}\int_{\bbR^n\setminus B_1}|v(y)-(v)_{B_{3/4}}||w(x)-(w)_{B_{3/4}}||\psi(x)|\frac{\,dy}{|y|^{n+sp}}\,dx\\
        &\quad+c\int_{B_{3/4}}\int_{\bbR^n\setminus B_1}|v(y)-(v)_{B_{3/4}}||w(y)-(w)_{B_{3/4}}||\psi(x)|\frac{\,dy}{|y|^{n+sp}}\,dx\eqqcolon \sum_{i=1}^4J_{2,i}
    \end{align*}
    Using H\"older's inequality and Lemma \ref{lem.spi}, we estimate $I_{2,1}$ as
    \begin{align*}
        J_{2,1}&\leq c\|v-(v)_{B_{3/4}}\|_{L^{p'}(B_{3/4})}\|w-(w)_{B_{3/4}}\|_{L^{\frac{np}{n-s}}(B_{3/4})}\|\psi\|_{L^{q_1}(B_{3/4})}\\
        &\leq c[v]_{W^{s,p'}(B_{3/4})}[w]_{W^{s,p}(B_{3/4})}\left[[\psi]_{BMO(\bbR^n)}+\|\psi\|_{L^1(\bbR^n)}\right],
    \end{align*}
    where we have used the same technique as in \eqref{ineq4.h1norm} to control the term $\|\psi\|_{L^{q_0}}$ and 
    \begin{equation}\label{const.q1}
        \frac1q_1\coloneqq1-\frac1{p'}-\frac{n-s}{np}>0.
    \end{equation}
     We next estimate $J_{2,2}$ as 
    \begin{align*}
        J_{2,2}&\leq \mathrm{Tail}_{p}(w-(w)_{B_{3/4}})\|v-(v)_{B_{3/4}}\|_{L^{p'}(B_{3/4})}\|\psi\|_{L^{p}(B_{3/4})}\\
        &\leq c{{\mathcal{E}}(w;B_{1})}^{\frac1p}[v]_{W^{s,p'}(B_{3/4})}\left[[\psi]_{BMO(\bbR^n)}+\|\psi\|_{L^1(\bbR^n)}\right],
    \end{align*}
    where we have used H\"older's inequality and Lemma \ref{lem.spi}. In addition, we have also used the fact that 
    \begin{align*}
        \mathrm{Tail}_{p}(w-(w)_{B_{3/4}};B_{3/4})&=\int_{\bbR^n\setminus B_{3/4}}\frac{|w(y)-(w)_{B_{3/4}}|^p}{|y|^{n+sp}}\,dy\\
        &\leq \int_{\bbR^n\setminus B_{3/4}}\int_{B_{3/4}}\frac{|w(y)-w(x)|^p}{|y|^{n+sp}}\,dx\,dy\leq \widetilde{\mathcal{E}}(w;B_1).
    \end{align*}Similarly, we estimate $J_{2,3}+J_{2,4}$ as 
    \begin{align*}
        J_{2,3}+J_{2,4}\leq  c[v]_{W^{s,p'}(B_{3/4})}{\widetilde{\mathcal{E}}(w;B_{1})}^{\frac1p}\left[[\psi]_{BMO(\bbR^n)}+\|\psi\|_{L^1(\bbR^n)}\right].
    \end{align*}
By combining all the estimates $J_{2,i}$ for any $i=1,2,3,4$, we obtain 
\begin{equation}\label{ineq5.h1norm}
\begin{aligned}
    J_2&\leq c\|g\|_{L^{p_0}(B_1)}{\widetilde{\mathcal{E}}(w;B_{1})}^{\frac1p}\left[[\psi]_{BMO(\bbR^n)}+\|\psi\|_{L^1(\bbR^n)}\right]
\end{aligned}
\end{equation}
for some constant $c=c(n,s,p_0)$. We now employ the estimates $J_1$ and $J_2$ given in \eqref{ineq4.h1norm} and \eqref{ineq5.h1norm}, respectively, together with \eqref{ineq31.h1norm} to derive 
\begin{align*}
    \left|\int_{\bbR^n}(F\odot d_s w)\psi\,dx\right|
    &\leq\left|\int_{\bbR^n}((F-d_sv)\odot d_s w)\psi\,dx\right|+\left|\int_{\bbR^n}(d_sv\odot d_s w)\psi\,dx\right|\\
    &\leq c\left(\|F\|_{L^{p'}_{\mathrm{od}}(B_1)}+\|g\|_{L^{p_0}(B_1)}\right){\widetilde{\mathcal{E}}(w;B_{1})}^{\frac1p}\left[[\psi]_{BMO(\bbR^n)}+\|\psi\|_{L^1(\bbR^n)}\right]
\end{align*}
for some constant $c=c(n,s,p_0)$, which completes the proof.
\end{proof}
\begin{remark}\label{rmk.h1norm}
    Assume $p=2$ and $n>2s$, then $p_0=\min\left\{\frac{n}{2s},\frac{2n}{n+s}\right\}>\frac{2n}{n+2s}$. Therefore, there is the unique weak solution $v\in \widetilde{H}^s_{00}(B_{2})$ to 
    \begin{align*}
        (-\Delta)^sv=g\quad\text{in }B_{2}
    \end{align*}
    and the solution $v$ satisfies \eqref{ineq000.h1norm} with $p'=2$. Moreover, by the self-improving property given in \cite{KuuMinSir15}, there is a small constant $\varsigma=\varsigma(n,s)$ such that
    \begin{align*}
        [v]_{W^{s+\frac{n\varsigma}{2(2+\varsigma)},2+\varsigma}(B_1)}\leq c\|g\|_{L^{p_0}(B_2)}
    \end{align*}
    for some constant $c=c(n,s)$. Thus,when $p=2$, $n>2s$ and $p_0=\min\left\{\frac{n}{2s},\frac{2n}{n+s}\right\}$, \eqref{goal.h1norm} holds.
\end{remark}

\subsection{Decay estimates for homogeneous systems}
We end this section with providing decay estimates for the functional $\widetilde{\mathcal{E}}(v;\cdot)$, where $v$ is a weak solution to a fractional p-Laplace type system.
\begin{lemma}\label{lem.fracdecay}
   Let $p=\frac{n}{s}$ and let $v\in \widehat{W}^{s,p}(B_1;\bbR^N)$ be a weak solution to 
    \begin{align*}
        \mathcal{L}v=0\quad\text{in }B_1.
    \end{align*}
    Then there is a small constant $\alpha=\alpha(n,s,\Lambda,N)$ such that $u\in C^{\alpha}(B_{1/2};\bbR^N)$. In addition, for any $\beta\leq \alpha$ with $\beta q<sp$, we get
    \begin{align}\label{goal.fracdecay}
        \mathcal{E}(v;B_\tau)\leq c\tau^{p\beta}\mathcal{E}(v;B_{1/2})
    \end{align}
    for some constant $c=c(n,s,\Lambda,N)$.
\end{lemma}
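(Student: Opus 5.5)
The plan has two parts: first establish the Hölder continuity of $v$, then convert it, together with the critical scaling $sp=n$, into the decay \eqref{goal.fracdecay} of the excess $\mathcal{E}$.

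\textbf{Step 1: Hölder continuity.} Since $sp=n$, the space $W^{s,p}$ is exactly critical. Hence $\widetilde{\mathcal{E}}(v;B_\rho)$ is scale invariant, and by Lemma \ref{lem.reltildeee} the quantity $\mathcal{E}(v;B_\rho(z))$ is bounded uniformly in $\rho$ by $c\,\widetilde{\mathcal{E}}(v;B_{1})$.

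For a homogeneous fractional $p$-Laplace type \emph{system}, De Giorgi arguments are not available componentwise. Instead I would use the absolute continuity of the integral. Given $\eta>0$, there is $r_0$ such that $\widetilde{\mathcal{E}}(v;B_{2r}(z))\le\eta$ for all $z\in B_{1/2}$ and $r\le r_0$. I would then prove a hole-filling (Widman-type) estimate. Test the equation with $\phi=(v-(v)_{B_{2r}})\eta_r^p$, where $\eta_r$ is a cutoff equal to $1$ on $B_r$ and supported in $B_{2r}$. This gives a Caccioppoli inequality
\begin{align*}
\iint_{B_r\times B_r}|d_sv|^p\frac{dx\,dy}{|x-y|^n}\le c\iint_{(B_{2r}\times B_{2r})\setminus(B_r\times B_r)}|d_sv|^p\frac{dx\,dy}{|x-y|^n}+c\,\mathrm{Tail\ terms}.
\end{align*}
The tail contributions are controlled in the critical case by the dyadic-sum argument of Lemma \ref{lem.consttail}. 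Filling the hole then yields $\widetilde{\mathcal{E}}$-type energy decay $\theta\in(0,1)$ per dyadic scale, up to a geometrically weighted sum of the energies on larger annuli.

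Iterating this gives $\mathcal{E}(v;B_\rho(z))\le c\rho^{p\alpha}$ for some $\alpha=\alpha(n,s,\Lambda,N)$. Campanato's characterization then gives $v\in C^\alpha(B_{1/2})$, because the averaged oscillation $\rho^{sp}\dashint\int|d_sv|^p/|x-y|^n$ controls $\dashint|v-(v)|^p$ by Lemma \ref{lem.spi}.

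\textbf{Step 2: Decay of $\mathcal{E}$.} Fix $\beta\le\alpha$ with $\beta q<sp$, and split $\mathcal{E}(v;B_\tau)$ into its local part and its tail.

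The local part is handled by the Caccioppoli inequality on $B_{2\tau}$ together with the Hölder bound:
\begin{align*}
\tau^{sp}\dashint_{B_\tau}\int_{B_\tau}|d_sv|^p\lesssim \dashint_{B_{2\tau}}|v-(v)_{B_{2\tau}}|^p+\mathrm{Tail}\lesssim \tau^{p\beta}[v]^p_{C^\beta(B_{1/4})}+\mathrm{Tail}.
\end{align*}

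For the tail, decompose dyadically as in the proof of Lemma \ref{lem.consttail}. The annuli $B_{2^{i+1}\tau}\setminus B_{2^i\tau}$ inside $B_{1/4}$ contribute $\sum_i 2^{-spi}(2^i\tau)^{q\beta}$, which sums to $c\tau^{q\beta}$ precisely because $\beta q<sp$; after raising to the power $p/q$ this becomes $\tau^{p\beta}$. The far part contributes $\tau^{sp}\mathcal{E}(v;B_{1/2})$, and $\tau^{sp}\le\tau^{p\beta}$.

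Finally, the Hölder seminorm is scaled back: the Step 1 estimate gives $[v]^p_{C^\beta(B_{1/4})}\le c\,\mathcal{E}(v;B_{1/2})$. This is because the equation is homogeneous and invariant under $v\mapsto (v-k)/\lambda$ with $\lambda=\mathcal{E}(v;B_{1/2})^{1/p}$, so we may normalize $\mathcal{E}(v;B_{1/2})=1$.

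\textbf{Main obstacle.} I expect the hardest step to be the hole-filling estimate of Step 1 for systems with nonlocal tails. The tail terms do not shrink automatically, so the iteration has to be carried out on the full quantity $\mathcal{E}$, including its tail, rather than on the local energy alone. I would first try the Widman iteration on $\mathcal{E}$ directly, using Lemma \ref{lem.consttail} to absorb the tails with a factor $2^{-sp}$ per scale.
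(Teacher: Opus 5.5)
Your Step 2 is essentially the paper's argument: a H\"older bound on $B_{1/4}$, a dyadic splitting of $\mathrm{Tail}_q$ using $\beta q<sp$, and rescaling. Passing the local part through a Caccioppoli inequality is in fact necessary. Inserting $[v]_{C^\beta}$ directly into the Gagliardo integral, as the paper does, converges only for $\beta>s$, whereas $\beta\le\alpha$ is small.

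The gap is in Step 1. The paper does not prove H\"older continuity by hole-filling. It localizes with a cutoff (\cite[Lemma 2.6]{DieKimLeeNow24d}) and adapts a known H\"older estimate for fractional $p$-Laplace systems (\cite[Theorem 1.6]{Vin25}). This gives $\|v-(v)_{B_{1/4}}\|_{C^\beta(B_{1/4})}\le cE_p(v;B_{3/8})$ with a universal constant $c$.

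Your substitute does not close as described. Set $\Phi(r)=\iint_{B_r\times B_r}|d_sv|^p\frac{dx\,dy}{|x-y|^n}$ and use a cutoff between $B_r$ and $B_{2r}$. Hole-filling plus the dyadic tail bound then gives
\[
\Phi(r)\le\theta\,\Phi(2r)+C\sum_{i\ge1}2^{-\kappa i}\,\Phi(2^{i+1}r)+\dots,
\]
where $C$ is of the order of the Caccioppoli constant, not small. If $\theta+C\sum_i2^{-\kappa i}\ge1$, a constant sequence satisfies this inequality, so no decay follows.

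Lemma \ref{lem.consttail} cannot provide the absorption. For $sp=n$ it reads $\mathcal{E}(v;B_{\rho R})\le c\,\mathcal{E}(v;B_R)$ with $c\ge1$. The factor $2^{-sp}$ per scale multiplies only the tail pieces, while the local oscillations at each scale remain of unit size. So iterating on $\mathcal{E}$ instead of $\Phi$ changes nothing.

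The smallness you get from absolute continuity also plays no role, since the system is invariant under $v\mapsto\lambda v+k$. Worse, a $v$-dependent $r_0$ is incompatible with the universal bound $[v]^p_{C^\beta(B_{1/4})}\le c\,\mathcal{E}(v;B_{1/2})$ that your normalization in Step 2 needs.

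The missing idea is to decouple the cutoff scale from the tail scale. Keep $\eta$ between $B_r$ and $B_{3r/2}$, and take $k=(v)_{B_{2r}\setminus B_r}$, so that $r^{-sp}\int_{B_{2r}}|v-k|^p\le C[\Phi(2r)-\Phi(r)]$. Absorb the interactions of $B_{3r/2}$ with $B_{Mr}\setminus B_{2r}$ into $\Phi(Mr)-\Phi(r)$ by Young's inequality. Put only $\mathbb{R}^n\setminus B_{Mr}$ into the tail, which then carries the factor $M^{-sp}(\log M)^{p-1}$. This yields
\[
\Phi(r)\le\theta\,\Phi(Mr)+\epsilon_M\Big[\Phi(Mr)+\mathrm{Tail}_{p-1}\big(v-(v)_{B_{Mr}};B_{Mr}\big)^p\Big],
\]
with $\theta<1$ independent of $M$ and $\epsilon_M\to0$. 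Combine this with $\mathrm{Tail}_{p-1}(v-(v)_{B_\rho};B_\rho)^p\le C\sum_{j\ge1}2^{-\kappa j}\Phi(2^j\rho)$ plus a far part. A discrete iteration with the ansatz $\Phi(2^{-k})\le K\lambda^k$, $\lambda<1$ close to $1$, then closes once $M$ is fixed large. Either carry this out, or cite a H\"older estimate for the system as the paper does.
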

\begin{proof}
    First, we want to prove that there is a constant $\beta=\beta(n,s,\Lambda,N)$ such that $\beta q<sp$ and
    \begin{align}\label{ineq0.fracdecay}
       \|v-(v)_{B_{1/4}}\|_{C^{\beta}(B_{1/4})}\leq cE_{p}(v;B_{3/8}),
    \end{align}
    where we write 
    \begin{align*}
        E_p(v;B_{r}(z))&\coloneqq\left(\dashint_{B_{r}(z)}|v-(v)_{B_{r}(z)}|^{p}\,dx\right)^{\frac1{p}}+\mathrm{Tail}_{p-1}(v-(v)_{B_{r}(z)};B_{r}(z)).
    \end{align*}
    Let us choose a cut-off function $\xi\in C_c^\infty(B_{4R}(x_0))$ with $B_{4R}(x_0)\Subset B_1$ and $\xi\equiv 1$ on $B_{3R}(x_0)$. Then by \cite[Lemma 2.6]{DieKimLeeNow24d}, $w=u\xi$ is a weak solution to
    \begin{align*}
        \mathcal{L}w=g\quad\text{in }B_{2R}(x_0),
    \end{align*}
    where
    \begin{align}\label{ineq1.fracdecay}
        \dashint_{B_{5R/2}(x_0)}|g|^{p'}\,dx\leq cR^{-spp'}\left(\dashint_{B_{5R/2}(x_0)}|v|^p\,dx+\mathrm{Tail}(v;B_{3R}(x_0))\right)
    \end{align}
    for some constant $c=c(n,s,\Lambda,N)$.
    Thus, by following the same lines as in the proof of \cite[Theorem 1.6]{Vin25} with $|x-y|^{-(n+sp)}$ replaced by $a(x,y)|x-y|^{-(n+sp)}$, we have 
    \begin{align*}
        \|w\|_{L^\infty(B_{R}(x_0))}+[R^{\alpha}w]_{C^{\alpha}(B_{R}(x_0))}&\leq cR^{s}\left(\int_{\bbR^n}\dashint_{B_{2R}(x_0)}|d_sw|^p\frac{\,dx\,dy}{|x-y|^n}\right)^{\frac1p}\\
        &\quad+c\left( \dashint_{B_{5R/2}(x_0)}|R^{sp}g|^{p'}\,dx\right)^{\frac1{p-1}},
    \end{align*}
    where $c=c(n,s,\Lambda,N)$. We now use the fact that $w\equiv 0$ on $B_{4R}(x_0)$ and \eqref{ineq1.fracdecay}, to derive 
    \begin{align*}
        \|v\|_{L^\infty(B_R(x_0))}+[R^{\alpha}v]_{C^\alpha(B_R(x_0))}\leq c\left(\dashint_{B_{4R}(x_0)}|v|^p\,dx+\mathrm{Tail}_{p-1}(v;B_{4R}(x_0))\right)
    \end{align*}
    for any $B_{4R}(x_0)\Subset B_1$. By a standard covering argument together with the fact that $v-(v)_{B_{1/4}}$ is a weak solution to $\mathcal{L}(v-(v)_{B_{1/4}})=0$, we obtain \eqref{ineq0.fracdecay} for any $\beta\leq\alpha$.

    We are now ready to prove \eqref{goal.fracdecay}. To this end, we assume $\beta q<sp$ and we note from \eqref{ineq0.fracdecay} that
\begin{equation}\label{ineq2.fracdecay} 
\begin{aligned}
    \mathcal{E}(v;B_\tau)&\leq c\left[\tau^{sp}\dashint_{B_\tau}\int_{B_\tau}[ v]_{C^\beta(B_{1/4})}^p\frac{\,dx\,dy}{|x-y|^{n-\beta p+sp}}+\mathrm{Tail}_q(v;B_\tau)^p\right]\\
    &\leq c\left(\tau^{p\beta}E_p(v;B_{3/8})^p+{\underbrace{\mathrm{Tail}_q(v;B_\tau)^p}_{\eqqcolon I}}\right)
\end{aligned}
\end{equation}
for some constant $c=c(n,s,\Lambda,N,q)$. After a few simple computations, we get
\begin{align*}
    I\leq c\left[\sum_{k=0}^{j}2^{-spk}\dashint_{B_{2^k\tau}}|v-(v)_{B_{2^k\tau}}|^q\,dx+2^{-spj}\int_{\bbR^n\setminus B_{2^j\tau}}\frac{|v-(v)_{B_{2^j\tau}}|^q}{|y|^{n+sp}}\,dy\right]^{\frac{p}q}
\end{align*}
for some constant $c=c(n,s,N)$, where $j$ is the nonnegative integer such that
\begin{equation*}
    1/8< 2^{j}\tau\leq 1/4.
\end{equation*}
Using \eqref{ineq0.fracdecay} , we further estimate $I$ as 
\begin{align*}
    I&\leq c\left[\sum_{k=0}^{j}2^{(-sp+\beta q)k}\tau^{\beta q}E_p(v;B_{3/8})^q+\tau^{sp}\mathrm{Tail}_q(v-(v)_{B_{3/8}};B_{3/8})^q\right]^\frac{p}{q}\\
    &\leq c\tau^{\beta p}\left(E_p(v;B_{3/8})^p+\mathrm{Tail}_q(v-(v)_{B_{3/8}};B_{3/8})^{p}\right),
\end{align*}
where we have also used the fact that $sp>\beta q$.
By plugging the estimate $I$ into \eqref{ineq2.fracdecay}, we have 
\begin{align}\label{ineq3.fracdecay}
    \mathcal{E}(v;B_\tau)\leq c\tau^{p\beta}\left(E_p(v;B_{3/8})^p+\mathrm{Tail}_q(v-(v)_{B_{3/8}};B_{3/8})^{p}\right).
\end{align}
By using Poincar\'e's inequality given in Lemma \ref{lem.spi}, H\"older's inequality and the estimate $I$ given in Lemma \ref{lem.reltildeee}, we have
\begin{equation}\label{ineq4.fracdecay}
    E_p(v;B_{3/8})^p+\mathrm{Tail}_q(v-(v)_{B_{3/8}};B_{3/8})^{p}\leq c\mathcal{E}(v;B_{1/2}).
\end{equation}
Combining the estimates \eqref{ineq3.fracdecay} and \eqref{ineq4.fracdecay} yields \eqref{goal.fracdecay}, which completes the proof.
\end{proof}

\section{Decay estimates}
In this section, we will prove $\epsilon$-regularity for certain nonlocal harmonic map systems. We use the following notation:
\begin{align}\label{defn.m20}
    [u]_{M^{p,\lambda}(B_R(x_0))}\coloneqq \sup_{B_\rho(z)\subset B_R(x_0)}\left(\rho^{-\lambda}{\mathcal{E}}(u;B_\rho(z))\right)^{\frac1p}
\end{align}
for any $\lambda\in[0,n)$, where the functional $\mathcal{E}(u;\cdot)$ is determined in \eqref{defn.cale}.

Then, we observe from the Poincar\'e inequality given in Lemma \ref{lem.spi} that
\begin{equation}\label{prop.bmosmall}
\begin{aligned}
    [u]_{BMO(B_R(x_0))}&\leq \sup_{B_\rho(z)\subset B_R(x_0)}\left(\dashint_{B_\rho(z)}|u-(u)_{B_{\rho}(z)}|^p\,dx\right)^{\frac1p}\\
    &\leq c\sup_{B_\rho(z)\subset B_R(x_0)}\rho^{(sp-n)/p}[u]_{W^{s,p}(B_\rho(z))}.
\end{aligned}
\end{equation}
We now give a simple inequality about the BMO semi-norm.
\begin{lemma}\label{lem.locbmo}
    Let $\psi\in C_c^\infty(B_{3/4})$ with $\psi\equiv 1$ on $B_{5/8}$. Then we have 
    \begin{align*}
        [(u-(u)_{B_{1}})\psi]_{BMO(\bbR^n)}\leq c\left([u]_{BMO(B_{1})}+[u]_{M^{p,0}(B_{1})}\right)
    \end{align*}
    for some constant $c=c(n,s)$.
\end{lemma}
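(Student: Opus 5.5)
Set $f\coloneqq (u-(u)_{B_1})\psi$. The plan is to estimate the mean oscillation of $f$ on an arbitrary ball $B_r(z)\subset\bbR^n$, splitting into cases according to where the ball sits relative to $B_{3/4}$ and how large it is. Two facts will be used throughout. First, by the Poincar\'e inequality (Lemma \ref{lem.spi}) together with the definition \eqref{defn.m20} with $\lambda=0$,
\begin{equation*}
\dashint_{B_1}|u-(u)_{B_1}|\,dx\leq c\left(\dashint_{B_1}|u-(u)_{B_1}|^p\,dx\right)^{\frac1p}\leq c\,\mathcal{E}(u;B_1)^{\frac1p}\leq c[u]_{M^{p,0}(B_1)}.
\end{equation*}
The same bound holds with $[u]_{BMO(B_1)}$ on the right. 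Second, $\psi$ is smooth, with $\|\psi\|_{L^\infty}+\|\nabla\psi\|_{L^\infty}\leq c(n)$.

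\emph{Case 1: $B_r(z)\cap B_{3/4}=\emptyset$.} Then $f\equiv0$ on $B_r(z)$ and its oscillation there vanishes.

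\emph{Case 2: $r\geq 1/8$ and $B_r(z)\cap B_{3/4}\neq\emptyset$.} Since $f$ is supported in $B_{3/4}$, we have
\begin{equation*}
\dashint_{B_r(z)}|f-(f)_{B_r(z)}|\leq 2\dashint_{B_r(z)}|f|\leq c\int_{B_1}|u-(u)_{B_1}|\,dx\leq c[u]_{BMO(B_1)}.
\end{equation*}
The middle step uses $|B_r(z)|\geq c(n)^{-1}$.

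\emph{Case 3: $r<1/8$ and $B_r(z)\cap B_{3/4}\neq\emptyset$.} Then $B_r(z)\subset B_1$. Write $f=(u-(u)_{B_r(z)})\psi+((u)_{B_r(z)}-(u)_{B_1})\psi$.
\begin{itemize}
\item The first term has mean oscillation at most $2\|\psi\|_\infty\dashint_{B_r(z)}|u-(u)_{B_r(z)}|\leq c[u]_{BMO(B_1)}$.
\item For the second term, the constant $(u)_{B_r(z)}-(u)_{B_1}$ multiplies $\psi$, whose oscillation on $B_r(z)$ is at most $cr$. So its mean oscillation is at most $c\,r\,|(u)_{B_r(z)}-(u)_{B_1}|$.
\end{itemize}
The main obstacle is this last product, because the difference of averages across scales can grow logarithmically in $1/r$. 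To control it, use the chain of balls $B_{2^kr}(z)$, $k=0,\dots,K$, with $2^Kr\sim1/8$. All of these lie in $B_1$, since $|z|<3/4+1/8$. Consecutive averages differ by at most $c[u]_{BMO(B_1)}$. The last ball $B_{2^Kr}(z)$ is comparable to $B_1$ in measure, so its average differs from $(u)_{B_1}$ by at most $c\dashint_{B_1}|u-(u)_{B_1}|\leq c[u]_{M^{p,0}(B_1)}$. Hence
\begin{equation*}
|(u)_{B_r(z)}-(u)_{B_1}|\leq c\log(2/r)[u]_{BMO(B_1)}+c[u]_{M^{p,0}(B_1)},
\end{equation*}
and therefore
\begin{equation*}
r\,|(u)_{B_r(z)}-(u)_{B_1}|\leq c\,r\log(2/r)[u]_{BMO(B_1)}+c[u]_{M^{p,0}(B_1)}.
\end{equation*}
Since $r\log(2/r)$ is bounded for $r<1/8$, this is $\leq c([u]_{BMO(B_1)}+[u]_{M^{p,0}(B_1)})$.

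Taking the supremum over all balls in the three cases gives the claim. The constant depends only on $n$, $\psi$ (which is fixed), and the Poincar\'e constant from Lemma \ref{lem.spi}.
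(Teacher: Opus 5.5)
Your argument is correct. It agrees with the paper's on large balls and on balls where $\psi$ vanishes (your Cases 1 and 2), but it handles small balls $B_r(z)\subset B_1$ by a genuinely different route.

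The paper applies the fractional Sobolev--Poincar\'e inequality to $v\psi$, with $v=u-(u)_{B_1}$, and splits the Gagliardo seminorm of the product, obtaining
\[
\dashint_{B_r(z)}|v\psi-(v\psi)_{B_r(z)}|\le c\,r^{s-n/p}[v]_{W^{s,p}(B_r(z))}+c\,r^{1-n/p}\|v\|_{L^p(B_r(z))}.
\]
The first term is where $[u]_{M^{p,0}(B_1)}$ enters. The second term plays the role of your $((u)_{B_r(z)}-(u)_{B_1})\psi$. It is bounded by $\|v\|_{L^p(B_1)}$ when $n\le p$, or, via H\"older, by $\|v\|_{L^n(B_1)}$ when $n>p$. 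Either norm is then bounded by $c[u]_{BMO(B_1)}$ through John--Nirenberg \eqref{prop.bmo}.

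So the paper controls the logarithmic drift of averages using the higher integrability of BMO; note that $r(\dashint_{B_r(z)}|v|^n)^{1/n}=c(n)\|v\|_{L^n(B_r(z))}\le c(n)\|v\|_{L^n(B_1)}$. You control it by telescoping along dyadic balls and using $r\log(2/r)\le c$.

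Your route is more elementary, needing no fractional Poincar\'e, no product rule and no John--Nirenberg. It also yields a slightly stronger statement. Since $\dashint_{B_1}|u-(u)_{B_1}|\le[u]_{BMO(B_1)}$, every occurrence of $[u]_{M^{p,0}(B_1)}$ in your proof can be dropped, giving $[(u-(u)_{B_1})\psi]_{BMO(\bbR^n)}\le c[u]_{BMO(B_1)}$. For the paper this makes no difference, because \eqref{prop.bmosmall} gives $[u]_{BMO(B_1)}\le c[u]_{M^{p,0}(B_1)}$ anyway.

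In both proofs the constant also depends on the fixed cutoff through $\|\nabla\psi\|_{L^\infty}$, which the statement leaves implicit.
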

\begin{proof}
Let us write $v\coloneqq u-(u)_{B_{1}}$. Now, we want to show that for any $B_r(z)\subset \bbR^n$,
\begin{equation}\label{ineq0.locbmo}
    \sup_{B_r(z)}\dashint_{B_r(z)}|v\psi-(v\psi)_{B_r(z)}|\,dx\leq c\left([u]_{BMO(B_1)}+[u]_{M^{p,0}(B_1)}\right)
\end{equation}
for some constant $c=c(n,s)$. Let $r>1/64$ to see that
    \begin{align}\label{ineq01.locbmo}
        \dashint_{B_r(z)}|v\psi-(v\psi)_{B_r(z)}|\leq c\int_{B_r(z)}|v\psi|\leq c\dashint_{B_1}|u-(u)_{B_1}|\leq c[u]_{BMO(B_1)}.
    \end{align}
    Suppose $r\leq 1/64$. If $B_r(z)\cap B_{7/8}=\emptyset$, then $\psi\equiv 0$ on $B_r(z)$. Therefore, we get
    \begin{align}\label{ineq.012.locbmo}
        \dashint_{B_r(z)}|v\psi-(v\psi)_{B_r(z)}|=0.
    \end{align}
    We now assume $B_r(z)\cap B_{7/8}\not= \emptyset$. Then we have 
    \begin{equation}\label{inc.locbmo}
        B_r(z)\subset B_1
    \end{equation}
    and
    \begin{align*}
        I\coloneqq\dashint_{B_{r}(z)}|v\psi-(v\psi)_{B_r(z)}|&\leq cr^{s-n/p}[v\psi]_{W^{s,p}(B_r(z))}\\
        &\leq c\left(r^{s-n/p}[v]_{W^{s,p}(B_r(z))}+cr^{1-n/p}\|v\|_{L^p(B_r(z))}\right),
    \end{align*}
    where $c=c(n,s)$.
    If $n\leq p$, we deduce from \eqref{prop.bmo} that 
    \begin{equation}\label{ineq1.locbmo}
    \begin{aligned}
        I\leq c(R^{sp-n}[u]_{W^{s,p}(B_r(z))}+\|v\|_{L^p(B_1)})&\leq c(R^{sp-n}[u]_{W^{s,p}(B_r(z))}+\|u-(u)_{B_1}\|_{L^p(B_1)})\\
        &\leq c([u]_{M^{p,0}(B_1)}+[u]_{BMO(B_1)}).
    \end{aligned}
    \end{equation}
    If $n>p$, we now use the H\"older's inequality to see that 
    \begin{align*}
        I&\leq c\left(r^{s-n/p}[v]_{W^{s,p}(B_r(z))}+\|v\|_{L^n(B_r(z))}\right)\\
        &\leq c\left(r^{s-n/p}[u]_{W^{s,p}(B_r(z))}+\|u-(u)_{B_1}\|_{L^n(B_1)}\right).
    \end{align*}
    As in the proof of \eqref{ineq1.locbmo}, we get 
    \begin{align}\label{ineq2.locbmo}
        I\leq c([u]_{M^{p,0}(B_1)}+[u]_{BMO(B_1)})
    \end{align}
    for some constant $c=c(n,s,p)$. Combining all the estimates \eqref{ineq01.locbmo}, \eqref{ineq.012.locbmo} and \eqref{ineq2.locbmo} gives \eqref{ineq0.locbmo}, which implies the desired estimate.
\end{proof}
Next, we want to prove decay estimates for the functional $\mathcal{E}(u;\cdot)$ under a smallness assumption, where $u$ is a weak solution to \eqref{eq.main}. Before proving this, we give the another version of the div-curl lemma given by Lemma \ref{lem.h1norm}  by specifying the function $F$ as in  \eqref{F.specify}, which will be useful for proving the desired decay estimates.
\begin{lemma}\label{lem.h1norm2}
  Let $p=\frac{n}{s}$ and let us fix $p_0\in\left(1,1+\frac{s}{2(n-s)}\right]$. Let us assume that 
   \begin{equation}\label{F.specify}
       F(x,y)\coloneqq |d_su|^{p-2}(d_s{u}^j(x,y)u^i(x)-d_s{u}^i(x,y)u^j(x))a(x,y)
   \end{equation}satisfy 
    \begin{equation*}
        -\divergence_s F=g\quad\text{in }B_1,
    \end{equation*}
    for some $g\in L^{p_0}(B_1)$, where $u^i\in W^{s,p}(B_1)\cap L^q_{sp}(\bbR^n)$ for each $i$. 
    Then for any $\psi\in \widehat{W}^{s,p}_{00}(B_{3/4})$ and $w\in \widehat{W}^{s,p}_{00}(B_{3/4})$, we have
    \begin{equation}\label{goal.h1norm2}
    \begin{aligned}
        \left|\int_{\bbR^n}(F\odot d_s w)\psi\,dx\right|&\leq c\left(\mathcal{E}(u;B_{1})^{\frac1{p'}}+\|g\|_{L^{p_0}(B_1)}\right){\widetilde{\mathcal{E}}(w;B_1)}^{\frac1p}\\
        &\qquad\times\left[[\psi]_{BMO(\bbR^n)}+\|\psi\|_{L^1(\bbR^n)}\right]
    \end{aligned}
    \end{equation}
    for some constant $c=c(n,s,\Lambda,p,q,p_0)$, where the dot product $\odot $  is defined in \eqref{defn.odot}.
\end{lemma}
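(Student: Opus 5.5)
The natural route is to reduce Lemma \ref{lem.h1norm2} to Lemma \ref{lem.h1norm}. Its right-hand side already has the required form, with $\|F\|_{L^{p'}_{\mathrm{od}}(B_1)}$ in place of $\mathcal{E}(u;B_1)^{1/p'}$. So the task is to show that the specific $F$ of \eqref{F.specify} lies in $L^{p'}_{\mathrm{od}}(B_1)$ with
\begin{equation*}
\|F\|_{L^{p'}_{\mathrm{od}}(B_1)}\leq c\,\mathcal{E}(u;B_1)^{\frac1{p'}},
\end{equation*}
or at least that the part of $F$ which the pairing $\int (F\odot d_sw)\psi$ actually sees obeys such a bound. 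Antisymmetry holds: $d_su$ is odd in $(x,y)$ and $a$ is symmetric. However, $u^i(x)$ is not symmetric. I would therefore first rewrite
\begin{equation*}
d_su^j(x,y)u^i(x)-d_su^i(x,y)u^j(x)=\tfrac12\big(d_su^j(u^i(x)+u^i(y))-d_su^i(u^j(x)+u^j(y))\big)+\tfrac12\big(d_su^j\,d_su^i-d_su^i\,d_su^j\big)|x-y|^s .
\end{equation*}
The last bracket vanishes identically, so $F$ equals the manifestly antisymmetric expression built from the averages $\frac{u^i(x)+u^i(y)}{2}$. Since $|u|\equiv1$, these averages are bounded by $1$, which gives the pointwise bound $|F|\leq c\Lambda|d_su|^{p-1}$.

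The main obstacle is the $L^{p'}_{\mathrm{od}}$-norm on all of $\mathcal{C}B_1$, because the long-range part is not part of $\mathcal{E}(u;B_1)$. Splitting $\mathcal{C}B_1$ into $B_1\times B_1$ and the two mixed regions, the pointwise bound gives
\begin{equation*}
\iint_{\mathcal{C}B_1}\frac{|F|^{p'}}{|x-y|^n}\leq c\iint_{B_1\times B_1}\frac{|d_su|^p}{|x-y|^n}+c\int_{B_1}\int_{\bbR^n\setminus B_1}\frac{|d_su|^p}{|x-y|^n}.
\end{equation*}
The first term is bounded by $c\,\mathcal{E}(u;B_1)$. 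The second is essentially $\widetilde{\mathcal{E}}$ and is controlled only by a tail of order $p$, while $\mathcal{E}$ contains $\mathrm{Tail}_q$ with $q$ close to $p-1$. Moreover, near $\partial B_1$ the mixed integral is not controlled by $\mathcal{E}(u;B_1)$ at all.

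To handle this I would exploit that $\psi$ and $w$ are supported in $B_{3/4}$, and I would not apply Lemma \ref{lem.h1norm} on $B_1$ directly. Instead, I would rerun its proof on a smaller ball, say $B_{7/8}$ with $\psi,w$ supported in $B_{3/4}$, so that only $F(x,y)$ with $x\in B_{3/4}$ or $y\in B_{3/4}$ enters the pairing. For $x\in B_{3/4}$ and $y\notin B_{7/8}$ we have $|x-y|\eqsim 1+|y|$. Then $|d_su|^{p-1}\leq c(|u(x)-(u)_{B_1}|+|u(y)-(u)_{B_1}|)^{p-1}|x-y|^{-s(p-1)}$, and using $|u|\le1$ to trade the power $p$ for $q\ge p-1$ (as in Lemma \ref{lem.reltildeee}) gives
\begin{equation*}
\int_{B_{3/4}}\int_{\bbR^n\setminus B_{7/8}}\frac{|F|^{p'}}{|x-y|^n}\leq c\Big(\dashint_{B_1}|u-(u)_{B_1}|^p+\mathrm{Tail}_q(u-(u)_{B_1};B_1)^q\Big).
\end{equation*}
By the Poincar\'e inequality (Lemma \ref{lem.spi}) and $\mathcal{E}\lesssim1$, this is at most $c\,\mathcal{E}(u;B_1)^{\min\{1,q/p\}}$. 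Since $\mathcal{E}(u;B_1)$ is bounded (because $|u|\le 1$), I would absorb the power discrepancy into the constant. Here one must check whether the stated exponent $1/p'$ survives, or whether one only gets $\mathcal{E}^{q/(pp')}$. If needed, I would instead invoke the $J_{2,i}$-type splitting from the proof of Lemma \ref{lem.h1norm} directly on $F$.

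With this bound in hand, Lemma \ref{lem.h1norm} (run on the smaller ball, with $g$ and $p_0$ unchanged) immediately gives \eqref{goal.h1norm2}. The constant depends on $\Lambda$ through $a$ and on $q$ through the tail estimate. The test function $\psi\in\widehat{W}^{s,p}_{00}(B_{3/4})$ is reached by density from $C_c^\infty(B_{3/4})$ in the BMO$+L^1$ norm on the right.
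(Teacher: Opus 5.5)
Your antisymmetry identity and the pointwise bound $|F|\le 2\Lambda|d_su|^{p-1}$ are correct. The gap is the exponent, which is exactly the point you left open: reducing to Lemma \ref{lem.h1norm} through an $L^{p'}_{\mathrm{od}}$-bound on $F$, whether on $B_1$ or on a smaller ball, cannot produce $\mathcal{E}(u;B_1)^{1/p'}$. The $L^{p'}_{\mathrm{od}}$-norm raises the far-field part of $F$ to the power $p'$, and there $|F|^{p'}\approx|d_su|^p$. So the tail enters with order $p$, whereas $\mathcal{E}(u;B_1)$ only contains $\mathrm{Tail}_q$ with $q<p$. As you computed, $|u|\le 1$ then only gives $\|F\|^{p'}_{L^{p'}_{\mathrm{od}}(B_{7/8})}\lesssim \mathcal{E}(u;B_1)+\mathcal{E}(u;B_1)^{q/p}$, and this cannot be improved. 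Take $u\equiv e_2$ on a set $A\subset\mathbb{R}^n\setminus B_{10}$ and $u\equiv e_1$ elsewhere, and put $m:=\int_A|y|^{-n-sp}\,dy$. Then $\mathcal{E}(u;B_1)\approx m^{p/q}$, while $B_{7/8}\times A$ contributes $\approx m$ to $\|F\|^{p'}_{L^{p'}_{\mathrm{od}}(B_{7/8})}$ for $(i,j)=(1,2)$.

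Absorbing the discrepancy into the constant goes in the wrong direction. For $\mathcal{E}\le 1$ one has $\mathcal{E}^{q/p}\ge\mathcal{E}$, and an upper bound on $\mathcal{E}$ only lets you dominate higher powers by lower ones. Moreover, $|u|\le1$ bounds the tail part of $\mathcal{E}$, not its Gagliardo part. The small-$\mathcal{E}$ regime is precisely the one that matters. In Lemma \ref{lem.decayu}, a factor $\mathcal{E}^{q/(pp')}$ in place of $\mathcal{E}^{1/p'}$ would lead, after Young's inequality, to a term $\epsilon\,\mathcal{E}(u;B_1)^{q/p}$. That term is not controlled by a small multiple of $\mathcal{E}(u;B_1)$, so the decay estimate would not close.

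The missing idea is to truncate the vector field rather than shrink the ball. Take $\eta\in C_c^\infty(B_{15/16})$ with $\eta\equiv1$ on $B_{7/8}$, set $\widetilde F(x,y):=F(x,y)\eta(x)\eta(y)$, and split $\int(F\odot d_sw)\psi=\int(\widetilde F\odot d_sw)\psi+\int((F-\widetilde F)\odot d_sw)\psi$.

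The field $\widetilde F$ is antisymmetric and supported in $B_1\times B_1$. Hence $\|\widetilde F\|^{p'}_{L^{p'}_{\mathrm{od}}}\lesssim\iint_{B_1\times B_1}|d_su|^p|x-y|^{-n}\,dx\,dy\le\mathcal{E}(u;B_1)$, with no tail at all. It satisfies $-\mathrm{div}_s\widetilde F=g+G$ in $B_{13/16}$, where $G(x)$ is an integral over $y\notin B_{7/8}$ of $F(x,y)$ to the \emph{first} power. Lemma \ref{lem.h1norm}, after adjusting radii, applies to $\widetilde F$.

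The remainder only involves $x\in B_{3/4}$ and $y\notin B_{7/8}$, where $|x-y|\approx 1+|y|$. It is estimated directly by H\"older's inequality, again using $|F|\lesssim|d_su|^{p-1}$ only linearly. The tail then enters with order $p-1\le q$, and $\int_{\mathbb{R}^n\setminus B_1}|u-(u)_{B_1}|^{p-1}|y|^{-n-sp}\,dy\lesssim\mathrm{Tail}_q(u-(u)_{B_1};B_1)^{p-1}\le\mathcal{E}(u;B_1)^{1/p'}$, which is the stated exponent. The same computation gives $\|G\|_{L^{p_0}}\lesssim\mathcal{E}(u;B_1)^{1/p'}$, since $(p-1)p_0<p$.

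Your closing remark about a splitting ``directly on $F$'' points in this direction. But without the truncated field and its modified divergence equation, there is nothing to which Lemma \ref{lem.h1norm} can be applied with the right norm.
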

\begin{proof}
First, we give a density argument to ensure that \eqref{goal.h1norm2} follows when \eqref{goal.h1norm2} holds for $\psi\in C^\infty_c(B_{7/8})$. To this end, note that there is a smooth function $\phi\in C^\infty_c(B_1)$ such that $\phi\geq0$ and $\int_{B_1}\phi=1$. Then we observe 
\begin{align*}
    \dashint_{B_r(z)}|\psi_{\epsilon}-(\psi_\epsilon)_{B_r(z)}|\,dx&=\dashint_{B_r(z)}\left|\dashint_{B_r(z)}(\psi_{\epsilon}(x)-\psi_\epsilon(y))\,dy\right|\,dx\\
    &=\dashint_{B_r(z)}\left|\dashint_{B_r(z)}\int_{\bbR^n}(\psi(x-\xi)-\psi(y-\xi))\phi_\epsilon(\xi)\,d\xi\,dy\right|\,dx\\
    &= \dashint_{B_r(z)}\left|\int_{\bbR^n}\dashint_{B_r(z)}(\psi(x-\xi)-\psi(y-\xi))\phi_\epsilon(\xi)\,dy\,d\xi\right|\,dx,
\end{align*}
where we write $\psi_\epsilon(x)=(\psi*\phi_\epsilon)(x)$ with $\phi_\epsilon(x)\coloneqq \epsilon^{-n}\phi(x/\epsilon)$. Moreover, we further estimate 
\begin{align*}
    \dashint_{B_r(z)}|\psi_{\epsilon}-(\psi_\epsilon)_{B_r(z)}|\,dx&\leq \int_{\bbR^n}\int_{B_r(z)}\left|\dashint_{B_r(z)}(\psi(x-\xi)-\psi(y-\xi))\,dy\right|\,dx\phi_\epsilon(\xi)\,d\xi\\
    &=\int_{\bbR^n}\int_{B_r(z+\xi)}\left|\dashint_{B_r(z+\xi)}(\psi(x)-\psi(y))\,dy\right|\,dx\phi_\epsilon(\xi)\,d\xi\\
    &\leq \int_{\bbR^n}[\psi]_{BMO(\bbR^n)}\phi_\epsilon(\xi)\,d\xi=[\psi]_{BMO(\bbR^n)}.
\end{align*}
From this, it suffices to show the estimate \eqref{goal.h1norm2} for the function $\psi\in C^\infty_c(B_{7/8})$.

    Let us choose a cut off function $\eta(x)\in C_{c}^\infty(B_{15/16})$ such that $\eta(x)\equiv 1$ on $B_{7/8}$. Let us write $\widetilde{F}(x,y)=F(x,y)\eta(x)\eta(y)$. Note that $\widetilde{F}(x,y)=-\widetilde{F}(y,x)$, as ${F}(x,y)=-{F}(y,x)$. Then we have 
    \begin{align*}
        -\divergence_s(\widetilde{F})=g+G\quad\text{in }B_{13/16},
    \end{align*}
    where 
    \begin{align*}
        G(x,y)\coloneqq 2\int_{\bbR^n\setminus B_{7/8}}\frac{\widetilde{F}(x,y)}{|x-y|^{n+s}}a(x-y)\,dy-2\int_{\bbR^n\setminus B_{7/8}}\frac{{F}(x,y)}{|x-y|^{n+s}}a(x-y)\,dy
    \end{align*}
    for any $x\in B_{13/16}$. Using this function $\widetilde{F}$, we have 
    \begin{equation}\label{estij.h1norm2}
    \begin{aligned}
        \left|\int_{\bbR^n}(F\odot d_s w)\psi\,dx\right|&\leq \left|\int_{\bbR^n}(\widetilde{F}\odot d_s w)\psi\,dx\right|+\left|\int_{\bbR^n}((\widetilde{F}-F)\odot d_s w)\psi\,dx\right|\\
        &\coloneqq I+J.
    \end{aligned}
    \end{equation}
    Then using Lemma \ref{lem.h1norm}, we estimate $I$ as  
    \begin{equation}\label{estI.h1norm2}
    \begin{aligned}
        I&\leq c\left(\|\widetilde{F}\|_{L^{p'}_{\mathrm{od}}(B_1)}+\|g\|_{L^{p_0}(B_1)}\right){\widetilde{\mathcal{E}}(w;B_1)}^{\frac1p}\left[[\psi]_{BMO(\bbR^n)}+\|\psi\|_{L^1(\bbR^n)}\right]\\
&\leq c\left(\mathcal{E}(u;B_{1})^{\frac1{p'}}+\|g\|_{L^{p_0}(B_1)}\right){\widetilde{\mathcal{E}}(w;B_1)}^{\frac1p}\left[[\psi]_{BMO(\bbR^n)}+\|\psi\|_{L^1(\bbR^n)}\right]
    \end{aligned}
    \end{equation}
    for some constant $c=c(n,s,\Lambda,q,p_0)$, as $\widetilde{F}(x,y)=0$ if $(x,y)\notin B_{1}\times B_1$.
We are going to estimate $J$ as
    \begin{align*}
        J&\leq c\int_{B_{3/4}}\int_{\bbR^n\setminus B_{7/8}}|F(x,y)|\frac{|w(x)|}{|x-y|^s}\psi(x)\frac{\,dy\,dx}{|x-y|^n}\\
        &\leq c\int_{\bbR^n\setminus B_{7/8}}\int_{B_{3/4}}\left(|u(x)-(u)_{B_1}|^{p-1}+|u(y)-(u)_{B_1}|^{p-1}\right)\frac{|w(x)|}{|y|^{n+sp}}|\psi(x)|\,dy\,dx\\
        &\leq c\int_{B_{3/4}}|u(x)-(u)_{B_1}|^{p-1}|w(x)||\psi(x)|\,dx\\
        &\quad+c\int_{\bbR^n\setminus B_{7/8}}|u(y)-(u)_{B_1}|^{p-1}\frac{\,dy}{|y|^{n+sp}}\int_{B_1}|w(x)||\psi(x)|\,dx\\
        &\eqqcolon J_1+J_2,
    \end{align*}
    where $c=c(n,s,\Lambda)$.
    By following the same lines as in the estimate of $J_{2,1}$ given in Lemma \ref{lem.h1norm} together with the fact that $w\equiv 0$ on $\bbR^n\setminus B_1$, we have 
    \begin{align*}
        J_1\leq c\mathcal{E}(u;B_{1})^{\frac1{p'}}\mathcal{E}(w;B_{1})^{\frac1p}\left[[\psi]_{BMO(\bbR^n)}+\|\psi\|_{L^1(\bbR^n)}\right].
    \end{align*}
    Also by the estimate $J_{2,2}$ given in Lemma \ref{lem.h1norm}, we deduce
    \begin{align*}
       I_2\leq c\mathcal{E}(u;B_{1})^{\frac1{p'}}\mathcal{E}(w;B_{1})^{\frac1p}\left[[\psi]_{BMO(\bbR^n)}+\|\psi\|_{L^1(\bbR^n)}\right].
    \end{align*}
    Thus, we have 
    \begin{align}\label{esti.h1norm2}
        J\leq c\mathcal{E}(u;B_{1})^{\frac1{p'}}\mathcal{E}(w;B_{1})^{\frac1p}\left[[\psi]_{BMO(\bbR^n)}+\|\psi\|_{L^1(\bbR^n)}\right].
    \end{align}
     Plugging the estimates \eqref{estI.h1norm2} and \eqref{esti.h1norm2} into \eqref{estij.h1norm2} leads to the desired result.
\end{proof}

We are now ready to prove the following decay estimates.
\begin{lemma}\label{lem.decayu} Let us fix $p=\frac{n}{s}$ and $p_0\in\left(1,1+\frac{s}{2(n-s)}\right]$. Recall the constants $q>p-1$ and $\beta<1$ determined in \eqref{cond.q} and Lemma \ref{lem.fracdecay}, respectively. Let $u\in \widehat{W}^{s,p}(B_{2R}(x_0);\bbR^N)$ with $|u|\equiv 1$ on $\bbR^n$ be a weak solution to 
\begin{equation}\label{eq1.dec}
    \mathcal{L}_{s,p}u=|\mathbf{d}_{s,a}u|^pu+\mu\quad\text{in }B_{R}(x_0).
\end{equation}
Suppose 
\begin{align}\label{cond.usmall}
    [u]_{M^{p,0}(B_{2R}(x_0))}\leq \epsilon
\end{align}
for some $\epsilon>0$. Then there is a sufficiently small $\epsilon_0=\epsilon_0(n,s,N,\Lambda,q,p_0)$ such that if $\epsilon\leq \epsilon_0$, then we have for any $\rho\in(0,1]$,
\begin{equation}\label{ineq.decayu}
\begin{aligned}
    \mathcal{E}(u;B_{\rho R}(x_0))&\leq c\left(\mathcal{A}(\rho,\epsilon)\mathcal{E}(u;B_R(x_0))+\frac{1}{\rho^{n-sp}}\left(\dashint_{B_R(x_0)}|R^{sp}\mu|^{p_0}\,dx\right)^{\frac{p}{p_0(p-1)}}\right)
\end{aligned}
\end{equation}
for some constant $c=c(n,s,\Lambda,N,q,p_0)$, where we write
\begin{align*}
    \mathcal{A}(\rho,\epsilon)\coloneqq \left[\rho^{p\beta}+\frac{\epsilon^{q-p+1}}{\rho^{n-sp}} \right].
\end{align*}
\end{lemma}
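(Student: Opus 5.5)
By scaling and translation we may assume $x_0=0$ and $R=1$. We compare $u$ with the solution of the homogeneous system having the same exterior data. Let $v\in \widehat{W}^{s,p}(B_1;\bbR^N)$ solve
\[
\mathcal{L}_{s,p}v=0 \quad\text{in } B_{1}, \qquad v=u \quad\text{on } \bbR^n\setminus B_{1},
\]
which exists by direct minimization. Since $p=n/s$, Lemma \ref{lem.fracdecay} applies to $v$ and gives $\mathcal{E}(v;B_\tau)\le c\tau^{p\beta}\mathcal{E}(v;B_{1/2})$. A standard energy comparison then gives $\mathcal{E}(v;B_{1/2})\le c\,\mathcal{E}(u;B_1)$, using $v-u$ as a test function and Lemma \ref{lem.consttail}. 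For the difference we write
\[
\mathcal{E}(u;B_\rho)\le c\,\mathcal{E}(v;B_\rho)+c\rho^{sp-n}\widetilde{\mathcal{E}}(u-v;B_1).
\]
This reduces the lemma to estimating $w\coloneqq u-v\in\widehat{W}^{s,p}_{00}(B_1)$. The term $\rho^{sp-n}\widetilde{\mathcal{E}}(u-v;B_1)$ is where the factor $\rho^{-(n-sp)}$ in \eqref{ineq.decayu} comes from.

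To bound $w$ we test the equations for $u$ and $v$ with $w$. Monotonicity of $t\mapsto|t|^{p-2}t$ gives
\[
\widetilde{\mathcal{E}}(w;B_1)\lesssim \int_{B_1}|\mathbf{d}_{s,a}u|^p u\cdot w+\int_{B_1}\mu\cdot w
\]
when $p\ge2$. When $p<2$ we use the usual mixed estimate, bounding $\widetilde{\mathcal{E}}(w)$ by $(\cdots)^{2/p}$ times $\widetilde{\mathcal{E}}(u)^{(2-p)/p}$. The $\mu$-term is handled with H\"older and Sobolev--Poincar\'e (Lemma \ref{lem.spi}), using that $p^*$ can be arbitrarily large in the critical case. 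This yields
\[
\|\mu\|_{L^{p_0}}\,\widetilde{\mathcal{E}}(w)^{1/p},
\]
and Young's inequality turns it into the claimed term $\big(\dashint|\mu|^{p_0}\big)^{p/(p_0(p-1))}$.

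The main obstacle is the critical nonlinearity $|\mathbf{d}_{s,a}u|^p u\cdot w$, which lies only in $L^1$. Here I would use the harmonic-map structure. Since $|u|=1$, we have $u\cdot d_su(x,y)=\tfrac12|x-y|^{s}\,|d_su|^2(x,y)$ up to sign, so $|\mathbf{d}_{s,a}u|^pu^i$ can be rewritten, modulo a term controlled by $\sum_j (F^{ij}\odot d_s u^j)$, as $\sum_j (F^{ij}\odot d_s u^j)$ with $F^{ij}$ as in \eqref{F.specify}. Testing \eqref{eq1.dec} with $u^i\phi$ minus $u^j\phi$ gives $-\divergence_s F^{ij}=g^{ij}$, where $g^{ij}=\mu^iu^j-\mu^ju^i$, so $\|g\|_{L^{p_0}}\le c\|\mu\|_{L^{p_0}}$. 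The nonlinear term then becomes $\int (F\odot d_s(\text{cut-off }u))\,w$. We apply Lemma \ref{lem.h1norm2} with $\psi=w$, where $w$ is localized so that the lemma applies, and the remaining exterior pieces are absorbed into tail terms. We bound $[w]_{BMO}\le [u]_{BMO}+[v]_{BMO}\lesssim\epsilon$ using \eqref{prop.bmosmall}, \eqref{cond.usmall} and Lemma \ref{lem.locbmo}; for $v$ we also use the H\"older estimate of Lemma \ref{lem.fracdecay}. We bound $\|w\|_{L^1}\lesssim\widetilde{\mathcal{E}}(w)^{1/p}$. This gives a bound of the form
\[
\big(\mathcal{E}(u;B_1)^{1/p'}+\|\mu\|_{L^{p_0}}\big)\,\mathcal{E}(u;B_1)^{1/p}\,\epsilon,
\]
and the smallness is where $\epsilon$ enters. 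To obtain exactly $\epsilon^{q-p+1}\mathcal{E}(u;B_1)$, I expect to split $\mathcal{E}^{q/p}$ versus $\mathcal{E}$ via \eqref{ineq2.reltildeee}. Writing $\mathcal{E}(u;B_1)^{q/p}\le \epsilon^{q-p}\mathcal{E}(u;B_1)$ when $q\ge p$, or the analogous power otherwise, and using $\mathcal{E}\le\epsilon^p$, we arrive at $\widetilde{\mathcal{E}}(w;B_1)\le c\,\epsilon^{q-p+1}\mathcal{E}(u;B_1)+c(\cdots\mu\cdots)$. Absorption via Young's inequality is what requires $\epsilon\le\epsilon_0$.

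Finally we combine the pieces:
\[
\mathcal{E}(u;B_\rho)\le c\rho^{p\beta}\mathcal{E}(u;B_1)+c\rho^{sp-n}\big(\epsilon^{q-p+1}\mathcal{E}(u;B_1)+\|\mu\|^{p/(p-1)}_{L^{p_0}}\big).
\]
For $\rho\ge1/2$ the estimate is immediate from Lemma \ref{lem.consttail}. Rescaling back to general $x_0$ and $R$ produces the factor $R^{sp}$ on $\mu$ in \eqref{ineq.decayu}.
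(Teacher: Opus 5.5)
\textbf{The gap: the non-div-curl part of the nonlinearity.} Summing over $j$ and using $u(x)\cdot(u(x)-u(y))=\frac12|u(x)-u(y)|^2$ gives the exact identity
\[
|\mathbf{d}_{s,a}u|^pu^i=\sum_j F^{ij}\odot d_su^j+T^i,\qquad T^i(x)=\frac12\int_{\bbR^n}\frac{|u(x)-u(y)|^p}{|x-y|^{n+sp}}\,(u^i(x)-u^i(y))\,a(x,y)\,dy.
\]
The remainder $T^i$ is not controlled by the div-curl term, and Lemma \ref{lem.h1norm2} does not apply to it. Your proposal never estimates it.

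Test $T$ against $w$ and symmetrize. Its interior part is
\[
\iint_{B_{5/8}\times B_{5/8}}|u(x)-u(y)|^{p+1}|w(x)-w(y)|\,|x-y|^{-n-sp}\,dx\,dy,
\]
and H\"older bounds it by $[u]^{p+1}_{W^{\theta s,p/\theta}(B_{5/8})}[w]_{W^{s,p}}$ with $\theta=\frac{p-1}{p+1}$. Using only $|u|=1$ gives $[u]^{p-1}_{W^{s,p}(B_1)}\widetilde{\mathcal{E}}(w;B_1)^{1/p}\lesssim\mathcal{E}(u;B_1)^{1/p'}\widetilde{\mathcal{E}}(w;B_1)^{1/p}$. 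After Young this leaves $c\,\mathcal{E}(u;B_1)$ with no small factor. That puts a term $c\rho^{sp-n}\mathcal{E}(u;B_1)$ into the final estimate and destroys \eqref{ineq.decayu}.

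The missing ingredient is the BMO--Sobolev Gagliardo--Nirenberg inequality, Lemma \ref{lem.gagnir}:
\[
[u]^{p+1}_{W^{\theta s,p/\theta}(B_{5/8})}\le c\,[u]^2_{BMO(B_1)}[u]^{p-1}_{W^{s,p}(B_1)}.
\]
The factor $[u]^2_{BMO(B_1)}\lesssim\epsilon^2$, from \eqref{prop.bmosmall} and \eqref{cond.usmall}, is what makes this term absorbable. The exterior part of $T$ is then treated like the other tail pieces.

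\textbf{Further points to fix.}

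First, you solve the Dirichlet problem in $B_1$, so $w$ is supported in $B_1$. But Lemma \ref{lem.h1norm2} needs both functions supported in $B_{3/4}$, and the divergence equation holds only in $B_1=B_R$. Cutting $w$ off leaves a remainder near $\partial B_1$, where the integrand is only $L^1$ and the div-curl structure is unavailable, so you get no small factor. Solve in $B_{1/2}$ instead. Also, minimality of $v$ controls $\widetilde{\mathcal{E}}$, not $\mathcal{E}$, and would bring in $\mathcal{E}(u;B_2)^{q/p}$. Use the triangle inequality $\mathcal{E}(v;B_{1/4})\le c\,(\mathcal{E}(u;B_1)+\widetilde{\mathcal{E}}(w;B_{1/2}))$ instead.

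Second, $[v]_{BMO}\lesssim\epsilon$ does not follow from the interior H\"older estimate of Lemma \ref{lem.fracdecay}, which degenerates at the Dirichlet boundary. You do not need it. Since $sp=n$, you have $[w]_{BMO(\bbR^n)}+\|w\|_{L^1}\lesssim\widetilde{\mathcal{E}}(w;B_{1/2})^{1/p}$. The small factor then comes from $\widetilde{\mathcal{E}}(\bar u^j;B_1)^{1/p}\lesssim[u]_{W^{s,p}(B_1)}\lesssim\epsilon$, where $\bar u^j=(u^j-(u^j)_{B_1})\psi$.

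Third, $q<p$ always under \eqref{cond.q}, so your case $q\ge p$ never occurs. The exponent $q-p+1$ comes from $\mathcal{E}(u;B_1)^{q/p}\le\epsilon^{q-p+1}\mathcal{E}(u;B_1)^{1/p'}$ in the tail term, where $|u-(u)_{B_1}|^p$ is traded for $|u-(u)_{B_1}|^q$.

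\textbf{Comparison with the paper's route.} With these corrections, your direct use of Lemma \ref{lem.h1norm2} is a valid and slightly shorter alternative. You put the cut-off $u^j$ in the Sobolev slot and $w$ in the BMO slot. The paper uses the product rule and the divergence equation to put $w$ in the Sobolev slot and $\bar u^j$ in the BMO slot, at the price of the extra term $\int(u^i\mu^j-u^j\mu^i)\bar u^jw^i$. The paper's arrangement is the one that survives in the supercritical case $p=2$, $n>2s$, where $W^{s,2}$ does not embed scale-invariantly into BMO.
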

\begin{proof}
    We may assume $B_R(x_0)=B_1$. In addition, if $\rho>2^{-10}$, then \eqref{ineq.decayu} follows directly by taking the constant $c$ sufficiently large. Let us assume $\rho\in (0,2^{-10}]$.
   By standard variational methods, there is a unique weak solution $v\in \widehat{W}^{s,p}(B_{1/2};\bbR^N)$ to 
\begin{equation}\label{eq1.decayu}
\left\{
\begin{alignedat}{3}
\mathcal{L}v&= 0&&\qquad \mbox{in  $B_{1/2}$}, \\
v&=u&&\qquad  \mbox{in $\bbR^n\setminus B_{1/2}$}
\end{alignedat} \right.
\end{equation}
with $v-u\in \widetilde{W}_{00}^{s,p}(B_{1/2};\bbR^N)$.
Using the function $v$, we obtain
\begin{align*}
    \mathcal{E}(u;B_{\rho })\leq \mathcal{E}(u-v;B_{\rho })+\mathcal{E}(v;B_{\rho })\coloneqq I_1+I_2.
\end{align*}
In light of \eqref{const.tail} and \eqref{ineq1.reltildeee}, we have
\begin{align}\label{ineq1.decayu}
    I_1\leq c\rho^{sp-n}\mathcal{E}(u-v;B_{1/4 })\leq c\rho^{sp-n}\widetilde{\mathcal{E}}(u-v;B_{1/2}),
\end{align}
where $c=c(n,s,\Lambda,N)$.
By Lemma \ref{lem.fracdecay}, we next estimate $I_2$ as
\begin{align*}
    I_2\leq c\rho^{\beta p}\mathcal{E}(v;B_{1/4})\leq c\rho^{\beta p}\left[\mathcal{E}(u;B_{1/4})+\mathcal{E}(u-v;B_{1/4})\right]
\end{align*}
for some constant $c=c(n,s,\Lambda,N,,q)$. We now use \eqref{const.tail} and Lemma \ref{lem.reltildeee} to get that 
\begin{align}\label{ineq2.decayu}
    I_2\leq c\rho^{\beta p}\left[\mathcal{E}(u;B_{1})+\widetilde{\mathcal{E}}(u-v;B_{1/2})\right]
\end{align}
for some constant $c=c(n,s,\Lambda,N,q)$. By \eqref{ineq1.decayu} and \eqref{ineq2.decayu}, we deduce 
\begin{align}\label{ineq21.decayu}
   \mathcal{E}(u;B_{\rho })\leq c\left(\rho^{\beta p} \mathcal{E}(u;B_{1}) +\rho^{sp-n}{\underbrace{\widetilde{\mathcal{E}}(u-v;B_{1/2})}_{\eqqcolon L}}\right),
\end{align}
where $c=c(n,s,\Lambda,N,q)$.

We now estimate the term $L$. To this end, first let us write $w\coloneqq u-v$. First, using the fact that $w\equiv 0$ on $\bbR^n\setminus B_{1/2}$ and Lemma \ref{lem.spi}, we derive
\begin{align*}
    \widetilde{\mathcal{E}}(w;B_{1/2})&\leq \iint_{B_{1}\times B_1}|d_sw|^p\frac{\,dx\,dy}{|x-y|^n}+2\iint_{(\bbR^n\setminus B_{1/2})\times B_1}|w(x)|^p\frac{\,dx\,dy}{|x-y|^{n+sp}}\\
    &\leq \iint_{B_{1}\times B_1}|d_sw|^p\frac{\,dx\,dy}{|x-y|^n}+c\int_{ B_1}|w(x)|^p\,dx\\
    &\leq c\iint_{B_{1}\times B_1}|d_sw|^p\frac{\,dx\,dy}{|x-y|^n}.
\end{align*}
Using this together with a few simple computations, we deduce
\begin{align}\label{ineq02.decayu}
     \widetilde{\mathcal{E}}(w;B_{1/2})\eqsim \widetilde{\mathcal{E}}(w;B_{1})\eqsim \iint_{B_{1}\times B_1}|d_sw|^p\frac{\,dx\,dy}{|x-y|^n}.
\end{align}
Next based on \cite[Lemma 3.8]{MilPegSch21} and \cite[Lemma 5.1]{MazSch18}, we can rewrite the equation \eqref{eq1.dec} as
\begin{equation}\label{eq3.decayu}
    \mathcal{L}{u}^i=\left[\sum_j{\mathbf{\Omega}}^{i,j}\odot d_s{u}^j+{T}^i\right]+{\mu}^i \quad\text{in }B_{1},
\end{equation}
where 
\begin{equation}\label{defn.omegaij}
    {\mathbf{\Omega}}^{i,j}(x,y)\coloneqq |d_su|^{p-2}(d_s{u}^j(x,y)u^i(x)-d_s{u}^i(x,y)u^j(x))a(x,y)
\end{equation}
and
\begin{equation*}
    {T}^i\coloneqq \frac12\int_{\bbR^n}\frac{|{u}(x)-{u}(y)|^p}{|x-y|^{n+sp}}(u^i(x)-u^i(y))a(x,y)\,dy.
\end{equation*}
In addition, we get
\begin{align}\label{eq4.decayu}
    -\divergence_s(\mathbf{\Omega}^{i,j})=u^i\mu^j-u^j\mu^i\quad\text{in }B_{1}.
\end{align}
Now using \eqref{eq1.decayu}, \cite[Lemma 2.1]{DieKimLeeNow24d} and \eqref{ineq02.decayu}, we derive
\begin{equation}\label{estl.decayu}
\begin{aligned}
    L\leq \iint_{\mathcal{C}B_{1/2}}|d_s(u-v)|^{p}|x-y|^{-n}\,dx\,dy&\leq c\iint_{\mathcal{C}B_{1/2}}|d_s(u-v)|^{p}{a(x,y)}|x-y|^{-n}\,dx\,dy\\
    &\leq c<(\mathcal{L}u-\mathcal{L}v),w>
\end{aligned}
\end{equation}
when $p\geq2$ and 
\begin{equation}\label{estll.decayu}
\begin{aligned}
    L&\leq \iint_{\mathcal{C}B_{1/2}}|d_s(u-v)|^{p}|x-y|^{-n}\,dx\,dy\\
    &\leq \iint_{B_{1}\times B_1}|d_s(u-v)|^{p}|x-y|^{-n}\,dx\,dy\\
    &\leq \left(\iint_{B_{1}\times B_1}(|d_su|+|d_sv|)|^{p}|x-y|^{-n}\,dx\,dy\right)^{\frac{2-p}{2}}\\
    &\quad\times\left(\iint_{B_{1}\times B_1}(|d_su|+|d_sv|)|^{p-2}|d_s(u-v)|^2|x-y|^{-n}\,dx\,dy\right)^{\frac{p}{2}}\\
    &\leq c\left(\mathcal{E}(u;B_1)+\widetilde{\mathcal{E}}(w;B_1)\right)^{\frac{2-p}{2}}\left(<(\mathcal{L}u-\mathcal{L}v),w>\right)^{\frac{p}{2}}
\end{aligned}
\end{equation}
when $p\leq 2$, where we write 
\begin{align*}
    <\mathcal{L}u,w>=\iint_{\bbR^{2n}}|d_su|^{p-2}d_su d_sw \frac{a(x,y)}{|x-y|^n}.
\end{align*}Using \eqref{eq3.decayu} together with the fact that $w=u-v\in \widehat{W}^{s,p}_{00}(B_{1/2})$, we obtain
\begin{align*}
    (\mathcal{L}u-\mathcal{L}v)w=\int_{B_{1/2}}\mathbf{\Omega}\odot d_su w+\int_{B_{1/2}}T w+\int_{B_{1/2}}\mu w\eqqcolon \sum_{i=1}^3L_i,
\end{align*}
where we write
\begin{align*}
    \int_{B_{1/2}}\mathbf{\Omega}\odot d_su w=\sum_{i}\int_{B_{1/2}}\left(\sum_{j}\mathbf{\Omega}^{i,j}\odot d_su^j \right)(x)w^i(x)\,dx
\end{align*}
and
\begin{align*}
    \int_{B_{1/2}}Tw=\sum_i\int_{B_{1/2}}T^i(x)w^i(x)\,dx.
\end{align*}
To further estimate \eqref{estl.decayu} and \eqref{estll.decayu}, we estimate each term $L_1,L_2$ and $L_3$.

\textbf{Estimate of $L_1$}.
Before we estimate $L_1$, let us take a cut-off function $\psi\in C_c^\infty(B_{3/4})$ with $\psi\equiv 1$ on $B_{5/8}$. Then we observe 
\begin{equation}\label{estj.decayu}
\begin{aligned}
    J&\coloneqq\int_{B_{1/2}}\int_{\bbR^n}{\mathbf{\Omega}}^{i,j}(x,y) d_su^j(x,y)w^i(x)k(x,y)\,dy\,dx\\
    &=
    \int_{B_{1/2}}\int_{\bbR^n}{\mathbf{\Omega}}^{i,j}(x,y)d_s((u^j-(u^j)_{B_{1}})\psi)(x,y) w^i(x)k(x,y)\,dy\,dx\\
    &\quad-\int_{B_{1/2}}\int_{\bbR^n\setminus B_{5/8}}{\mathbf{\Omega}}^{i,j}(x,y)d_s((u^j-(u^j)_{B_{1}})\psi)(x,y) w^i(x)k(x,y)\,dy\,dx\\
    &\quad +\int_{B_{1/2}}\int_{\bbR^n\setminus B_{5/8}}{\mathbf{\Omega}}^{i,j}(x,y)d_su^j(x,y) w^i(x)k(x,y)\,dy\,dx\coloneqq J_1+J_2+J_3,
\end{aligned}
\end{equation}
where we write $k(x,y)\coloneqq |x-y|^{-n}$.
Let us write $\overline{u}^j\coloneqq (u^j-(u^j)_{B_{1}})\psi$.
We then obtain 
\begin{align*}
    J_1&=\int_{\bbR^n}\int_{\bbR^n}{\mathbf{\Omega}}^{i,j}(x,y)d_s\overline{u}^j(x,y) w^i(x)k(x,y)\,dy\,dx\\
    &=-\int_{{\bbR^n}}\int_{\bbR^n}\mathbf{\Omega}^{i,j}(x,y)\overline{u}^j(y) d_sw^i(x,y)k(x,y)\,dy\,dx\\
    &\quad+\int_{\bbR^n}\int_{\bbR^n}\mathbf{\Omega}^{i,j}(x,y)d_s(\overline{u}^jw^i)k(x,y)\,dy\,dx\\
    &=-\int_{{\bbR^n}}\int_{\bbR^n}\mathbf{\Omega}^{i,j}(x,y)\overline{u}^j(y) d_sw^i(x,y)k(x,y)\,dy\,dx\\
    &\quad+\int_{B_{1/2}}(u^i\mu^{j}-u^j\mu^i)(\overline{u}^jw^i)\,dx\eqqcolon J_{1,1}+J_{1,2},
\end{align*}
where we have used the fact that $d_s(fg)(x,y)=d_sf(x,y)g(x)+d_sg(x,y)f(y)$, \eqref{eq4.decayu} and $\overline{u}^jw^i\equiv0$ on $\bbR^n\setminus B_{1/2}$. We now use Lemma \ref{lem.h1norm2} to get that
\begin{align*}
    |J_{1,1}|&= \left|\int_{{\bbR^n}}(\mathbf{\Omega}^{i,j}\odot d_sw^i)(x)\overline{u}^j(x) \,dx\right|\\
    &\leq c(\mathcal{E}(u;B_{1})^{\frac1{p'}}+\|\mu\|_{L^{p_0}(B_{1})})\widetilde{\mathcal{E}}(w;B_{1})^{\frac1p}\left[\|\overline{u}^j\|_{L^1(\bbR^n)}+[\overline{u}^j]_{BMO(\bbR^n)}\right]
\end{align*}
for some constant $c=c(n,s,\Lambda,N,p_0)$.
Using this, Lemma \ref{lem.locbmo} and the Sobolev Poincar\'e inequality, we further estimate $J_{1,1}$ as
\begin{align}\label{decayu0.j11}
   |J_{1,1}| &\leq c\left({\mathcal{E}}(u;B_{1})^{\frac1{p'}}+\|\mu\|_{L^{p_0}(B_{1})}\right)\widetilde{\mathcal{E}}(w;B_{1})^{1/p}\left([u]_{M^{p,0}(B_1)}+[u]_{BMO(B_1)}\right)
\end{align}
for some constant $c=c(n,s,\Lambda,N,q,p_0)$.
We now employ H\"older's inequality, \eqref{cond.usmall} and \eqref{prop.bmosmall} to see that 
\begin{align*}
    |J_{1,1}| &\leq c\epsilon\left(\mathcal{E}(u;B_{1})+\|\mu\|_{L^{p_0}(B_1)}^{p'}+\widetilde{E}(w;B_1)\right),
\end{align*}
where $c=c(n,s,\Lambda,N,q,p_0)$.
On the other hand, in light of Young's inequality,  Poincar\'e's inequality given in Lemma \ref{lem.spi}, the fact that $|u|\equiv 1$, we estimate $J_{1,2}$ as 
\begin{equation}\label{ineqj12.decayu}
\begin{aligned}
    |J_{1,2}|&\leq \delta\widetilde{\mathcal{E}}(w;B_{1})+c\delta^{-\frac1{p-1}}\left(\int_{B_{1/2}}|\mu|^{p_0}\right)^{\frac{p'}{p_0}}
\end{aligned}
\end{equation}
for some constant $c=c(n,s,\Lambda,N,p_0)$, where the constant $\delta\in(0,1)$ will be determined later. 
Thus combining all the estimates $J_{1,1}$ and $J_{1,2}$ yields
\begin{equation}\label{estj1.decayu}
\begin{aligned}
    J_1&\leq c\left(\epsilon\mathcal{E}(u;B_{1})+(\delta+\epsilon)\widetilde{\mathcal{E}}(w;B_{1})+\delta^{-1}\|\mu\|_{L^{p_0}(B_1)}^{p'}\right),
\end{aligned}
\end{equation}
where $c=c(n,s,\Lambda,N,p_0)$.

We next estimate $J_2$ as 
\begin{align*}
    |J_2|&\leq c\int_{B_{1/2}}\int_{\bbR^n\setminus B_{5/8}}|d_s(u-(u)_{B_1})|^{p-1}|d_s\overline{u}^j||w^i(x)|k(x,y){\,dy\,dx}\\
    &\leq c\int_{B_{1/2}}\int_{\bbR^n\setminus B_{5/8}}|u(x)-(u)_{B_1}|^{p-1}|\overline{u}(x)||w(x)|\frac{\,dy\,dx}{|y|^{n+sp}}\\
    &\quad+c\int_{B_{1/2}}\int_{\bbR^n\setminus B_{5/8}}|u(x)-(u)_{B_1}|^{p-1}|\overline{u}(y)||w(x)|\frac{\,dy\,dx}{|y|^{n+sp}}\\
    &\quad+c\int_{B_{1/2}}\int_{\bbR^n\setminus B_{5/8}}|u(y)-(u)_{B_1}|^{p-1}|\overline{u}(x)||w(x)|\frac{\,dy\,dx}{|y|^{n+sp}}\\
    &\quad+c\int_{B_{1/2}}\int_{\bbR^n\setminus B_{5/8}}|u(y)-(u)_{B_1}|^{p-1}|\overline{u}(y)||w(x)|\frac{\,dy\,dx}{|y|^{n+sp}}\eqqcolon \sum_{i=1}^4J_{2,i}.
\end{align*}
First, we estimate $J_{2,1}$ as 
\begin{align*}
    |J_{2,1}|&\leq c\|u-(u)_{B_1}\|^{p-1}_{L^{p}(B_{1/2)}}\|w\|_{L^{\frac{np}{n-s}}(B_{1/2})}\|\overline{u}\|_{L^{q_1}(B_{1/2})}\\
    &\leq c[u]^{p-1}_{W^{s,p}(B_1)}[w]_{W^{s,p}(B_1)}\|u-(u)_{B_1}\|_{L^{q_1}(B_{1/2})}\\
    &\leq c\mathcal{E}(u;B_1)^{p-1}\widetilde{\mathcal{E}}(w;B_1)[u]_{BMO(B_1)}
\end{align*}
for some constant $c=c(n,s,\Lambda,N,q)$, where we have used H\"older's inequality together with the choice of $q_1$ given in \eqref{const.q1}, Lemma \ref{lem.spi} and \eqref{prop.bmo}. In addition, the constant $q_1$ is determined in \eqref{const.q1}.
Next, we estimate $J_{2,2}$ as 
\begin{align*}
    |J_{2,2}|&\leq c\left[\int_{\bbR^n\setminus B_{5/8}}\frac{|u(y)-(u)_{B_1}|}{|y|^{n+sp}}\,dy\|u-(u)_{B_1}\|^{p-1}_{L^p(B_{1/2})}\|w\|_{L^p(B_{1/2}))}\right]\\
    &\leq c\mathcal{E}(u;B_1)^{\frac1p}\mathcal{E}(u;B_{1})^{\frac1{p'}}\widetilde{\mathcal{E}}(w;B_1)^{\frac1p}\\
    &\leq c\epsilon(\mathcal{E}(u;B_1)+\widetilde{\mathcal{E}}(w;B_1)),
\end{align*}
where we have used Lemma \ref{lem.spi}, \eqref{cond.usmall} and Young's inequality.
Similarly, we estimate $J_{2,3}$ as 
\begin{align*}
    |J_{2,3}|\leq c\mathcal{E}(u;B_1)^{\frac1{p'}}\int_{B_{1/2}}|u-(u)_{B_1}||w|\,dx
    &\leq c\mathcal{E}(u;B_1)^{\frac1{p'}}\|u-(u)_{B_1}\|_{L^{\frac{np}{n-s}}(B_1)}\|w\|_{L^{\frac{np}{np-n+s}}(B_1)}\\
    &\leq c\mathcal{E}(u;B_1)^{\frac1p}\mathcal{E}(u;B_{1})^{\frac1{p'}}\widetilde{\mathcal{E}}(w;B_1)^{\frac1p}\\
    &\leq c\epsilon(\mathcal{E}(u;B_1)+\widetilde{\mathcal{E}}(w;B_1)),
\end{align*}
where we have employed H\"older's inequality and Lemma \ref{lem.spi}. On the other hand, we estimate $J_{2,4}$ as 
\begin{align*}
    |J_{2,4}|&\leq c\int_{\bbR^n\setminus B_{5/8}}\frac{|u(y)-(u)_{B_1}|^p}{|y|^{n+sp}}\,dy\int_{B_{1/2}}|w(x)|\,dx\\
    &\leq c\left[\int_{\bbR^n\setminus B_{1}}\frac{|u(y)-(u)_{B_1}|^q}{|y|^{n+sp}}\,dy+\int_{B_1}|u(y)-(u)_{B_1}|^p\,dy\right]\int_{B_{1/2}}|w(x)|\,dx,
\end{align*}
where we have used the fact that $|u|\equiv 1$ on $\bbR^n$.
We now use Lemma \ref{lem.spi}, \eqref{cond.usmall} and Young's inequality to obtain
\begin{align*}
    |J_{2,4}|\leq c(\mathcal{E}(u;B_1)^{\frac{q}p}+\mathcal{E}(u;B_1)){\mathcal{E}}(w;B_1)^{\frac1p}&\leq c\epsilon^{{q-(p-1)}}\mathcal{E}(u;B_1)^{\frac1{p'}}\widetilde{\mathcal{E}}(w;B_1)^{\frac1p}\\
    &\leq c\epsilon^{q-p+1}(\mathcal{E}(u;B_1)+\widetilde{\mathcal{E}}(w;B_1))
\end{align*}
for some constant $c=c(n,s,\Lambda,N,q)$.
Thus, combining all the estimate $J_{2,i}$ for each $i=1,2,3,4$, we get 
\begin{align}\label{estj2.decayu}
    |J_2|\leq c\epsilon^{q-p+1}(\mathcal{E}(u;B_1)+\widetilde{\mathcal{E}}(w;B_1))
\end{align}
for some constant $c=c(n,s,\Lambda,N)$.
By following the same lines as in the estimate of $J_2$ with $\overline{{u}}^j$ replaced by $u^j$, we have
\begin{align*}
    |J_3|\leq c\epsilon^{q-p+1}(\mathcal{E}(u;B_1)+\widetilde{\mathcal{E}}(w;B_1))
\end{align*}
for some constant $c=c(n,s,\Lambda,N)$. Using this together with \eqref{estj1.decayu} and \eqref{estj2.decayu}, we obtain
\begin{align*}
    |J|&\leq c\left[\epsilon^{q-p+1}\mathcal{E}(u;B_1)+(\epsilon^{q-p+1}+\delta)\widetilde{\mathcal{E}}(w;B_{1})+\delta^{-\frac1{p-1}}\left(\int_{B_{1}}|\mu|^{p_0}\right)^{\frac{p'}{p_0}}\right],
\end{align*}
where $c=c(n,s,\Lambda,N)$.
Therefore, we derive 
\begin{equation}\label{estl1.decayu}
\begin{aligned}
   |L_1|\leq c|J|
  \leq c\left[\epsilon^{q-p+1}\mathcal{E}(u;B_1)+(\epsilon^{q-p+1}+\delta)\widetilde{\mathcal{E}}(w;B_{1})+\delta^{-\frac1{p-1}}\left(\int_{B_{1}}|\mu|^{p_0}\right)^{\frac{p'}{p_0}}\right]
\end{aligned}
\end{equation}
where $c=c(n,s,\Lambda,q,p_0)$.

\textbf{Estimate of $L_2$}. First, we note that
\begin{align*}
    L_2=\sum_i\frac14\int_{\bbR^n}\int_{\bbR^n}|d_su|^p(u^i(x)-u^i(y))(w^i(x)-w^i(y))a(x,y)k(x,y)\,dx\,dy.
\end{align*}
Thus, using the fact that $w\equiv 0$ on $\bbR^n\setminus B_{1/2}$, We now estimate $L_2$ as 
\begin{align*}
    |L_2|&\leq c\int_{B_{5/8}}\int_{B_{5/8}}|d_su|^p|u(x)-u(y)||w(x)-w(y)|k(x,y)\,dy\,dx\\
    &\quad+c\int_{B_{1/2}}\int_{\bbR^n\setminus B_{5/8}}|d_su|^p|u(x)-u(y)||w(x)|k(x,y)\,dy\,dx\eqqcolon L_{2,1}+L_{2,2}
\end{align*}
for some constant $c=c(n,s,\Lambda,N)$.
By H\"older's inequality and \eqref{ineq02.decayu}, we obtain 
\begin{equation}\label{ineq22.decayu}
\begin{aligned}
    L_{2,1}\leq c[u]^{p+1}_{W^{\frac{s(p-1)}{p+1},\frac{p(p+1)}{p-1}}(B_{5/8})}[w]_{W^{s,p}(B_{5/8})}
    &\leq c[u]^{p+1}_{W^{\frac{s(p-1)}{p+1},\frac{p(p+1)}{p-1}}(B_{5/8})}\widetilde{\mathcal{E}}(w;B_1)^{\frac1p}.
\end{aligned}
\end{equation}
We now use Lemma \ref{lem.gagnir}  to see that
\begin{align}\label{ineq3.decayu}
    [u]^{p+1}_{W^{\frac{s(p-1)}{p+1},\frac{p(p+1)}{p-1}}(B_{5/8})}\leq c[u]^{2}_{BMO(B_1)}[u]^{p-1}_{W^{s,p}(B_{1})}\leq c\epsilon^2\mathcal{E}(u;B_1)^{\frac1{p'}},
\end{align}
where $c=c(n,s,N)$. Next, we plug \eqref{ineq3.decayu} into \eqref{ineq22.decayu} and then use Young's inequality to obtain
\begin{align*}
    L_{2,1}\leq c\epsilon (\mathcal{E}(u;B_1)+\widetilde{\mathcal{E}}(w;B_1)).
\end{align*}
As in the estimate of $J_2$ with $\widetilde{u}^j$ replaced by $u$, we get
\begin{align}\label{l22.decayu}
    L_{2,2}\leq c\left[\epsilon^{q-p+1}\mathcal{E}(u;B_1)+(\epsilon^{q-p+1}+\delta)\widetilde{\mathcal{E}}(w;B_{1})+\delta^{-\frac1{p-1}}\left(\int_{B_{1}}|\mu|^{p_0}\right)^{\frac{p'}{p_0}}\right]
\end{align}
Thus, by combining the estimates $L_{2,1}$ and $L_{2,2}$ and applying Cauchy's inequality, we obtain
\begin{align}\label{estl2.decayu}
    |L_2|\leq c\left[\epsilon^{q-p+1}\mathcal{E}(u;B_1)+(\epsilon^{q-p+1}+\delta)\widetilde{\mathcal{E}}(w;B_{1})+\delta^{-\frac1{p-1}}\left(\int_{B_{1}}|\mu|^{p_0}\right)^{\frac{p'}{p_0}}\right]
\end{align}
for some constant $c=c(n,s,\Lambda,N,q)$.

\textbf{Estimate of $L_3$}. By H\"older's inequality, Lemma \ref{lem.spi} and Young's inequality, we estimate $L_3$ as 
\begin{align}\label{estl3.decayu}
    |L_3|\leq c\delta^{-\frac1{p-1}}\|\mu\|^{p'}_{L^{p_0}(B_{1/2})}+c\delta\widetilde{\mathcal{E}}(w;B_{1}).
\end{align}
In light of the estimate of $L_3$, \eqref{estl1.decayu} and \eqref{estl2.decayu}, we get 
\begin{align}\label{ineqlast.decayu}
    \left|(\mathcal{L}u-\mathcal{L}v)w\right|\leq c\left[\epsilon^{q-p+1}\mathcal{E}(u;B_1)+(\epsilon^{q-p+1}+\delta)\widetilde{\mathcal{E}}(w;B_{1})+\delta^{-\frac1{p-1}}\left(\int_{B_{1}}|\mu|^{p_0}\right)^{\frac{p'}{p_0}}\right].
\end{align}
We are now ready to estimate $L$ given in \eqref{ineq21.decayu}. To this end, we split the proof into two cases.
\begin{itemize}
    \item $p\geq2$. From \eqref{estl.decayu}, \eqref{ineqlast.decayu} and \eqref{ineq02.decayu}, we derive 
\begin{align*}
    L\leq c\left[\epsilon^{q-p+1}\mathcal{E}(u;B_1)+(\epsilon^{q-p+1}+\delta)\widetilde{\mathcal{E}}(w;B_{1/2})+\delta^{-\frac1{p-1}}\left(\int_{B_{1}}|\mu|^{p_0}\right)^{\frac{p'}{p_0}}\right]
\end{align*}
for some constant $c=c(n,s,\Lambda,N,q,p_0)$.
Thus taking $\delta$ and $\epsilon_0$ sufficiently small depending only on $n,s,\Lambda,N,q$ and $p_0 $, we get
\begin{align}\label{ineqlast2.decayu}
     L\leq c\left[\epsilon^{q-p+1}\mathcal{E}(u;B_1)+\left(\int_{B_{1}}|\mu|^{p_0}\right)^{\frac{p'}{p_0}}\right],
\end{align}
where we have also used \eqref{ineq02.decayu}.
\item $p\leq 2$. After a few simple computations together with Young's inequality and \eqref{ineq02.decayu}, we estimate $L$ given in \eqref{estll.decayu} as
\begin{align*}
    L&\leq c\left(\mathcal{E}(u;B_1)+\widetilde{\mathcal{E}}(w;B_1))\right)^{\frac{2-p}{2}}\\
   &\quad\times \left[\epsilon^{q-p+1}\mathcal{E}(u;B_1)+(\epsilon^{q-p+1}+\delta)\widetilde{\mathcal{E}}(w;B_{1})+\delta^{-\frac1{p-1}}\left(\int_{B_{1}}|\mu|^{p_0}\right)^{\frac{p'}{p_0}}\right]^{\frac{p}{2}}\\
   &\leq c\left[\frac{\epsilon^{q-p+1}}{\delta^{\frac{2-p}{p}}}\mathcal{E}(u;B_1)+(\epsilon^{q-p+1}+\delta)\widetilde{\mathcal{E}}(w;B_{1/2})+\delta^{-\frac1{p-1}-\frac{2-p}{p}}\left(\int_{B_{1}}|\mu|^{p_0}\right)^{\frac{p'}{p_0}}\right].
\end{align*}
Thus taking $\delta$ and $\epsilon_0$ sufficiently small depending only on $n,s,\Lambda,N,q$ and $p_0 $, we get \eqref{ineqlast2.decayu} with $p\leq 2$.
\end{itemize}
Plugging \eqref{ineqlast2.decayu} into \eqref{ineq21.decayu} yields the desired estimate. This completes the proof. 
\end{proof}

 Now we recall the fractional maximal function of order $\beta$ for $|f|$ at $z$ given by 
\begin{equation} \label{maxfun}
    M_{\beta,R}(|f|)(z)\coloneqq \sup_{0<r\leq R}r^\beta\dashint_{B_r(z)}|f|\,dx.
\end{equation}
See \cite[Definition 2.2]{DieNow23} for more details. Using this and Lemma \ref{lem.decayu}, we now obtain the following oscillation type estimates.
\begin{lemma}\label{lem.decayu2}
Let $p=\frac{n}{s}$ and $p_0\in\left(1,1+\frac{s}{2(n-s)}\right]$. Fix $\vartheta< \min\left\{\alpha,\frac{sp}{p-1}\right\}$ with $\vartheta q<sp$, where the constants $q$ and $\beta$ are determined in \eqref{cond.q} and \eqref{lem.fracdecay}, respectively.
    Let $u\in \widehat{W}^{s,p}(B_{2R}(x_0);\bbR^N)$ with $|u|\equiv 1$ on $\bbR^n$ be a weak solution to 
    \begin{equation}\label{goal.decayu2}
        \mathcal{L}u=|\mathbf{d}_{s,a}u|^pu+\mu\quad\text{in }B_{R}(x_0).
    \end{equation}
    Then there is a sufficiently small $\epsilon=\epsilon(n,s,\Lambda,q,\vartheta,p_0)$ such that if 
    \begin{equation}\label{decayu2.small}
        [u]_{M^{p,0}(B_{2R}(x_0))}\leq \epsilon,
    \end{equation}
    then
    \begin{align*}
        \sup_{0<r\leq R}r^{-\vartheta}\dashint_{B_r(x_0)}|u-(u)_{B_{r}(x_0)}|\,dx\leq c\left(\epsilon/R^{\vartheta}+M_{p_0(sp-\vartheta(p-1)),R}(|\mu|^{p_0})(x_0)^{\frac{1}{p_0(p-1)}}\right)
    \end{align*}
    for some constant $c=c(n,s,\Lambda,N,q,\vartheta,p_0)$.
\end{lemma}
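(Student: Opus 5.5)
The plan is to iterate the one-step decay estimate of Lemma \ref{lem.decayu} along the geometric sequence of radii $r_k=\tau^k R$, with $\tau\in(0,2^{-10}]$ fixed small, and then to sum the resulting recursion. Scale first to $x_0=0$. The smallness hypothesis \eqref{decayu2.small} is a supremum over all balls $B_\rho(z)\subset B_{2R}$, so it holds at every scale $r_k$; hence Lemma \ref{lem.decayu} can be applied on each $B_{r_k}$ in place of $B_R$. Taking $\rho=\tau$ there gives
\begin{align*}
\mathcal{E}(u;B_{r_{k+1}})\leq c\Big(\tau^{p\beta}+\tfrac{\epsilon^{q-p+1}}{\tau^{n-sp}}\Big)\mathcal{E}(u;B_{r_k})+c\,\tau^{sp-n}\Big(\dashint_{B_{r_k}}|r_k^{sp}\mu|^{p_0}\Big)^{\frac{p}{p_0(p-1)}}.
\end{align*}
Since $p=n/s$, the factor $\tau^{sp-n}$ equals $1$. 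Choose $\beta\le\alpha$ with $\vartheta<\beta$ and $\beta q<sp$; this is possible because $\vartheta<\alpha$ and $\vartheta q<sp$. Then choose $\tau$ so small that $c\tau^{p\beta}\le\frac12\tau^{p\vartheta}$, and afterwards $\epsilon$ so small that $c\epsilon^{q-p+1}\le\frac12\tau^{p\vartheta}$. The recursion becomes $\mathcal{E}_{k+1}\le\tau^{p\vartheta}\mathcal{E}_k+c\,m_k$, where $m_k\coloneqq\big(r_k^{sp p_0}\dashint_{B_{r_k}}|\mu|^{p_0}\big)^{\frac{p}{p_0(p-1)}}$.

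Iterating gives $\mathcal{E}_k\le\tau^{kp\vartheta}\mathcal{E}_0+c\sum_{j<k}\tau^{(k-1-j)p\vartheta}m_j$. Rewrite each $m_j$ with the fractional maximal function at level $\gamma_0\coloneqq p_0(sp-\vartheta(p-1))$:
\begin{align*}
m_j=\Big(r_j^{\gamma_0}\dashint_{B_{r_j}}|\mu|^{p_0}\Big)^{\frac{p}{p_0(p-1)}}r_j^{p\vartheta}\le M_{\gamma_0,R}(|\mu|^{p_0})(0)^{\frac{p}{p_0(p-1)}}\,r_j^{p\vartheta}.
\end{align*}
This uses the identity $p_0\,sp=\gamma_0+p_0\vartheta(p-1)$. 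Inserting this bound, the sum becomes $\sum_j\tau^{(k-1-j)p\vartheta}r_j^{p\vartheta}\approx k\,r_k^{p\vartheta}$, which carries an extra factor $k$. To remove it, I will run the iteration with a slightly larger exponent $\vartheta'\in(\vartheta,\beta)$ in the contraction, so that the sum is geometric with ratio $\tau^{p(\vartheta'-\vartheta)}<1$ and is bounded by $c\,r_k^{p\vartheta}$. This gives
\begin{align*}
\mathcal{E}(u;B_{r_k})\le c\,r_k^{p\vartheta}\Big(R^{-p\vartheta}\mathcal{E}(u;B_R)+M_{\gamma_0,R}(|\mu|^{p_0})(0)^{\frac{p}{p_0(p-1)}}\Big).
\end{align*}
Bound $\mathcal{E}(u;B_R)\le[u]^p_{M^{p,0}(B_{2R})}\le\epsilon^p$, which is at most $\epsilon$ up to a constant.

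To conclude, Lemma \ref{lem.spi} together with H\"older's inequality gives $\dashint_{B_r}|u-(u)_{B_r}|\le c\,\mathcal{E}(u;B_r)^{1/p}$. For a general $r\in(0,R]$, pick $k$ with $r_{k+1}<r\le r_k$ and compare the averages on $B_r$ and $B_{r_k}$, losing only a factor $\tau^{-n}$. Taking $p$-th roots then yields the claimed bound with the term $M^{1/(p_0(p-1))}$.

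The step I expect to be the main obstacle is the bookkeeping of the $\mu$-term. One must check that Lemma \ref{lem.decayu}, applied at scale $r_k$, produces exactly the normalized quantity $(r_k^{sp}\cdots)^{p/(p_0(p-1))}$, and that the power matches the exponent $\gamma_0$ of the maximal function. The restriction $\vartheta<sp/(p-1)$ is what makes $\gamma_0>0$, which is needed for the maximal function to be meaningful. A second point to handle is the extra margin $\vartheta<\vartheta'<\beta$ needed to avoid the logarithmic factor $k$.
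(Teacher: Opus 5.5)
Your proposal is correct and follows essentially the paper's route: fix $\beta\in(\vartheta,\alpha]$, choose the ratio $\tau$ (the paper's $\rho_0$) and then $\epsilon$ so that Lemma \ref{lem.decayu} becomes a strict contraction for $\mathcal{E}(u;B_r)/r^{p\vartheta}$, and convert the $\mu$-term into $M_{p_0(sp-\vartheta(p-1)),R}(|\mu|^{p_0})(x_0)^{p'/p_0}$ via the same exponent identity. The only difference is cosmetic. The paper absorbs through $N_{r_0}=\sup_{r_0\le r\le R}\mathcal{E}(u;B_r)/r^{p\vartheta}$ with contraction factor $\frac14$ and then lets $r_0\to0$. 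You instead iterate along $r_k=\tau^kR$, with the margin $\vartheta'\in(\vartheta,\beta)$ playing the role of that $\frac14$; your choice of $\beta$ close to $\vartheta$ also secures $\beta q<sp$ more cleanly than the paper's midpoint choice.
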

\begin{proof}
We now choose $\beta=\frac12\left(\min\left\{\alpha,\frac{sp}{p-1}\right\}+\vartheta\right)>\vartheta$ to see that $\beta\leq\alpha$ with $\beta q<sp$. Then there is a small integer $\rho_0=\rho_0(n,s,\Lambda,N,q,\vartheta,p_0)$ such that 
    \begin{align}\label{ineq.rho0}
        c\rho_0^{p(\beta-\vartheta)}\leq 1/128,
    \end{align}
    where the constant $c=c(n,s,\Lambda,N,q,p_0)$ is determined in Lemma \ref{lem.decayu}. We next choose $\epsilon=\epsilon(n,s,\Lambda,N,q,\vartheta,p_0)$ so that 
    \begin{align}\label{ineq.epsilon}
        c\mathcal{A}(\rho_0,\epsilon)\leq 1/64,
    \end{align}
    where $\mathcal{A}(\rho,\epsilon)$ is determined in Lemma \ref{lem.decayu}.
    Therefore, if 
    \begin{equation}\label{decayu2.small33}
        [u]_{M^{p,0}(B_{2R}(x_0))}\leq \epsilon,
    \end{equation}
    then by Lemma \ref{lem.decayu}, we obtain
\begin{align}\label{ineq1.decayu2}
        \frac{\mathcal{E}(u;B_{\rho_0 r}(x_0)))}{(\rho_0r)^{p\vartheta}}\leq \frac14 \frac{\mathcal{E}(u;B_{ r}(x_0)))}{r^{\vartheta}}+cM_{p_0(sp-\vartheta(p-1)),r}(|\mu|^{p_0})(x_0)^{\frac{p'}{p_0}}
    \end{align}
    for any $r\leq R$. Let us fix $r_0<R$ and define
    \begin{align*}
        N_{r_0}(x_0)\coloneqq \sup_{r_0\leq r\leq R}\frac{\mathcal{E}(u;B_{r}(x_0)))}{r^{p\vartheta}}.
    \end{align*}
    Using \eqref{ineq1.decayu2}, we deduce
    \begin{align*}
        N_{r_0}(x_0)\leq \frac{1}{4}N_{r_0}(x_0)+c\left(\mathcal{E}(u;B_R(x_0))/R^{p\vartheta}+M_{p_0(sp-\vartheta(p-1)),R}(|\mu|^{p_0})(z)^{\frac{p'}{p_0}}\right)
    \end{align*}
    for some constant $c=c(n,s,\Lambda,N,q,p_0)$. By taking $r_0\to 0$ and using the condition \eqref{decayu2.small33}, we have 
    \begin{align*}
        \sup_{0< r\leq R}\frac{\mathcal{E}(u;B_{r}(x_0)))}{r^{p\vartheta}}\leq c\left(\epsilon^p/R^{p\vartheta}+M_{p_0(sp-\vartheta(p-1)),R}(|\mu|^{p_0})(z)^{\frac{p'}{p_0}}\right)
    \end{align*}
    for some constant $c=c(n,s,\Lambda,N,q,p_0)$. Using this and Lemma \ref{lem.spi}, we derive the desired result.
\end{proof}

We are now ready to prove Theorem \ref{pot_intro} and Theorem \ref{thm0}.
\begin{proof}[Proof of Theorem \ref{pot_intro} and Theorem \ref{thm0}.] Let us fix $\delta=\min\left\{\alpha,\frac{sp}{p-1}\right\}$, where the constant $\alpha$ is determined in Lemma \ref{lem.fracdecay}. Given $\gamma<\frac{n}{sp-(p-1)\delta}$, we write $\vartheta\coloneqq \frac{1}{p-1}\left(sp-\frac{n}{\gamma}\right)<\delta$. We now choose 
\begin{equation}\label{q0.defn}
    q_0=\min\left\{\frac{(p-1)+sp/\vartheta}{2},\frac{2p-1}{2},\frac{p+1}{2}\right\}
\end{equation}
to see that $\vartheta  q_0< sp$.
Next, we select the constant $p_0=\min\left\{\frac{\gamma+1}{2},1+\frac{s}{2(n-s)}\right\}$ to see that $\mu\in L^{p_0}(B_{2R}(x_0);\bbR^N)$. 

We now want to show that there is a sufficiently small $R_0\leq R$ such that \eqref{decayu2.small} holds with $R=R_0$. To this end, note that there is sufficiently small $\rho=\rho(\delta,N)\leq R/16$ such that
    \begin{align*}
       \int_{B_{4\rho}(x_0)}\int_{B_{4\rho}(x_0)}|d_su|^p\frac{\,dx\,dy}{|x-y|^n}\leq \delta.
    \end{align*}
    We next prove that for any positive integer $l\geq1$ and $B_{r}(z)\subset B_{2^{-l}\rho}(x_0)$,
    \begin{align*}
        \mathrm{Tail}_{q_0}(u-(u)_{B_r}(z);B_{r}(z))\leq c(\delta+2^{-spl}),
    \end{align*}
    where $q_0=\min\{(p+1)/2,(2p-1)/2\}$ and $c=c(N,s)$.
   To do this, first we observe $B_{2^lr}(z)\subset B_{2\rho}(x_0)$ and
   \begin{align*}
        \int_{B_{2^{i}r}(z)}\int_{B_{2^{i}r}(z)}|d_su|^p\frac{\,dx\,dy}{|x-y|^n}\leq \int_{B_{2\rho}(x_0)}\int_{B_{2\rho}(x_0)}|d_su|^p\frac{\,dx\,dy}{|x-y|^n}\leq \delta
   \end{align*}
   for any $i\geq0$. Thus, using H\"older's inequality and Lemma \ref{lem.spi}, we derive 
   \begin{align*}
       \mathrm{Tail}_{q_0}(u-(u)_{B_{r}(z)};B_{r}(z))^q
       &\leq c\sum_{i=0}^{l}2^{-spi}\dashint_{B_{2^{i}r}(z)}|u-(u)_{B_{2^{i}r}(z)}|^{q_0}\,dx\\
        &\quad+c2^{-spl}\mathrm{Tail}_{q_0}(u-(u)_{B_{2^l}(z)};B_{2^lr}(z))^{q_0}\\
         &\leq c\sum_{i=0}^{k}2^{-2si}\left((2^ir)^{sp}\dashint_{B_{2^{i}r}}\int_{B_{2^{i}r}}|d_su|^p\frac{\,dx\,dy}{|x-y|^n}\right)^{\frac{p}{q_0}}\\
        &\quad+c2^{-spl}\left(\rho^{sp}\dashint_{B_{4\rho}(x_0)}\int_{B_{4\rho}(x_0)}|d_su|^p\frac{\,dx\,dy}{|x-y|^n}\right)^{\frac{p}{{q_0}}}\\
        &\quad+c2^{-spl}\left(\mathrm{Tail}_{q_0}(u-(u)_{B_{4\rho}(x_0)};B_{4\rho}(x_0))\right)\\
        &\leq c(\delta+2^{-spl}),
   \end{align*}
   where we have used $\|u\|\equiv 1$ and 
   \begin{align*}
       \mathrm{Tail}_{q_0}(u-(u)_{B_{4\rho}(x_0)};B_{4\rho}(x_0))\leq c
   \end{align*}
   for some constant $c=c(s,N)$.
   Therefore, by taking $\rho$ sufficiently small and taking $l$ sufficiently large, we have 
    \begin{align}\label{ineq.nspii}
[u]_{M^{p,0}(B_{R_0}(x_0))}&\leq \sup_{B_r(z)\subset B_{R_0}(x_0)}[u]^p_{W^{s,p}(B_{r}(z))}+\mathrm{Tail}_{q_0}(u-(u)_{B_{r}(z)};B_r(z))^p
\leq c(\delta+2^{-spl})^{\frac1p}\leq \epsilon,
    \end{align}
where $R_0=2^{-l}\rho$.

By Lemma \ref{lem.decayu2} with $q$ replaced by $q_0$ given in \eqref{q0.defn}, we have 
\begin{align*}
    \sup_{0<r\leq R_0}r^{-\vartheta}\dashint_{B_r(x_0)}|u-(u)_{B_{r}(x_0)}|\,dx\leq c\left(\epsilon/R_0^{\vartheta}+M_{p_0(sp-\vartheta(p-1)),R_0}(|\mu|^{p_0})(x_0)^{\frac{1}{p_0(p-1)}}\right).
\end{align*}
In particular, this proves Theorem \ref{pot_intro}.

By a simple modification of the proof of \cite[Proposition 2.5]{DieNow23}, we have 
\begin{align*}
    \sup_{0<r\leq R_0}r^{-\vartheta}\dashint_{B_r(x_0)}|u-(u)_{B_{r}(x_0)}|\,dx\leq c\left(\epsilon/R_0^{\vartheta}+\|\mu\|_{L^{\gamma,\infty}(B_{2R_0}(x_0))}\right).
\end{align*}
We now apply classical Campanato's theory as in \cite[Theorem 2.9]{Giu03} and a standard covering argument to arrive at $u\in C^{\vartheta}(B_R(x_0))$. This completes the proof of Theorem \ref{thm0}.
\end{proof}

%%%%%%%%%%%%%%%%%%%%%%%%%nonlinear/linear case

\section{Regularity for $p=2$}
In this section, we prove several improved regularity results for linear nonlocal harmonic map systems under suitable smallness assumptions.

First, we observe the following higher-order decay estimates for linear nonlocal systems. 
\begin{lemma}\label{lem.potsgra}
    Let $p=2$ and let $v\in \widehat{H}^{s}(B_1;\bbR^N)$ be a weak solution to 
    \begin{equation*}
        \mathcal{L}v=0\quad\text{in }B_1
    \end{equation*}
    with $a(x,y)=a(x-y)$. Let us fix $\alpha\in(s,\min\{2s,1\})$ such that $\alpha q<2s$, where $q$ is determined in \eqref{cond.q} with $p=2$.
    Then, for any $\tau\in(0,1/4]$,
    \begin{align}\label{res2.potsgra}
       \mathcal{E}(v;B_{\tau})\leq c\tau^{2\alpha}\mathcal{E}(v;B_{1/2})
    \end{align}
    where the constant $c$ depends only on $n,s,\Lambda,N,q$ and $\alpha$.
\end{lemma}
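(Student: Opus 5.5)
The plan follows the proof of Lemma \ref{lem.fracdecay}, with the H\"older estimate replaced by a $C^{\alpha}$ estimate for some $\alpha>s$. This higher exponent is available because the kernel is translation invariant and the system is linear ($p=2$).

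\textbf{Step 1 (oscillation bound with exponent $\alpha$).} We first establish
\begin{equation*}
[v]_{C^{\alpha}(B_{1/4})}\leq c\,E_2(v;B_{3/8}),\qquad E_2(v;B_r)\coloneqq\Big(\dashint_{B_r}|v-(v)_{B_r}|^2\Big)^{\frac12}+\mathrm{Tail}_{1}(v-(v)_{B_r};B_r).
\end{equation*}
Since the system is linear and the coefficient $a(x-y)$ does not depend on the component, each component $v^i$ solves the scalar equation $\mathcal{L}v^i=0$. We can therefore use scalar theory for translation invariant kernels. Difference quotients $\delta_h v/|h|^{\gamma}$ are again solutions, because the operator commutes with translations. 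Combining this with the $C^{\gamma_0}$ estimate and the covering and bootstrap scheme of \cite[Lemmas 4.5, 4.6]{DieKimLeeNow24}, used earlier in the proof of Lemma \ref{lem.high.diff.l1}, we iterate the H\"older exponent up to any $\alpha<\min\{2s,1\}$. As in Lemma \ref{lem.fracdecay}, the tail term is handled by cutting off $v\xi$ and invoking \cite[Lemma 2.6]{DieKimLeeNow24d}. Because $v-(v)_{B_{1/4}}$ is also a solution, the estimate is stated for the oscillation.

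\textbf{Step 2 (local part of $\mathcal{E}$).} For $\tau\le 1/4$ the local part is estimated as
\begin{equation*}
\tau^{2s}\dashint_{B_\tau}\int_{B_\tau}\frac{|v(x)-v(y)|^2}{|x-y|^{n+2s}}\,dx\,dy\le c[v]^2_{C^{\alpha}}\,\tau^{2s}\dashint_{B_\tau}\int_{B_\tau}|x-y|^{2\alpha-2s-n}\,dy\,dx\le c\,\tau^{2\alpha}[v]^2_{C^\alpha}.
\end{equation*}
The integral converges since $\alpha>s$.

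\textbf{Step 3 (tail part).} We dyadically decompose $\mathrm{Tail}_q(v-(v)_{B_\tau};B_\tau)^q$ exactly as in the estimate of $I$ in Lemma \ref{lem.fracdecay}, with $1/8<2^j\tau\le 1/4$. This gives the sum $\sum_{k\le j}2^{(-2s+\alpha q)k}\tau^{\alpha q}[v]_{C^\alpha}^q$ plus $\tau^{2s}$ times the tail at scale $3/8$. The series is geometric because $\alpha q<2s$, and $\tau^{2s}\le\tau^{2\alpha}$ because $\alpha\leq 1$ is not needed there; only $\tau\leq 1$ and $2s\geq 2\alpha\cdot\frac{q}{2}\cdot\frac{2}{q}$ are used, i.e.\ the bound follows from $\alpha q<2s$ together with $q>1$. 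This yields the bound $c\tau^{2\alpha}(E_2(v;B_{3/8})^2+\mathrm{Tail}_q(\cdot;B_{3/8})^2)$. Finally, \eqref{ineq4.fracdecay}, which relies on Lemma \ref{lem.spi} and Lemma \ref{lem.reltildeee}, bounds this by $c\,\mathcal{E}(v;B_{1/2})$, and that gives \eqref{res2.potsgra}.

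The main obstacle is Step 1. A generic $C^{\alpha_0}$ estimate with small $\alpha_0$ is not sufficient here; we need $\alpha>s$, and this genuinely uses translation invariance. The delicate part is controlling the tails of the difference quotients in terms of only $\mathrm{Tail}_1$ of $v$. For this I would apply the scalar result for translation invariant kernels from \cite{DieKimLeeNow24} (see also \cite{DieNow23}) componentwise.
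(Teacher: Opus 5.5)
Your proposal is correct and follows the paper's proof: a componentwise $C^{\alpha}$ estimate with $\alpha>s$ coming from translation invariance, followed by the computation of Lemma \ref{lem.fracdecay}. The paper simply quotes \cite[Theorem 1.7]{Now21}, with $\mathrm{Tail}_1$ on the right-hand side, rather than re-deriving the estimate via difference quotients. In the second part, $\alpha>s$ makes the local integral converge and $\alpha q<2s$ makes the dyadic tail sum geometric.

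One slip in Step 3: $\tau^{2s}\le\tau^{2\alpha}$ is false, since $\alpha>s$. What you actually need is that after raising $\mathrm{Tail}_q^q$ to the power $2/q$, the outer tail contributes $\tau^{4s/q}$, and $\tau^{4s/q}\le\tau^{2\alpha}$ holds precisely because $\alpha q\le 2s$.
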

\begin{proof}
Since $v^i-(v^i)_{B_{3/8}}\in \widehat{H}^s(B_1)$ is a weak solution to
\begin{align*}
    \mathcal{L}(v^i-(v^i)_{B_{3/8}})=0\quad\text{in }B_1,
\end{align*}
by {\cite[Theorem 1.7]{Now21}}, we have
\begin{align}\label{ineq2.potsgra}
    \|v^i\|_{C^{\alpha}(B_{1/4})}\leq  c\left(\dashint_{B_{3/8}}|v^i-(v^i)_{B_{3/8}}|\,dx+\mathrm{Tail}_1(v^i-(v^i)_{B_{3/8}};B_{3/8})\right)
\end{align}
for some constant $c=c(n,s,\Lambda,\alpha,q)$.
We now follow the same lines as in the proof of Lemma \ref{lem.fracdecay} together with the fact that $\alpha q<2s$, in order to deduce the desired estimate.
\end{proof}
Using Lemma \ref{lem.potsgra} and Remark \ref{rmk.h1norm}, we obtain a corresponding result to Lemma \ref{lem.decayu2}.
\begin{lemma}\label{lem.decayu3}
Let $p=2$, $n>2s$ and $p_0=\min\left\{\frac{n}{2s},\frac{2n}{n+s}\right\}$. Fix $\alpha\in(s,\min\{2s,1\})$ with $\alpha q<2s$, where the constant $q$ is determined in \eqref{cond.q}.
    Let $u\in \widehat{W}^{s,2}(B_{2R}(x_0);\bbR^N)$ with $|u|\equiv 1$ on $\bbR^n$ be a weak solution to 
    \begin{equation}\label{goal.decayu2}
        \mathcal{L}u=|\mathbf{d}_{s,a}u|^2u+\mu\quad\text{in }B_{R}(x_0).
    \end{equation}
    There is a sufficiently small $\epsilon=\epsilon(n,s,\Lambda,q,\alpha,p_0)$ such that if 
    \begin{equation*}
        [u]_{M^{2,0}(B_{2R}(x_0))}\leq \epsilon,
    \end{equation*}
    then
    \begin{align}\label{goal.decayu3}
        \sup_{0<r\leq R}r^{-\alpha}\dashint_{B_r(x_0)}|u-(u)_{B_{r}(x_0)}|\,dx\leq c\left(\epsilon/R^{\alpha}+M_{p_0(2s-\alpha),R}(|\mu|^{p_0})(x_0)^{\frac{1}{p_0(p-1)}}\right)
    \end{align}
    for some constant $c=c(n,s,\Lambda,N,\alpha,p_0)$.
\end{lemma}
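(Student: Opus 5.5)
The plan is to repeat the two-step scheme of Lemma \ref{lem.decayu} and Lemma \ref{lem.decayu2} in the linear setting $p=2$, $n>2s$. We replace the decay of Lemma \ref{lem.fracdecay} by the higher-order decay of Lemma \ref{lem.potsgra}, and the div-curl estimate of Lemma \ref{lem.h1norm} by its $p=2$ version from Remark \ref{rmk.h1norm} with $p_0=\min\{\frac{n}{2s},\frac{2n}{n+s}\}$.

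\textbf{Comparison step.} After scaling, assume $B_R(x_0)=B_1$ and $\rho\le 2^{-10}$. Let $v$ solve $\mathcal{L}v=0$ in $B_{1/2}$ with $v=u$ outside $B_{1/2}$, and set $w=u-v$. Exactly as in \eqref{ineq21.decayu}, but using Lemma \ref{lem.potsgra} for $v$, we get
\begin{equation*}
\mathcal{E}(u;B_\rho)\le c\left(\rho^{2\alpha}\mathcal{E}(u;B_1)+\rho^{2s-n}\widetilde{\mathcal{E}}(w;B_{1/2})\right).
\end{equation*}
Since $p=2$, the energy of $w$ is controlled directly by $\langle \mathcal{L}u-\mathcal{L}v,w\rangle$, with no case distinction. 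We rewrite the equation as \eqref{eq3.decayu}, with $\mathbf{\Omega}^{i,j}$ satisfying \eqref{eq4.decayu}.

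\textbf{Estimating the error terms.} We split into $L_1,L_2,L_3$ as before.
\begin{itemize}
\item For $L_1$, we localize $u^j$ with the cutoff $\psi$ and move the $d_s$ onto $w$. Remark \ref{rmk.h1norm} then gives a bound by $(\mathcal{E}(u;B_1)^{1/2}+\|\mu\|_{L^{p_0}})\widetilde{\mathcal{E}}(w;B_1)^{1/2}$ times the BMO norm from Lemma \ref{lem.locbmo}, hence by $\epsilon$ times these quantities.
\item The term $J_{1,2}=\int (u^i\mu^j-u^j\mu^i)\overline u^j w^i$ and $L_3$ use $|u|=1$ and Sobolev embedding $w\in L^{2n/(n-2s)}$. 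This needs exactly $p_0\ge \frac{2n}{n+2s}$, which holds since $p_0>\frac{2n}{n+2s}$.
\item $L_2$ is handled by Lemma \ref{lem.gagnir} with $\theta=1/3$.
\item The tail pieces $J_2,J_3,L_{2,2}$ produce $\epsilon^{q-1}$ factors, as before.
\end{itemize}
Absorbing terms yields
\begin{equation*}
\mathcal{E}(u;B_{\rho})\le c\left[\left(\rho^{2\alpha}+\epsilon^{q-1}\rho^{2s-n}\right)\mathcal{E}(u;B_1)+\rho^{2s-n}\left(\dashint_{B_1}|\mu|^{p_0}\right)^{2/p_0}\right],
\end{equation*}
and after rescaling the $\mu$ term becomes $(\dashint_{B_R}|R^{2s}\mu|^{p_0})^{2/p_0}$.

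\textbf{Iteration.} We pick an intermediate exponent $\alpha<\beta'<\min\{2s,1\}$ with $\beta' q<2s$ and apply Lemma \ref{lem.potsgra} with $\beta'$. Then we choose $\rho_0$ with $c\rho_0^{2(\beta'-\alpha)}\le 1/128$, and $\epsilon$ small enough that the $\epsilon$-term is at most $1/64$ at $\rho_0$. Dividing by $(\rho_0 r)^{2\alpha}$ gives
\begin{equation*}
\frac{\mathcal{E}(u;B_{\rho_0 r})}{(\rho_0 r)^{2\alpha}}\le \frac14\,\frac{\mathcal{E}(u;B_r)}{r^{2\alpha}}+c\,M_{p_0(2s-\alpha),R}(|\mu|^{p_0})(x_0)^{2/p_0},
\end{equation*}
because $r^{2s-2\alpha}(\dashint_{B_r}|\mu|^{p_0})^{1/p_0}$ is the relevant maximal quantity. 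The $N_{r_0}$ absorption argument of Lemma \ref{lem.decayu2} then bounds $\sup_r r^{-2\alpha}\mathcal{E}(u;B_r)$. Lemma \ref{lem.spi} converts this bound into \eqref{goal.decayu3}.

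\textbf{Main obstacle.} Since $\alpha>s$, the smallness of $\mathcal{E}$ alone does not give the decay rate $\rho^{2\alpha}$. The rate has to come entirely from $v$, so the perturbation terms must carry an $\epsilon$ factor uniformly in the scale. The delicate points are keeping the tail and $J_{2,4}$-type terms at size $\epsilon^{q-1}\mathcal{E}$, and the admissibility of $p_0$ for the $\mu$-terms.
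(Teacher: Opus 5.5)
Your proposal is correct and is essentially the paper's proof. The paper likewise obtains the $p=2$ div-curl estimate (Lemma \ref{lem.h1norm2}) from Remark \ref{rmk.h1norm}, reruns Lemma \ref{lem.decayu} using Lemma \ref{lem.potsgra} (at an intermediate exponent $\alpha_0>\alpha$ with $\alpha_0q<2s$) in place of Lemma \ref{lem.fracdecay}, and then iterates as in Lemma \ref{lem.decayu2}; your write-up additionally makes explicit the supercritical factors $\rho^{2s-n}$ and the condition $p_0>\frac{2n}{n+2s}$ for the $\mu$-terms, which the paper covers only with ``without changing the proof''.
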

\begin{proof}
    There is a small constant $\alpha_0>\alpha$ such that $\alpha_0\in(s,\min\{2s,1\})$ with $\alpha_0 q<2s$. First, we note that Lemma \ref{lem.h1norm2} with $p=2$ and $p_0=\min\left\{\frac{n}{2s},\frac{2n}{n+s}\right\}$ holds by proceeding along the same lines as in the proof of Lemma \ref{lem.h1norm2} together with Remark \ref{rmk.h1norm}. Therefore, now using this and Lemma \ref{lem.potsgra}, we obtain the same results given in Lemma \ref{lem.decayu} with $p=2$ and $\beta=\alpha_1$ without changing the proof. Therefore, applying the same approach as in the proof of Lemma \ref{lem.decayu2} with $p=2$ yields \eqref{goal.decayu3}.
\end{proof}

Before we give the proof of Theorem \ref{thm1}, we observe the following lemma.
\begin{lemma}\label{lem.dsuan}
    Let $\beta\in(s,1)$, $|u|\equiv 1$ on $\bbR^n$ and $a(x,y)=a(x-y)$. If $u\in C^{\beta}(B_{2R}(x_0))$, then we have
    \begin{equation}\label{goal1.dsuan}
        g(x)\coloneqq \left(|\mathbf{d}_{s,a}u|^2u\right)(x)\in L^\infty(B_{R/2}(x_0)).
    \end{equation}
\end{lemma}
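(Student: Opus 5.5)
Since $|u|\equiv 1$, the vector $u(x)$ has norm one. It therefore suffices to show that the scalar function
\[
|\mathbf{d}_{s,a}u|^2(x)=\int_{\bbR^n}\frac{|u(x)-u(y)|^2}{|x-y|^{2s}}\frac{a(x-y)}{|x-y|^n}\,dy
\]
is bounded uniformly for $x\in B_{R/2}(x_0)$. Because $a\le\Lambda$, the coefficient can be dropped at the cost of a factor $\Lambda$; the translation invariance of $a$ is not really needed for this bound. We may assume $x_0=0$.

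Fix $x\in B_{R/2}$ and split the $y$-integral into the two regions $B_{R}(x)$ and $\bbR^n\setminus B_{R}(x)$. Note that $B_R(x)\subset B_{3R/2}\subset B_{2R}$.

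\textbf{Near region.} On $B_R(x)$ we use the H\"older assumption: $|u(x)-u(y)|\le [u]_{C^\beta(B_{2R})}|x-y|^\beta$. This gives
\[
\int_{B_R(x)}\frac{|u(x)-u(y)|^2}{|x-y|^{n+2s}}\,dy\le [u]_{C^\beta(B_{2R})}^2\int_{B_R(x)}|x-y|^{2\beta-2s-n}\,dy\le c\,[u]^2_{C^\beta(B_{2R})}R^{2(\beta-s)}.
\]
The last integral is finite precisely because $\beta>s$. This is the only place where that hypothesis enters, and it is the crux of the argument: for $\beta\le s$ the integral diverges at the diagonal.

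\textbf{Far region.} On $\bbR^n\setminus B_R(x)$ we use $|u(x)-u(y)|\le 2$, which follows from $|u|\equiv1$. This gives
\[
\int_{\bbR^n\setminus B_R(x)}\frac{4}{|x-y|^{n+2s}}\,dy\le cR^{-2s}.
\]
No tail assumption is required here, since $u$ is bounded on all of $\bbR^n$.

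Adding the two bounds gives
\[
\sup_{x\in B_{R/2}}|g(x)|\le c\Lambda\left([u]^2_{C^\beta(B_{2R})}R^{2(\beta-s)}+R^{-2s}\right)<\infty,
\]
with $c=c(n,s,\beta)$. This proves \eqref{goal1.dsuan}.

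Measurability of $g$ follows from Tonelli's theorem, since the integrand is nonnegative. The only real obstacle is the convergence of the near-diagonal integral, which is handled by the condition $\beta>s$ as noted above. Everything else is an elementary computation.
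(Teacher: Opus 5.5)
Your proof is correct and follows the same route as the paper: split the integral at $|x-y|=R$, use the $C^\beta$ bound with $\beta>s$ to make the near-diagonal integral $\int_{B_R}|y|^{2\beta-2s-n}\,dy$ converge, and use $|u|\equiv 1$ together with $a\le\Lambda$ to control the far part. Your remark that translation invariance of $a$ is not actually needed for this bound is also accurate.
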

\begin{proof}
First, we rewrite the function $g(x)$ as
\begin{align*}
    g(x)=\left[\int_{\bbR^n}\frac{|u(x+y)-u(x)|^2}{|y|^{n+sp}}a(y)\,dy\right]u(x).
\end{align*}
Since $u\in C^{\beta}(B_{2R}(x_0))$, we have 
\begin{align*}
    |g(x)|\leq c\left(\int_{B_{R}}[u]^2_{C^{\beta}(B_{2R}(x_0))}\frac{\,dy}{|y|^{n+2s-2\beta}}+\int_{\bbR^n\setminus B_R}\frac{\,dy}{|y|^{n+sp}}\right)<\infty 
\end{align*}
for any $x\in B_{R/2}(x_0)$, which gives \eqref{goal1.dsuan}.
\end{proof}

Using Lemma \ref{lem.decayu3} and Lemma \ref{lem.dsuan}, we are now ready to prove the optimal H\"older regularity of the solution in terms of the right-hand side under our smallness assumption on the solution.
\begin{proof}[Proof of Theorem \ref{thm1}.]
Let us fix $p_0=\min\left\{\frac{n}{2s},\frac{2n}{n+s}\right\}$. Let $\mu \in L^{\gamma}(B_{2R}(x_0);\bbR^N)$. We now prove our main result according to the range $\gamma$.
\begin{enumerate}
    \item Let $\gamma\in(n/2s,n/s]$. We now choose 
    \begin{equation}\label{choi1.thm1}
        q\coloneqq 5/4,\quad \alpha\coloneqq 7s/6\quad\text{and}\quad \vartheta\coloneqq 2s-n/\gamma.
    \end{equation}
    Next, we fix $R_0=R/4$. Then we observe $u$ is a weak solution to \eqref{goal.decayu2} with $R=R_0$. First, by considering the choice of the parameter $\rho_0$ determined in \eqref{ineq.rho0} and considering \eqref{choi1.thm1}, we deduce 
    \begin{equation*}
        c\rho_0^{2\alpha-2\vartheta}\leq c\rho_0^{s/6}\leq 1/128,
    \end{equation*}
    where $\rho_0=\rho_0(n,s,\Lambda,N,\gamma)$, as the constant $c$ depends only on $n,s,\Lambda$ and $N$. Therefore, we can also choose $\epsilon_0=\epsilon_0(n,s,\Lambda,N,\gamma)$ determined in \eqref{ineq.epsilon} so that 
    \begin{align*}
        \sup_{0<r\leq R_0}r^{-\vartheta}\dashint_{B_r(x_0)}|u-(u)_{B_{r}(z)}|\,dx\leq c\left(\epsilon_0/R_0^{\vartheta}+M_{p_0(4s-2\vartheta),R_0}(|\mu|^{p_0})(x_0)^{\frac{1}{p_0}}\right)
    \end{align*}
    for some constant $c=c(n,s,\Lambda,N,\gamma)$. By \cite[Proposition 2.5]{DieNow23}, we have 
    \begin{align*}
        \sup_{0<r\leq R_0}r^{-\vartheta}\dashint_{B_r(x_0)}|u-(u)_{B_{r}(z)}|\,dx\leq c\left(\epsilon_0/R_0^{\vartheta}+\|\mu\|_{L^{\gamma}(B_{2R_0}(x_0))}\right).
    \end{align*}
    As in the last lines of the proof of Theorem \ref{thm0}, we deduce $u\in C^{\vartheta}(B_R(x_0);\bbR^N)$ whenever
    \begin{equation*}
        [u]_{M^{2,0}(B_{2R}(x_0))}\leq \epsilon_0
    \end{equation*}
    for some $\epsilon_0=\epsilon_0(n,s,\Lambda,N,\gamma)$.
    \item $\gamma>n/s$.
    Let us choose 
    \begin{equation}\label{choi2.thm1}
        q\coloneqq 5/4,\quad \alpha\coloneqq 7s/6\quad\text{and}\quad \vartheta\coloneqq \min\{13s/12,2s-n/p\}.
    \end{equation}
    As in the proof of the case when $\gamma\in(n/2s,n/s]$, we have if 
    \begin{equation*}
        [u]_{M^{2,0}(B_{2R}(x_0))}\leq \epsilon_1
    \end{equation*}for some $\epsilon_1=\epsilon_1(n,s,\Lambda,N,p)$, then we have $u\in C^{\vartheta}(B_R(x_0))$, where $\vartheta>s$. By Lemma \ref{lem.dsuan}, we have that $u$ is a weak solution to 
    \begin{align*}
        \mathcal{L}u=g+\mu\quad\text{in } B_{R/4(x_0)},
    \end{align*}
    where $g\in L^\infty(B_{R/4}(x_0);\bbR^N)$ and $\mu\in L^{\gamma}(B_{R/4}(x_0);\bbR^N)$. Since $u^i$ is a weak solution to
    \begin{align*}
        \mathcal{L}u^i=g^i+\mu^i\quad\text{in }B_{R/4}(x_0),
    \end{align*}
    by using \cite[Theorem 2.4.3]{FerRos24}, we get $u\in C^{2s-n/\gamma}(B_{R/8}(x_0))$ when $\gamma\neq\frac{n}{2s-1}$. In addition, when $\mu\in L^{\frac{n}{2s-1},1}(B_{2R}(x_0))$ with $s>1/2$, then by \cite[Corollary 1.11]{KuuSimYan22}, we have $u^i\in C^{1}(B_{R/8}(x_0))$.
\end{enumerate}
Lastly, we note from Lemma \ref{lem.reltildeee} and \eqref{defn.m20} that 
\begin{align}\label{ineqaa}
    [u]^2_{M^{2,0}(B_{2R}(x_0))}\leq c\sup_{B_\rho(z)\subset B_{2R}(x_0)}\rho^{2s-n}\widetilde{\mathcal{E}}(u;B_\rho(z))
\end{align}
for some constant $c=c(n,s,\Lambda,N)$, where the constant $q=5/4$ is determined in \eqref{choi1.thm1} and \eqref{choi2.thm1}. Therefore, by taking $\epsilon$ sufficiently small, we get the desired partial regularity result. This completes the proof.
    
\end{proof}
We now use Lemma \ref{lem.decayu3} together to derive a higher Sobolev regularity of the solution.
\begin{theorem}\label{thm2}
    Let $n>2s$ and let $u$ be a weak solution to 
    \begin{equation*}
        \mathcal{L}u=|\mathbf{d}_{s,a}u|^2u+\mu\quad\text{in }B_{2R}(x_0)
    \end{equation*} with $a(x,y)=a(x-y)$. Let us fix $\sigma\in(0,\min\{1,2s\})$ and $\gamma\geq2$. If 
    \begin{align*}
        [u]_{M^{2,0}(B_{2R}(x_0))}\leq \epsilon
    \end{align*}
    for some small $\epsilon=\epsilon(n,s,N,\Lambda,\sigma,q)$ and $\mu\in L^{\gamma}(B_{2R}(x_0);\bbR^N)$ with $q\geq2$, then we have $u\in W^{\sigma,\frac{n\gamma}{n-(2s-\sigma)\gamma}}(B_R(x_0);\bbR^N)$.
\end{theorem}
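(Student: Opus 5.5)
The plan is to treat the system as a linear nonlocal equation with a right-hand side whose integrability we can quantify, and then invoke a linear Calder\'on--Zygmund type estimate for $\mathcal{L}$ with translation invariant kernel.

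\emph{Step 1: H\"older regularity.} Since $[u]_{M^{2,0}(B_{2R}(x_0))}\leq\epsilon$, the excess is small at every center $z\in B_{R}(x_0)$ and every scale, not only at $x_0$. We apply Lemma \ref{lem.decayu3} at each such $z$ with a fixed $\alpha\in(s,\min\{2s,1\})$ and $\alpha q<2s$. By H\"older's inequality, $\mu\in L^\gamma$ with $\gamma\geq2$ gives
\begin{equation*}
M_{p_0(2s-\alpha),R}(|\mu|^{p_0})(z)^{1/p_0}\leq cR^{2s-\alpha-n/\gamma}\|\mu\|_{L^\gamma}.
\end{equation*}
This bound is useful provided $\alpha\leq 2s-n/\gamma$. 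If $\alpha$ is larger, we replace $\alpha$ by a smaller exponent $\vartheta\in(0,\alpha]$, running Lemma \ref{lem.decayu3} with decay rate $\vartheta$. Campanato's characterization then yields $u\in C^{\vartheta}(B_{3R/2}(x_0))$ for some $\vartheta>0$.

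\emph{Step 2: bootstrap to $\vartheta>s$.} If $\vartheta\le s$, we use the structure of the nonlinearity instead of Lemma \ref{lem.dsuan}. Writing $|\mathbf d_{s,a}u|^2u$ via the decomposition \eqref{eq3.decayu}, the term $|\mathbf{d}_{s,a}u|^2$ lies in $L^{r}_{\rm loc}$ whenever $u\in C^{\vartheta}\cap W^{s,2}$. We control it through the Gagliardo--Nirenberg inequality (Lemma \ref{lem.gagnir}) between the BMO bound $\lesssim\epsilon$ and higher $W^{s',t}$ norms. Where the target exponents allow, it is simpler to take $\vartheta>s$ directly in Step 1, which is possible when $2s-n/\gamma>s$. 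Once $\vartheta>s$, Lemma \ref{lem.dsuan} gives $g=|\mathbf d_{s,a}u|^2u\in L^\infty(B_{3R/4}(x_0))$. In the remaining range $2s-n/\gamma\le s$, I would try to show $g\in L^{\gamma}$ by estimating $\int_{B_\rho}|\mathbf d_{s,a}u|^2$ through the Morrey decay $\mathcal{E}(u;B_\rho)\lesssim\rho^{2\vartheta}$ from Step 1. This yields $g$ in a Morrey space $L^{1,n-2s+2\vartheta}$, and we then use Morrey-space (Adams type) potential estimates in place of Lebesgue ones.

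\emph{Step 3: linear theory.} Each component $u^i$ solves $\mathcal{L}u^i=g^i+\mu^i$ in $B_{3R/4}(x_0)$ with a translation invariant kernel. We localize with a cut-off, absorbing the tail contribution into a bounded right-hand side as in the proof of Lemma \ref{lem.fracdecay} via \cite[Lemma 2.6]{DieKimLeeNow24d}. We then apply the Calder\'on--Zygmund estimates for $\mathcal{L}$, i.e.\ the $L^\gamma\to W^{2s-\delta,\gamma}$ bounds of the type used in Lemma \ref{lem.high.diff.l1}, now with exponent $\gamma$ and kernel $a(x-y)$ via the difference quotient argument of \cite{DieKimLeeNow24}. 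This gives $u\in W^{2s-\delta,\gamma}(B_{R}(x_0))$ for every small $\delta>0$. Finally, the fractional Sobolev embedding $W^{2s-\delta,\gamma}\hookrightarrow W^{\sigma,\frac{n\gamma}{n-(2s-\delta-\sigma)\gamma}}$, with $\delta$ chosen small and the mismatch absorbed, produces the claimed space. Alternatively, we can use the Riesz potential bound: a right-hand side in $L^\gamma$ gives $(-\Delta)^{\sigma/2}u\in L^{n\gamma/(n-(2s-\sigma)\gamma)}$ directly, which avoids the $\delta$-loss and matches the stated exponent exactly. So I would prefer the potential representation $u=\mathcal{L}^{-1}(\eta(g+\mu))+\text{(smooth)}$ in the interior.

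\emph{Main obstacle.} The hard part is controlling $g$ in $L^\gamma$ when only $\vartheta\le s$ is available, i.e.\ $2s-n/\gamma\le s$ (Step 2). There I would first try the Morrey-space bootstrap described above, iterating Step 1 and Step 3 with improving exponents. A second concern is the endpoint exactness of $\sigma$, which the potential representation should handle.
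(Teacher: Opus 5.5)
Your argument only closes when $\gamma>n/s$. That is the only range where Step~1 can give an exponent $\vartheta>s$, so that Lemma~\ref{lem.dsuan} makes $g=|\mathbf{d}_{s,a}u|^2u$ bounded and Step~3 applies; there your route is plausible, mirroring the second case in the proof of Theorem~\ref{thm1}. But $n>2s$ forces $n/s>2$, so the basic case $\gamma=2$ (used for the heat flow in Theorem~\ref{thm4}) never lies in this range. For $n/(2s)<\gamma\le n/s$ you only get $\vartheta\le s$, and your route to $g\in L^\gamma$ is a sketch, not an argument. For $2\le\gamma\le n/(2s)$, which the hypotheses allow once $n\ge 4s$, you have $2s-n/\gamma\le 0$. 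Then Step~1 gives no positive H\"older exponent at all. This is not a technical defect: with $\mu\in L^\gamma$, $\gamma<n/(2s)$, continuity of $u$ cannot be expected in general, since already the linear problem has unbounded solutions for such data. In that regime your only size information on $|\mathbf{d}_{s,a}u|^2$ is smallness of the scale-invariant quantity $\sup_\rho\rho^{2s-n}\int_{B_\rho}|\mathbf{d}_{s,a}u|^2$. That quantity is exactly critical, so treating $g$ as a datum with Morrey/Adams estimates gains nothing, because it discards the div-curl compensation. Note also that the paper gets $|\mathbf{d}_su|^2\in L^\gamma$ only for $\gamma>n/(2s)$, and only \emph{as a consequence} of Theorem~\ref{thm2} (proof of Theorem~\ref{thm3}). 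Your Steps~2--3 would in effect be proving the downstream result first.

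The missing idea is that you already invoke the right tool but throw away its strength. Lemma~\ref{lem.decayu3} contains the compensation; what you must not do is bound the fractional maximal function of $\mu$ by the constant $R^{2s-\alpha-n/\gamma}\|\mu\|_{L^\gamma}$. Since the smallness hypothesis is uniform over all balls in $B_{2R}(x_0)$, you may apply the lemma at every $z\in B_R(x_0)$. Use a fixed $\sigma_0\in(\sigma,\min\{1,2s\})$; no restriction $\sigma_0\le 2s-n/\gamma$ is needed. This gives the pointwise bound
\begin{equation*}
\sup_{0<r\le R}r^{-\sigma_0}\dashint_{B_r(z)}|u-(u)_{B_r(z)}|\,dx\le c\Bigl(\epsilon R^{-\sigma_0}+M_{p_0(2s-\sigma_0),R}(|\mu|^{p_0})(z)^{\frac1{p_0}}\Bigr).
\end{equation*}
The right-hand side may be infinite at some points, but it lies in $L^{n\gamma/(n-(2s-\sigma_0)\gamma)}(B_R(x_0))$. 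This follows from the mapping properties of the fractional maximal operator, because $|\mu|^{p_0}\in L^{\gamma/p_0}$ with $\gamma/p_0>1$; this is where $\gamma\ge 2>p_0$ enters. The characterization of fractional Sobolev spaces through fractional sharp maximal functions \cite[Proposition 2.8]{DieNow23} then gives the stated fractional Sobolev regularity of $u$ on $B_R(x_0)$ directly. This is the paper's proof: it needs no continuity, no bootstrap, and no Calder\'on--Zygmund theory for $\mathcal{L}$.
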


\begin{proof}
   We fix $q\geq2$, $\sigma\in(0,\min\{1,2s\})$, $\sigma'=(\min\{1,2s\}+\sigma)/2>\sigma$ and $p_0=\min\left\{\frac{n}{2s},\frac{2n}{n+s}\right\}$. By taking $q<3/2$ sufficiently small so that $\sigma' q<2s$. Then by Lemma \ref{lem.decayu2}, there is a sufficiently small $\epsilon_0=\epsilon_0(n,s,\Lambda,N,\sigma)$ such that
    if 
    \begin{equation}\label{ineqthh14}
        [u]_{M^{2,0}(B_{2R}(x_0))}\leq \epsilon_0,
    \end{equation}
    then
    \begin{align*}
        \sup_{0<r\leq R}r^{-\sigma_0}\dashint_{B_r(x_0)}|u-(u)_{B_{r}(x_0)}|\,dx\leq c\left(\epsilon/R^{\sigma_0}+\left(M_{p_0(2s-\sigma_0),R}(|\mu|^{p_0})(x_0)\right)^{\frac1{p_0}}\right),
    \end{align*}
    where $c=c(n,s,\Lambda,N,\sigma)$ and $\sigma_0=(\sigma+\sigma')/2$. Since $\mu\in L^\gamma$,  by \cite[Proposition 2.8]{DieNow23}, we deduce  
    \begin{equation*}
        \left(M_{p_0(2s-\sigma),R}(|\mu|^{p_0})(x)\right)^{\frac1{p_0}}\in L^{\frac{n\gamma}{n-\gamma(2s-\sigma_0)}}.
    \end{equation*}Therefore, by \cite[Proposition 2.8]{DieNow23}, we have $u\in W^{\sigma,\frac{n\gamma}{n-(2s-\sigma)\gamma}}(B_{R}(x_0);\bbR^N)$ with the estimate
    \begin{align}\label{est1.thm2}
        [u]_{W^{\sigma,\frac{n\gamma}{n-(2s-\sigma)\gamma}}(B_{R}(x_0))}\leq c\left(1+\|\mu\|_{L^{\gamma}(B_{2R}(x_0))}\right)
    \end{align}
    for some constant $c=c(n,s,\Lambda,N,\sigma,\gamma,R)$. This completes the proof.
\end{proof}

Before, we give the proof of Theorem \ref{thm3}, we observe the following fractional Calder\'on-Zygmund type estimates. Note that our argument is based on the proof of \cite[Theorem 1.3]{BicWarZua17}.
\begin{lemma}\label{lem.2sr}
    Let us fix $\gamma\geq2$ and let $u\in W^{s,2}(B_{2R}(x_0))\cap W^{s,\gamma}(B_{2R}(x_0))$ with $|u|\equiv 1$ on $\bbR^n$ be a weak solution to
    \begin{align*}
        (-\Delta)^su=f\quad\text{in }B_R(x_0),
    \end{align*}
    where $f\in L^\gamma(B_{2R}(x_0))$. Then we have 
    \begin{align*}
        [u]_{W^{2s,\gamma}(B_{R/4}(x_0))}\leq \left([u]_{W^{s,\gamma}(B_{R}(x_0))}+\|f\|_{L^\gamma(B_{R}(x_0))}+1\right)
    \end{align*}
    for some constant $c=c(n,s,\gamma,R)$. In addition, if $\gamma>1$ and $s=1/2$, we have the same result.
\end{lemma}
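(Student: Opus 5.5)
We follow the localization strategy of \cite[Theorem 1.3]{BicWarZua17}: cut off $u$ and reduce to a global Calder\'on--Zygmund estimate for $(-\Delta)^s$ on $\bbR^n$, with the nonlocal terms produced by the cut-off treated as lower-order data. By scaling and translation we take $x_0=0$ and $R=1$. Fix $\eta\in C_c^\infty(B_{1/2})$ with $\eta\equiv1$ on $B_{3/8}$ and $0\le\eta\le1$, and set $w\coloneqq (u-(u)_{B_1})\eta$. Then
\begin{equation*}
(-\Delta)^s w=\eta f+\bigl(u-(u)_{B_1}\bigr)(-\Delta)^s\eta-I_s\bigl(u-(u)_{B_1},\eta\bigr)\quad\text{in }\bbR^n,
\end{equation*}
with $I_s(g,\eta)(x)=c_{n,s}\,\text{p.v.}\int_{\bbR^n}\frac{(g(x)-g(y))(\eta(x)-\eta(y))}{|x-y|^{n+2s}}\,dy$ the bilinear commutator. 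The key steps are: (i) bound the right-hand side $F$ in $L^\gamma(\bbR^n)$; (ii) apply the global estimate $[w]_{W^{2s,\gamma}(\bbR^n)}\le c\|(-\Delta)^sw\|_{L^\gamma(\bbR^n)}$ (for $2s<1$ this is the Triebel--Lizorkin/Bessel potential identification $H^{2s,\gamma}\hookrightarrow W^{2s,\gamma}$ for $\gamma\ge2$; for $2s\ge1$ it is the usual $W^{2s,\gamma}$ bound via Riesz transforms of $\nabla w$); (iii) use $w=u-(u)_{B_1}$ on $B_{3/8}\supset B_{1/4}$ to conclude.

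The first two terms of $F$ are routine. $\|\eta f\|_{L^\gamma}\le\|f\|_{L^\gamma(B_1)}$. Since $\eta$ is smooth and compactly supported, $(-\Delta)^s\eta$ is bounded and decays like $|x|^{-n-2s}$; with $|u|\equiv1$ on $\bbR^n$ we have $|u-(u)_{B_1}|\le2$ everywhere, so this term is bounded in $L^\gamma$ by a constant. This is exactly where the ``$+1$'' in the statement comes from, and why $|u|\equiv1$ is assumed: it replaces any tail quantity.

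The main obstacle is the commutator $I_s(g,\eta)$ with $g=u-(u)_{B_1}$. We split it into $y\in B_1$ and $y\notin B_1$. For $x\in B_{3/4}$ and $y\in B_1$, we use $|\eta(x)-\eta(y)|\le c|x-y|$ and H\"older with weight to get
\begin{equation*}
|I_s|\le c\Bigl(\int_{B_1}\frac{|g(x)-g(y)|^\gamma}{|x-y|^{n+s\gamma}}\,dy\Bigr)^{1/\gamma}\Bigl(\int_{B_1}|x-y|^{(1-s)\gamma'-n}\,dy\Bigr)^{1/\gamma'}.
\end{equation*}
The second factor is finite since $s<1$, so its $L^\gamma(B_{3/4})$ norm is controlled by $[u]_{W^{s,\gamma}(B_1)}$. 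For $y\notin B_1$, or for $x\notin B_{3/4}$ (where $\eta(x)=0$ and $y$ must lie in $\operatorname{supp}\eta$), the kernel is nonsingular and decays like $(1+|x|)^{-n-2s}$; together with $|g|\le2$ this gives a bounded $L^\gamma$ contribution. If the case $2s\ge1$ requires more, we would instead differentiate: apply the same argument to $\nabla w$ using the known $W^{1,\gamma}$-regularity from the $s$-order step (bootstrapping from $W^{s,\gamma}$ to $W^{2s-\varepsilon,\gamma}$ via the above, then once more). For the case $s=1/2$, $\gamma>1$, the same scheme works because the Riesz transform bound holds for all $\gamma\in(1,\infty)$ and $W^{1,\gamma}=H^{1,\gamma}$; the restriction $\gamma\ge2$ enters only through the embedding $H^{2s,\gamma}\subset W^{2s,\gamma}$ for non-integer $2s$.
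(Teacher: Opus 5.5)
Your proposal is correct and follows essentially the same route as the paper: cut off $u$, use the product rule $(-\Delta)^s(u\psi)=\psi(-\Delta)^su+u(-\Delta)^s\psi-I_s(u,\psi)$, bound $u(-\Delta)^s\psi$ by a constant using $|u|\equiv1$, and bound the commutator by $[u]_{W^{s,\gamma}(B_1)}+c$ via H\"older on the singular part. You then finish with the global Calder\'on--Zygmund estimate on $\mathbb{R}^n$. Subtracting $(u)_{B_1}$ is harmless but unnecessary, since $|u|\le1$ already does the job. Your explanation of the $\gamma\ge2$ restriction is a useful addition that the paper leaves implicit: it comes from the embedding $H^{2s,\gamma}\subset W^{2s,\gamma}$, which for $1<2s<2$ is also needed at the level of $\nabla w$, not only for $2s<1$. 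The same observation explains why $s=1/2$ allows every $\gamma>1$.
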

\begin{proof}
We may assume $R=1$ and $x_0=0$.
    Let us choose $\psi\in C_c^\infty(B_{1/2})$ with $\psi\equiv 1$ on $B_{1/4}$. Now we want to observe that $v=u\psi$ is a weak solution to 
    \begin{equation}\label{eq.2sr}
        (-\Delta)^sv=f\psi+g\quad\text{in }\bbR^n,
    \end{equation}
    where $g\in L^\gamma(\bbR^n)$. First, we note from the equation (3.2) given in the proof of \cite[Theorem 1.3]{BicWarZua17} that 
    \begin{align*}
        (-\Delta)^s v=\psi(-\Delta)^su+u(-\Delta)^s\psi-I_s(u,\psi),
    \end{align*}
    where 
    \begin{equation*}
        I_s(u,\psi)(x)\coloneqq \int_{\bbR^n}\frac{(u(x)-u(y))(\psi(x)-\psi(y))}{|x-y|^{n+2s}}\,dy.
    \end{equation*}
    Let us write 
    \begin{equation*}
        g(x)\coloneqq (u(-\Delta)^s\psi)(x)-I_s(u,\psi)(x)\eqqcolon g_1(x)+g_2(x)
    \end{equation*}
    to see that $g_1,g_2\in L^p(\bbR^n)$. First, using the fact that $\psi\equiv 0$ on $\bbR^n\setminus B_{1/2},$ we estimate 
    \begin{align*}
        \|g_1\|^p_{L^p(\bbR^n)}&\leq c\int_{B_1}\left|u(x)\int_{\bbR^n}\frac{\psi(x)-\psi(y)}{|x-y|^{n+2s}}\,dy\right|^\gamma\,dx\\
        &\quad+c\int_{\bbR^n\setminus B_1}\left|u(x)\int_{B_1}\frac{\psi(x)-\psi(y)}{|x-y|^{n+2s}}\,dy\right|^\gamma\,dx\eqqcolon \sum_{i=1}^2I_i,
    \end{align*}
    where $c=c(p)$. We note that 
    \begin{align*}
        \int_{\bbR^n}\frac{\psi(x)-\psi(y)}{|x-y|^{n+2s}}\,dy=\int_{\bbR^n}\frac{\psi(x)-\psi(x+z)}{|z|^{n+2s}}\,dy=\int_{\bbR^n}\frac{\psi(x)-\psi(x-z)}{|z|^{n+2s}}\,dy.
    \end{align*}
    Thus, we have 
    \begin{align*}
        \int_{\bbR^n}\frac{\psi(x)-\psi(y)}{|x-y|^{n+2s}}\,dy=\frac12\int_{\bbR^n}\frac{2\psi(x)-\psi(x+z)-\psi(x-z)}{|z|^{n+2s}}.
    \end{align*}
    Since $\psi\in C_c^\infty(B_{1/2})$, we have 
    \begin{align*}
        \left\|\int_{\bbR^n}\frac{\psi(x)-\psi(y)}{|x-y|^{n+2s}}\,dy\right\|_{L^\infty(\bbR^n)}\leq c
    \end{align*}
    for some constant $c=c(n,s)$. Using this, we have 
    \begin{align*}
        I_1\leq c\int_{B_1}|u(x)|^\gamma\,dx\leq c,
    \end{align*}
    where $c=c(n,s,p)$. We next estimate $I_2$ as 
    \begin{align*}
        I_2&\leq c\int_{\bbR^n\setminus B_1}\left(\int_{B_{1/2}}\frac{|u(x)|}{|x|^{n+2s}}|\psi(y)|\,dy\right)^\gamma\,dx\leq c,
    \end{align*}
    for some constant $c=c(n,s,p)$, where we have used the fact that $\psi\in C_c^\infty(B_{1/2})$ and 
    \begin{align*}
        |x|\leq 16|x-y|\quad\text{for any }x\in \bbR^n\setminus B_1\text{ and }y\in B_{1/2}
    \end{align*}
    Therefore, we have 
    \begin{equation}\label{g1.ineq}
        \|g_1\|_{L^\gamma(\bbR^n)}\leq c.
    \end{equation}
    We next observe from H\"older's inequality
    \begin{align*}
        \|g_2\|_{L^\gamma(\bbR^n)}&\leq c\int_{B_1}\left(\int_{B_1}\frac{|u(x)-u(y)|^\gamma}{|x-y|^{n+s\gamma}}\,dy\right)\left(\int_{B_1}\frac{|\psi(x)-\psi(y)|^{\gamma'}}{|x-y|^{n+s\gamma'}}\,dy\right)^{\frac{\gamma}{\gamma'}}\,dx\\
        &\quad+c\int_{\bbR^n\setminus B_1}\left|\int_{B_1}\frac{(u(x)-u(y))(\psi(x)-\psi(y))}{|x-y|^{n+2s}}\,dy\right|^\gamma\,dx\\
        &\quad+c\int_{B_1}\left|\int_{\bbR^n\setminus B_1}\frac{(u(x)-u(y))(\psi(x)-\psi(y))}{|x-y|^{n+2s}}\,dy\right|^\gamma\,dx\eqqcolon \sum_{i=1}^3J_i.
    \end{align*}
    Since $\psi\in C_C^\infty(B_{1/2})$, we have 
    \begin{align*}
        J_1\leq c[u]_{W^{s,\gamma}(B_1)}^p,
    \end{align*}
    and
    \begin{align*}
        J_2&\leq c\int_{\bbR^n\setminus B_1}\left|\int_{B_{1/2}}\frac{(u(x)-u(y))\psi(x)}{|x-y|^{n+2s}}\,dy\right|^\gamma\,dx+c\int_{\bbR^n\setminus B_1}\left|\int_{B_{1/2}}\frac{(u(x)-u(y))\psi(y)}{|x-y|^{n+2s}}\,dy\right|^\gamma\,dx.
    \end{align*}
    After a few simple computations together with the fact that $|u|\equiv 1$ on $\bbR^n$, we have 
    \begin{align*}
        J_2\leq c.
    \end{align*}
    Similarly, we have $J_3\leq c$. Thus, we have
    \begin{equation}\label{g2.ineq}
        \|g_2\|_{L^\gamma(\bbR^n)}\leq c([u]_{W^{s,\gamma}(B_1)}+1)
    \end{equation}
    for some constant $c=c(n,s,\gamma)$.
    Therefore, by \eqref{g1.ineq} and \eqref{g2.ineq} we have 
    \begin{align*}
        \|g\|_{L^\gamma(\bbR^n)}\leq c([u]_{W^{s,\gamma}(B_1)}+1)
    \end{align*}
    Since $(\psi(-\Delta)^su)(x)=(\psi f)(x)$, we get \eqref{eq.2sr}. Now using the standard regularity theory given in \cite{Ste70} or \cite[Theorem 2.7]{BicWarZua17}, we have 
    \begin{align*}
        [u\psi]_{W^{2s,\gamma}(\bbR^n)}\leq c\left(\|f\psi\|_{L^\gamma(\bbR^n)}+\|g\|_{L^\gamma(\bbR^n)}\right),
    \end{align*}
    which implies 
    \begin{align*}
        [u]_{W^{2s,\gamma}(B_{1/4})}\leq c\left(\|f\|_{L^\gamma(B_{1})}+[u]_{W^{s,\gamma}(B_1)}+1\right)
    \end{align*}
    for some constant $c=c(n,s,\gamma)$.
\end{proof}
By combining Theorem \ref{thm2} and Lemma \ref{lem.2sr}, we are able to prove Theorem \ref{thm3}.
\begin{proof}[Proof of Theorem \ref{thm3}]
    Let us fix $B_{2R}(x_0)\Subset \Omega$. Since $\gamma>n/2s$, there is a constant $\sigma=\sigma(n,s,\gamma)>s$ such that 
    \begin{align*}
        \frac{n\gamma}{n-(2s-\sigma)\gamma}>2\gamma.
    \end{align*}
    By Theorem \ref{thm2}, \eqref{est1.thm2} and \eqref{ineqaa}, there is a sufficiently small $\epsilon=\epsilon(n,s,\gamma,N)$ so that if 
    \begin{align}\label{conddd.thm17}
        \sup_{B_{\rho}(z)\Subset B_{2R}(x_0)}\rho^{2s-n}\widetilde{\mathcal{E}}(u;B_{\rho}(z))\leq \epsilon,
    \end{align}
    then 
    \begin{align*}
        [u]_{W^{\sigma,\frac{n\gamma}{n-(2s-\sigma)\gamma}}(B_R(x_0))}\leq c(1+\|\mu\|_{L^\gamma(B_{2R}(x_0))})
    \end{align*}
    for some constant $c=c(n,s,\gamma,N,R)$. Since $\sigma>s$ and $\frac{n\gamma}{n-(2s-\sigma)\gamma}>2\gamma$, we are able to prove 
    \begin{align*}
        |\mathbf{d}_{s}u|^2(x)\in L^\gamma(B_{R/2}(x_0)).
    \end{align*}
    To do this, we observe
    \begin{align*}
        \int_{B_{R/2}(x_0)}|\mathbf{d}_su|^{\gamma}(x)&\leq c\int_{B_{R/2}(x_0)}\left(\int_{B_{R}(x_0)}\frac{|u(x)-u(y)|^2}{|x-y|^{n+2s}}\,dy\right)^{\gamma}\,dx\\
        &\quad +c\int_{B_{R/2}(x_0)}\left(\int_{\bbR^n\setminus B_{R}(x_0)}\frac{|u(x)-u(y)|^2}{|x-y|^{n+2s}}\,dy\right)^{\gamma}\,dx\eqqcolon I_1+I_2.
    \end{align*}
    In addition, by the Sobolev embedding as in \cite[Proposition 2.5]{Now23v}, we have
    \begin{align*}
        [u]_{W^{s+\delta/2,2\gamma}(B_{R}(x_0))}\leq c[u]_{W^{\sigma,\frac{n\gamma}{n-(2s-\sigma)\gamma}}(B_R(x_0))},
    \end{align*}
    where $\delta\coloneqq (s-\sigma)/16$ and $c=c(n,s,\gamma,N,R)$.
    Then by H\"older's inequality, we have 
    \begin{align*}
        I_1&\leq \int_{B_{R/2}(x_0)}\left(\int_{B_R(x_0)}\frac{|u(x)-u(y)|^{2\gamma}}{|x-y|^{n+\gamma(2s+\delta)}}\,dy\right)\left(\int_{B_R(x_0)}\frac{1}{|x-y|^{n-\delta \gamma'}}\,dy\right)^{\frac{\gamma}{\gamma'}}\,dx\\
        &\leq c[u]_{W^{s+\delta/2,2\gamma}(B_{R}(x_0))}\\
        &\leq c(1+\|\mu\|_{L^\gamma(B_{2R}(x_0)})
    \end{align*}
    for some constant $c=c(n,s,\gamma,N,R)$. On the other hand, we estimate $I_2$ as 
    \begin{align*}
        I_2\leq c\int_{B_{R/2}(x_0)}\left(\int_{\bbR^n\setminus B_{R}(x_0)}\frac{1}{|y|^{n+s\gamma}}\,dy\right)^\gamma\,dx\leq c,
    \end{align*}
    where $c=c(n,s,\gamma,N)$. Combining the estimates $I_1$ and $I_2$ yields 
    \begin{align*}
        \int_{B_{R/2}(x_0)}(|\mathbf{d}_su|^2(x))^\gamma\leq c(1+\|\mu\|_{L^\gamma(B_{2R}(x_0)}),
    \end{align*}
    where $c=c(n,s,\gamma,N)$. Since $u^i$ is a weak solution to 
    \begin{align*}
        (-\Delta)^su^i=|\mathbf{d}_su|^2u^i+\mu^i\quad\text{in }B_{R/2}(x_0),
    \end{align*}
    where  $|\mathbf{d}_su|^2u^i,\mu^i\in L^\gamma(B_{R/2}(x_0))$, 
    by applying Lemma \ref{lem.2sr}, we have 
    \begin{align}\label{ineq22.thm3}
        [u^i]_{W^{2s,\gamma}(B_{R/8}(x_0))}\leq c(1+\|\mu^i\|_{L^\gamma(B_{R/2}(x_0)}),
    \end{align}
    where $c=c(n,s,\gamma,R)$. Since the inequality \eqref{ineq22.thm3} holds for every $i=1,2,\cdots, N$, we have  
    \begin{align}\label{ineq2.thm3}
        [u]_{W^{2s,\gamma}(B_{R/8}(x_0))}\leq c(1+\|\mu\|_{L^\gamma(B_{R/2}(x_0)}),
    \end{align} whenever \eqref{conddd.thm17} holds.
\end{proof}

Then using Theorem \ref{thm1} and Theorem \ref{thm3} together with \eqref{ineq.nspii} with $p=2$, we directly obtain the results given in Theorem \ref{cor2}.

As an application of Theorem \ref{thm2} and Theorem \ref{thm3}, we now prove higher differentiability of solutions to the fractional harmonic map heat flow.
\begin{proof}[Proof of Theorem \ref{thm4} and Corollary \ref{cor3}.]
    By the definition, we observe that for a.e. $t\in(0,T]$, $u$ is a weak solution to 
\begin{align*}
    (-\Delta)^su=|\mathbf{d}_{s}u|^2u-\partial_tu\quad\text{in }\Omega.
\end{align*}
Let us fix $\sigma\in(0,\min\{1,2s\})$ and $\sigma'\coloneqq (\sigma+\min\{1,2s\})/2$.
Let us choose $\epsilon=\epsilon(n,s,N,\sigma')$ which is determined in the proof of Theorem \ref{thm2} with $p=2$. 
Then, by the assumption \eqref{ass.thm4} given in Theorem \ref{thm2} and \eqref{est1.thm2}, we have 
\begin{align*}
        [u(\cdot,t)]^2_{W^{\sigma',\frac{2n}{n-2(2s-\sigma')}}(B_{R/4}(x_0))}\leq c(1+\|\partial_t u(\cdot,t)\|^2_{L^2(B_{2R}(x_0))})
    \end{align*}
    for some constant $c=c(n,s,N,\sigma,R)$, when $\epsilon=\epsilon(n,s,N,\sigma)$ is sufficiently small. 
    By the fractional Sobolev embedding as in \cite[Proposition 2.5]{Now23v}, we note that 
    \begin{align*}
         [u(\cdot,t)]_{W^{\sigma,2}(B_{R/4}(x_0))}\leq c[u(\cdot,t)]^2_{W^{\sigma',\frac{2n}{n-2(2s-\sigma')}}(B_{R/4}(x_0))}
    \end{align*}
    for some constant $c=c(n,s,N,\sigma,R)$.
    Therefore, we derive 
    \begin{align}\label{ineq1.thm3}
        \int_{I_{R/4}(t_0)}[u(\cdot,t)]^2_{W^{\sigma,2}(B_{R/4}(x_0))}\leq c\left(1+\int_{I_{R/4}(t_0)}\|\partial_tu(\cdot,t)\|^2_{L^2(B_{2R}(x_0))}\right).
    \end{align}
    In addition, if $n<4s$, then by \eqref{ineq2.thm3} we have 
    \begin{align*}
        [u(\cdot,t)]_{W^{2s,2}(B_{R/8}(x_0))}\leq c(1+\|\partial_t u(\cdot,t)\|_{L^2(B_{R/2}(x_0)}),
    \end{align*}
for a.e., $t\in I_{R/8}(t_0)$, provided that $\epsilon=\epsilon(n,s,N)$ is sufficiently small.  Using this, we derive the desired result, which completes the proof of Theorem \ref{thm4}. 
    
    Now we assume $n=1$ and $s=1/2$. Then as in the argument given in \eqref{ineq.nspii} together with the fact that $u\in L^\infty(0,T;\widehat{H}^s(\Omega;\bbR^N))$,  we have  for any $\delta>0$, there is a small constant $R_0=R_0(s,N)$ such that
    \begin{align*}
       \sup_{B_{\rho}(x)\subset B_{R_0}(x_0)}\,\sup_{t\in I_{R}(t_0)}\widetilde{\mathcal{E}}(u;B_{\rho}(x))\leq \delta.
    \end{align*}
    Thus, by taking $\delta=\delta(s,N)$ sufficiently small, we deduce 
    \begin{align*}
        \|u(\cdot,t)\|_{W^{1,2}(B_{R_0/4}(x_0))}\leq c\left(1+\|\mu(\cdot,t)\|_{L^2(B_{R_0/2}(x_0))}\right)
    \end{align*}
    for a.e., $t\in I_{R_0}(t_0)$, where $c=c(s,N)$. Therefore, we get
    \begin{align*}
        \|u\|_{L^2(I_{R_0/4}(t_0);W^{1,2}(B_{R_0/4}(x_0)))}\leq c\left(1+\|\mu\|_{L^2(Q_{R_0/2}(z_0))}\right),
    \end{align*}
    which completes the proof of Corollary \ref{cor3}.
    \end{proof}

   \section{Applications to harmonic maps with free boundaries and geometric flows of Plateau-type}\label{sec:applications}
    
    As explained in the introduction, fractional harmonic maps (at least for the case $s=1/2$) are related to a class of harmonic maps with free boundary. In this section, we explain in more details such connection, draw some relevant statements for geometric problems as consequences of some of our results and expand on some possible generalizations and open problems. 
    
    A major tool in the study of geometric problems is the existence of a monotone quantity. In the case of the standard Dirichlet energy, this amounts to its localized rescaled version being monotone in the radius and in the case of the parabolic heat flow a weighted version of the Dirichlet energy in some space-time cylinders. The existence of such quantity has important consequences for regularity, structure of the singular set, quantitative differentiation aspects, dimension reduction \`a la Federer just to name a few. When the problem has some conformal invariance ($n=2$ for the classical case), monotonicity is actually missing but the conformality is providing a revised tool to handle all these aspects. 
    
    A major drawback of our general energies is the lack of monotonicity in general dimensions as well as the range of parameters $(s,p)$. Nevertheless, in the case of the critical dimension and power, i.e. $n=sp$ our results indeed provide new estimates as stated in the introduction. Note that in the case of a general dimension $n\geq1$ and the constant coefficient version of $\mathcal L_s$, i.e. the fractional Laplacian there is indeed a monotonicity formula in terms of the Caffarelli-Silvestre extension, which indeed allows to prove partial regularity as developed by Millot-Pegon-Schikorra \cite{MilPegSch21}. For our general operators, Caffarelli-Silvestre is not available and at the moment there is no useful monotone quantity. 
    Another important aspects of fractional harmonic maps is the full convergence of blow-ups by means of the triviality of the defect measure in terms of a Marstrand argument as discovered in \cite{MR3900821} and exploited in \cite{MilPegSch21}. This aspect is also related to the existence of a monotone quantity (to apply Marstrand structure theorem of densities) and as a consequence we do not know if such compactness holds for our generalized maps. We conjecture however that such compactness results should hold without the structural assumptions of the operator being the fractional Laplacian.  
    
    We now turn to the application of our results to the case of harmonic maps with free boundary. Though half-harmonic maps have been introduced by Da Lio and Riviere in early 2000, the third author with Millot in \cite{millotSire} uncovered that they correspond to a new formulation of harmonic mappings with free boundary, a classical topic in geometric analysis. We introduce below the class of free boundary maps under consideration. 
    
     \begin{definition}\label{defFBHarm}
Let  $\Omega\subset \mathbb R^{n+1}_+$ be a bounded  open set whose boundary can be partitioned into $\partial^0 \Omega \subset \partial \mathbb R^{n+1}_+$ and $\partial^+\Omega \subset \partial \mathbb R^{n+1}_+$ both of which having positive measure. Let $\;u\in H^1(\Omega;\mathbb{R}^N)$ be such that $|u|=1$ $\mathscr{H}^n$-a.e. on $\partial^0 \Omega$. 
We say that $u$ is {\it weakly harmonic  in $\Omega$ with respect to the (partially) free boundary condition $u(\partial^0\Omega)\subset \mathbb{S}^{N-1}$} if  
\begin{equation}\label{weakformharmmap}
\int_\Omega \nabla u \cdot \nabla \Phi\,dx=0
\end{equation}
for all $\Phi \in H^1(\Omega;\mathbb{R}^N)\cap L^\infty(\Omega)$ compactly supported  $\Omega\cup\partial^0 \Omega$ and satisfying 
\begin{equation}\label{condtestharmon}
\Phi(x)\in T_{u(x)}\mathbb{S}^{N-1} \quad\text{$\mathscr{H}^n$-a.e. on $\partial^0 \Omega$}\,.
\end{equation}
In short, we shall say that $u$ is a {\it weak $(\mathbb{S}^{N-1}, \partial^0\Omega)$-boundary  harmonic map in $\Omega$}.  
\end{definition}

In view of the discussion above, let us mention that $(\mathbb{S}^{m-1}, \partial^0\Omega)$-boundary harmonic maps belong to a larger class of harmonic maps known in the literature as {\it harmonic maps with partially free boundary}, see \cite{BaldesMM1982,DuzaarSteffenAA1989,DuzaarSteffen1989JRAM,HardtLinCPAM1989,GulliverJostJRAM1987,DuzaarGrotowski94,sch} and references therein. In most studies, one considers a smooth compact Riemannian manifold $\mathcal{M}$ without boundary (that we can assume to be isometrically embedded in some Euclidean space by the Nash embedding Theorem), and $\mathcal{N}$ a smooth closed submanifold of $\mathcal{M}$. The boundary portion $\partial^0\Omega$ is called {\it the partially free boundary}, and $\mathcal{N}$ is {\it the supporting manifold}. Then $\mathcal{M}$-valued (weak) harmonic maps in $\Omega$ with the partially free boundary condition $u(\partial^0\Omega)\subset\mathcal{N}$ are defined as critical points of the Dirichlet energy  under the constraints  $u(x)\in\mathcal{M}$ for a.e. $x\in\Omega$ and $u(x)\in\mathcal{N}$ for $\mathscr{H}^n$-a.e. $x\in\partial^0\Omega$. For $(\mathbb{S}^{N-1}, \partial^0\Omega)$-boundary harmonic maps, we may consider the submanifold 
$\mathcal{N}=\mathbb{S}^{N-1}$ of the target $\mathcal{M}=\mathbb R^N$. However, to apply known results on harmonic maps, the compactness of $\mathcal{M}$ is usually required. A way to avoid this problem is to consider {\it bounded} $(\mathbb{S}^{N-1}, \partial^0\Omega)$-boundary harmonic maps, noticing that $\mathbb{S}^{N-1}$ can be viewed as a submanifold of a flat torus $\mathcal{M}=\mathbb{R}^N/r\mathbb{Z}^N$ with factor $r>2$.  

By a small adaptation of an argument introduced in \cite{millotSire}, we know that the weak $(\mathbb{S}^{N-1}, \partial^0\Omega)$-boundary harmonic map system can be reformulated as the following system considered in the distributional sense
\begin{equation}\label{eqweakFBCharm}
\begin{cases}
\Delta u = 0 & \text{in $\Omega$}\,,\\[8pt]
\displaystyle \frac{\partial u}{\partial\nu} + \mu \ \bot  \ T_u\,\mathbb{S}^{N-1} & \text{in $H^{-1/2}(\partial^0 \Omega)$}\,.
\end{cases}
\end{equation}
A classical result shows that the Dirichlet-to-Neumann map of an harmonic extension, considered as a map from $H^{1/2}$ into $H^{-1/2}$,  is precisely the root of the Laplacian. We can relate half-harmonic maps into $\mathbb{S}^{N-1}$  to $(\mathbb{S}^{N-1}, \partial^0\Omega)$-boundary harmonic maps and prove the following result (see \cite{millotSire}):
  
 \begin{proposition}[\bf Criticality transfer]\label{fracweakharm}
 Let $v\in \widehat H^{1/2}(\omega;\mathbb{R}^{N})$  be a weak 1/2-harmonic map into $\mathbb{S}^{N-1}$ in~$\omega$,  
 and let $v^{ext}$ be its harmonic extension to $\mathbb{R}^{n+1}_+$ given by the convolution with the Poisson kernel.
 Then $v^{ext}$ is  a weak $(\mathbb{S}^{N-1}, \partial^0\Omega)$-boundary  harmonic map in $\Omega$ for every admissible bounded  open set $\Omega\subset \mathbb R^{n+1}_+$  such that $\overline{\partial^0\Omega}\subset \omega$. 
  \end{proposition}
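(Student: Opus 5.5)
The plan is to verify directly that $V:=v^{ext}$ satisfies \eqref{weakformharmmap} for all admissible test maps $\Phi$. The strategy is to move the bulk integral to the flat boundary by integrating by parts, identify the resulting boundary term with $(-\Delta)^{1/2}v$ via the Dirichlet-to-Neumann characterization, and then use the half-harmonic map equation $(-\Delta)^{1/2}v=|\mathbf{d}_{1/2}v|^2v$ from \cite{millotSire}, whose right-hand side is normal to the sphere.

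\textbf{Regularity and interior equation.} Since $v\in\widehat H^{1/2}(\omega)$ and $|v|=1$ a.e. (so $v\in L^\infty$), the Poisson extension is harmonic in $\bbR^{n+1}_+$ and bounded by $1$. I will check that $V\in H^1(\Omega)$ for every admissible bounded $\Omega$ with $\overline{\partial^0\Omega}\subset\omega$. Locally near $\omega$ this follows from the $H^{1/2}$ seminorm on $\omega$. The contribution of $v$ outside $\omega$ is handled by the tail bound $\int_{\bbR^n}|v(y)-c|\,(1+|y|)^{-n-1}dy<\infty$ together with smoothness of the Poisson kernel away from $\overline{\partial^0\Omega}$. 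Its trace on $\partial^0\Omega$ is $v$.

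\textbf{Key identity.} Fix $\Phi\in H^1(\Omega)\cap L^\infty$ supported in $\Omega\cup\partial^0\Omega$ with $\Phi\in T_v\mathbb S^{N-1}$ on $\partial^0\Omega$, and set $\varphi=\Phi|_{\partial^0\Omega}$. I will prove
\begin{equation*}
\int_\Omega\nabla V\cdot\nabla\Phi\,dx=\iint d_{1/2}v\, d_{1/2}\varphi\,\frac{c_n\,dx\,dy}{|x-y|^{n}},
\end{equation*}
i.e. the pairing $\langle(-\Delta)^{1/2}v,\varphi\rangle$. To do this, extend $\Phi$ by zero to $\bbR^{n+1}_+$, which is legitimate because of its support. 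Write $\Phi=\Phi^{ext}+(\Phi-\Phi^{ext})$, where $\Phi^{ext}$ is the Poisson extension of $\varphi$. The difference vanishes on $\bbR^n$, so $\int\nabla V\cdot\nabla(\Phi-\Phi^{ext})=0$ by harmonicity of $V$, after a cut-off approximation at infinity using the decay of $\nabla V$ and $\nabla\Phi^{ext}$. The remaining term $\int\nabla V\cdot\nabla\Phi^{ext}$ equals the $H^{1/2}$ bilinear form by Plancherel in the tangential variables. The main obstacle is that $v$ is only locally $H^{1/2}$ with a tail and $\nabla V$ need not be globally $L^2$. I would handle this by splitting $v=v\eta+v(1-\eta)$ with $\eta$ a cutoff equal to $1$ near $\overline{\partial^0\Omega}$. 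The part $v\eta$ lies in $H^{1/2}(\bbR^n)$, so the global identity applies to it. The part $v(1-\eta)$ has a smooth extension near $\operatorname{supp}\varphi$, so the identity for it follows by classical Green's formula; its normal derivative there is $-(-\Delta)^{1/2}(v(1-\eta))$, computed pointwise from the kernel.

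\textbf{Conclusion.} By \cite{millotSire} (the $a\equiv1$, $\mu=0$, $s=1/2$ case of \eqref{eq.main}), the pairing equals $\int_{\partial^0\Omega}|\mathbf{d}_{1/2}v|^2\,v\cdot\varphi$. To justify this with the test function $\varphi$, which is $H^{1/2}\cap L^\infty$ rather than smooth, I use density together with boundedness of $\varphi$. Since $\varphi\in T_v\mathbb S^{N-1}$, we have $v\cdot\varphi=0$ a.e., so the integral vanishes. Hence \eqref{weakformharmmap} holds and $V$ is a weak $(\mathbb S^{N-1},\partial^0\Omega)$-boundary harmonic map.
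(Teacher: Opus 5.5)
Your argument is correct. It is essentially the route behind this proposition, which the paper does not prove itself but attributes to \cite{millotSire}: establish the Dirichlet-to-Neumann identity $\int_\Omega\nabla v^{ext}\cdot\nabla\Phi\,dx=\langle(-\Delta)^{1/2}v,\Phi|_{\partial^0\Omega}\rangle$ for $v\in\widehat H^{1/2}(\omega)$ and $\Phi$ compactly supported in $\Omega\cup\partial^0\Omega$, then use the weak $1/2$-harmonic map equation, tested with the bounded tangent trace, to make the right-hand side vanish. Your cutoff splitting $v=v\eta+v(1-\eta)$ is what makes the identity rigorous: it is needed because $\nabla v^{ext}$ need not be square integrable on $\mathbb{R}^{n+1}_+$ globally, nor even near $\partial\mathbb{R}^{n+1}_+\setminus\overline{\omega}$. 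Your density-plus-dominated-convergence step correctly handles the non-smooth test function $\varphi$.
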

  
  This reformulation provides a useful dictionnary between half-harmonic maps and a distinguished class of harmonic maps with free boundary. Given the tremendous progress on the analysis of integral equations in the last decades, such reformulation has been paramount in proving fine results for free boundary harmonic mappings. See also Jost \cite{JostFB} for additional estimates.
  \vspace{0.2cm}
  
  The proof of Theorem \ref{thFBIntro} is then  a straightforward application of the previous Proposition \ref{fracweakharm}, the equation derived in \cite{millotSire} (see also \cite{MazSch18}) for instance and our Theorem \ref{cor2}.

We would like to emphasize that using Caffarelli-Silvestre extension argument, as in \cite{MilPegSch21}, one can derive as well some consequences of our estimates in the regime $s\in (0,1)$ in terms of {\sl weighted} harmonic maps with free boundary, which have been investigated also in \cite{Millot-Sire-Yu} (see also e.g. \cite{armin1}).

We now turn to time-dependent problems. We recall the distributional flow associated to half-harmonic maps: Look for $u:\mathbb R^n\times (0,\infty)\to \mathbb S^{N-1}$ that solves
\begin{equation*}
\begin{cases}
(\partial_t u+(-\Delta)^{\frac12} u)(x,t) \perp T_{u(x,t)} \mathbb S^{N-1} & \ x\in \mathbb R^n,\ t>0,\\
u(x,t)=u_0(x) & \ x\in \mathbb R^n, \ t=0.
\end{cases}
\end{equation*}

    Recently, Struwe in \cite{plateau} building on works by Wettstein \cite{Wet22} uncovered that the gradient flow of half-harmonic maps is actually a reformulation of a natural flow associated to the Plateau problem. More precisely, the flow considered in \cite{plateau}: consider an harmonic function $\tilde u(t)\in H^1(B, \mathbb R^K)$ where $B$ is the unit ball in $\mathbb R^2$ such that its traces $u(t)$ on $\partial B = \mathbb S^1$ belong to $H^{1/2}(\mathbb S^1, \mathcal N)$ for all $t>0$ and where $\mathcal N$ is a closed manifold sitting in $\mathbb R^K$ solving 
    \begin{equation}\label{eq:plateau}
    d\pi_{\mathcal N}(u)(u_t+\partial_r u)=u_t+d\pi_{\mathcal N}(u)\partial_r u=0,\,\,\,\text{on}\,\,\, \mathbb S^1 \times [0,\infty) 
    \end{equation} 
    where $\partial_r u$ is the Neumann derivative on the circle, and $\pi_{\mathcal N}:\mathcal N_\rho \to \mathcal N$ is the smooth nearest-neighbor projection from a $\rho-$neighbourhood of $\mathcal N$ into $\mathcal N$. In the special case when $\mathcal N=\mathbb S^{N -1}$ is an Euclidean sphere, as noted in \cite{plateau}, the system \eqref{eq:plateau} is the harmonic extension of the flow 
    $$
    u_t + (-\Delta)^{1/2} u \perp T_{u(x)} \mathbb S^{N -1},
    $$ 
 
    which is as already mentioned our original system. As for the previous stationary case, our results give estimates for the Plateau flow of Struwe, whenever the target is the round sphere.

\section*{Acknowledgments}
K.K. is supported by the Deutsche Forschungsgemeinschaft (DFG, German Research Foundation) under Germany's Excellence Strategy - EXC-2047/1 - 390685813. S.N. is supported by the Deutsche Forschungsgemeinschaft (DFG, German Research Foundation) - SFB 1283/2 2021 - 317210226. Y.S. is partially supported by NSF DMS grant $2154219$, " Regularity {\sl vs} singularity formation in elliptic and parabolic equations".

\bibliographystyle{alpha}
\bibliography{gp-nonlocal} 

\end{document}